\theoremstyle{definition}
\newtheorem{theorem}{Theorem}
\newtheorem{prop}[theorem]{Proposition}
\newtheorem{cor}[theorem]{Corollary}
\newtheorem{definition}[theorem]{Definition}
\newtheorem{remark}[theorem]{Remark}
\newtheorem{lemma}[theorem]{Lemma}
\newtheorem{example}[theorem]{Example}
\newcommand{\CC}{\mathbb{C}}
\newcommand{\QQ}{\mathbb{Q}}
\newcommand{\ZZ}{\mathbb{Z}}
\newcommand{\ps}{\mathcal{P}}
\renewcommand{\AA}{\mathbb{A}}
\newcommand{\V}{\mathcal{V}}
\newcommand{\SymGp}{{{\mathfrak{S}_n}}}
\newcommand{\good}{good}
\newcommand{\bad}{bad}
\renewcommand{\geq}{\geqslant}
\renewcommand{\leq}{\leqslant}
\title{Degrees of regular sequences with a symmetric group action}
\author{Federico Galetto, Anthony V.~Geramita, David L.~Wehlau}
\date{\today}
\begin{document}

\begin{abstract}
  We consider ideals in a polynomial ring that are generated by
  regular sequences of homogeneous polynomials and are stable under
  the action of the symmetric group permuting the variables.  In
  previous work, we determined the possible isomorphism types for
  these ideals. Following up on that work, we now analyze the possible
  degrees of the elements in such regular sequences. For each case of
  our classification, we provide some criteria guaranteeing the
  existence of regular sequences in certain degrees.
\end{abstract}

\maketitle
\section{Introduction}
Consider the graded polynomial ring $R = \CC [x_1,x_2,\dots,x_n]$.  A
set of $n$ homogeneous polynomials $f_1,f_2,\dots,f_n$ is a maximal
regular sequence in $R$ if the only common zero of these $n$
polynomials is the point $(0,0,\dots,0)$.  A sequence
$g_1,g_2,\dots,g_t$ is a regular sequence in $R$ if it can be extended
to a maximal regular sequence in $R$.
  
We suppose that $G$ is a group acting linearly on $R$ via an action
which preserves the grading.  The subring
$R^G := \{f \in R : \forall \sigma \in G, \sigma\cdot f = f\}$ is
called the ring of invariants.  There has been some interest in
determining the degrees $(d_1,d_2,\dots,d_t)$ for which there exists a
regular sequence in $R^G$ with $\deg (f_i) = d_i$.  Dixmier
\cite{Dixmier} made a conjecture concerning this question for the
classical case of the action of $\text{SL}(2,\CC)$ on an irreducible
representation.  This conjecture has attracted some attention.  See
for example \cite{L-P,DD,BBS}.  Recently, a few authors have taken up
this question for the natural action of the symmetric group on $R$.
See \cite{CKW,Chen,K-M}.
   
We consider a more general question.  Our goal is to determine the
degrees of a maximal regular sequence $f_1,f_2,\dots,f_n$ in $R$ such
that the ideal $I:=(f_1,f_2,\dots,f_n)$ is stable under the group
action.  This is equivalent to the artinian quotient algebra $R/I$
inheriting the action of the group.
   
We will also restrict our attention to the natural action of the
symmetric group $\SymGp$ permuting the variables.  In our earlier
paper \cite{SCI1}, it is shown that there are four possible
representation types for the action of $\SymGp$ on $I$ (the notation
follows that of \cite{Sagan}):
\begin{enumerate}
\item the trivial representation $S^{(n)}$, given by all $f_i$ being
  symmetric polynomials;
\item the alternating representation $S^{(1^n)}$, given by one
  alternating polynomial, together with up to $n-1$ symmetric
  polynomials;
\item the standard representation $S^{(n-1,1)}$, possibly together
  with one symmetric polynomial;
\item the representation $S^{(2,2)}$, together with up to two
  symmetric polynomials (this only occurs when $n=4$).
\end{enumerate}
Our earlier paper shows examples of regular sequences corresponding to
all four cases, but does not address the question of how ``often''
such regular sequences can appear or, more precisely, in what degrees
they can be realized. Here we give explicit answers showing in which
degrees it is possible to find a regular sequence for each of the
above four representation types for $n \leq 4$.  We also derive a
number of results for general values of $n$.

Note also that our results relating to the first case above, actually
apply to the degrees of regular sequences of homogeneous polynomials
in the polynomial ring $\CC [y_1,\ldots,y_n]$, with the non-standard
grading given by $\deg (y_i)=i$. This case corresponds geometrically
to the homogeneous coordinate ring of a weighted projective space.

\section{Regular sequences of symmetric polynomials}
We consider the polynomial ring $R = \CC [x_1,x_2,\dots,x_n]$ in $n$
indeterminates equipped with the standard grading.  The symmetric
group $\SymGp$ acts naturally on $R$ by permuting the variables. It is
well known that the invariant subring $R^\SymGp$ can be identified
with the subalgebra $\CC [e_1,e_2,\dots,e_n]$ generated by the
elementary symmetric polynomials \cite{Goodman}. In particular,
$R^\SymGp$ is a polynomial ring equipped with the non-standard grading
$\deg (e_i) = i$.

\subsection{Degree sequences}
We are concerned with the degrees of elements of homogeneous regular sequences in $R^\SymGp$.

\begin{definition}
  Let $(d_1,d_2,\dots,d_n)$ be an (unordered) sequence of $n$ positive integers.  If there exists a homogeneous regular
  sequence $f_1,f_2,\dots,f_n \in R^\SymGp$ with $\deg(f_i)=d_i$ then we say that $(d_1,d_2,\dots,d_n)$ is a \em{regular degree sequence}.
\end{definition}

\begin{prop}\label{beta condition}
  Suppose $(d_1,d_2,\dots,d_n)$ is a regular degree sequence.  For
  $i=2,3,\dots,n$ we define
  $\beta_i := \#\{1\leqslant j \leqslant n : i \mid d_j\}$.  Then
  \begin{equation}
    \beta_i \geq \bigg\lfloor \frac n i \bigg\rfloor \quad \text{for all } i = 1,2,\dots n  \tag{\raisebox{-2pt}{*}}  
  \end{equation}
  In particular, $n! \mid \prod_{j=1}^n d_j$.
\end{prop}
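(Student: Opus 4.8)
The plan is to exploit the identification $R^{\SymGp} = \CC[e_1,e_2,\dots,e_n]$, a polynomial ring in $n$ variables with $\deg e_j = j$. Since $f_1,\dots,f_n$ is a regular sequence of homogeneous elements of positive degree in this $n$-dimensional Cohen--Macaulay ring, the quotient $A := R^{\SymGp}/(f_1,\dots,f_n)$ is artinian. The case $i=1$ of $(*)$ is trivial ($\beta_1 = n = \lfloor n/1\rfloor$), so fix $i \in \{2,\dots,n\}$ and set $m := \lfloor n/i\rfloor$. The idea is to pass to a well-chosen quotient ring in which $(*)$ becomes visible: namely, kill all the ``wrong-degree'' variables.

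Concretely, let $\pi\colon R^{\SymGp} \to S' := R^{\SymGp}/(e_\ell : 1\leq \ell \leq n,\ i \nmid \ell)$ be the quotient map. Then $S' \cong \CC[e_i, e_{2i}, \dots, e_{mi}]$ is a polynomial ring in exactly $m$ variables. Since $S'/(\pi f_1,\dots,\pi f_n)$ is a quotient of the artinian ring $A$, it is artinian, hence zero-dimensional; as $S'$ is a domain this forces $\height\big((\pi f_1,\dots,\pi f_n)\big) = m$. But this ideal is generated by the \emph{nonzero} elements among $\pi f_1,\dots,\pi f_n$, so by Krull's generalized principal ideal theorem at least $m$ of the $\pi f_j$ are nonzero. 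Finally, if $\pi f_j \neq 0$ then $f_j$ contains a monomial involving only the variables $e_i, e_{2i},\dots,e_{mi}$; every such monomial has degree divisible by $i$, and since $f_j$ is homogeneous of degree $d_j$ this gives $i \mid d_j$. Hence $\beta_i \geq \#\{j : \pi f_j \neq 0\} \geq m = \lfloor n/i\rfloor$, which is $(*)$.

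For the divisibility statement, fix a prime $p$ and let $e_p(\cdot)$ denote the exponent of $p$ in the prime factorization. Counting multiplicities,
\[
  e_p\Big(\prod_{j=1}^n d_j\Big) \;=\; \sum_{j=1}^n e_p(d_j) \;=\; \sum_{k\geq 1} \#\{j : p^k \mid d_j\} \;=\; \sum_{k\geq 1} \beta_{p^k} \;\geq\; \sum_{k\geq 1} \Big\lfloor \frac{n}{p^k} \Big\rfloor \;=\; e_p(n!),
\]
where for $p^k \leq n$ we invoke $(*)$ and for $p^k > n$ both sides of the inequality vanish. Since this holds for every prime $p$, we conclude $n! \mid \prod_{j=1}^n d_j$. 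The only genuinely non-routine step is the construction in the second paragraph: one must recognize that collapsing $R^{\SymGp}$ onto the subring generated by the $e_\ell$ of degree divisible by $i$ both preserves artinianness of the quotient and records precisely which $f_j$ can have degree divisible by $i$; the rest is bookkeeping, with the only subtlety being that the height bound must be applied to the nonzero images rather than to all $n$ of them.
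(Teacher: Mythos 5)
Your proof is correct, and it takes a genuinely different route from the paper's. The paper argues via Hilbert series: since $R^{\SymGp}$ is free over the polynomial subring $\CC[f_1,\dots,f_n]$, the quotient $\prod_{i=1}^n (1-t^{d_i})/(1-t^i)$ of Hilbert series is a polynomial with non-negative integer coefficients, and comparing the multiplicity of the cyclotomic factor $\Phi_i$ in numerator and denominator yields exactly $\beta_i \geq \lfloor n/i \rfloor$; the divisibility $n! \mid \prod_j d_j$ is then obtained by cancelling the factors of $1-t$ and evaluating at $t=1$, which moreover identifies $\left(\prod_j d_j\right)/n!$ as the rank of $R^{\SymGp}$ over $\CC[f_1,\dots,f_n]$. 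You instead collapse onto $S'=R^{\SymGp}/(e_\ell : i\nmid \ell)\cong \CC[e_i,e_{2i},\dots,e_{mi}]$, note that the image of $(f_1,\dots,f_n)$ still cuts out an artinian quotient and hence (being contained in the irrelevant ideal, as all $f_j$ have positive degree) has height $m=\lfloor n/i\rfloor$, and invoke Krull's height theorem to force at least $m$ of the images $\pi f_j$ to be nonzero; weighted homogeneity then gives $i\mid d_j$ for each such $j$. This is more elementary (no freeness of $R^{\SymGp}$ over $\CC[f_1,\dots,f_n]$ and no cyclotomic bookkeeping), and your Legendre-formula deduction shows the nice extra point that $(*)$ by itself already implies $n!\mid \prod_j d_j$, which the paper gets instead from the $t=1$ evaluation. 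What you give up is the stronger intermediate fact recorded by the paper and reused later, namely that $\prod_{i=1}^n(1-t^{d_i})/(1-t^i)$ is a non-negative integer polynomial, together with the rank interpretation of $\left(\prod_j d_j\right)/n!$. One tiny wording slip: for $p^k>n$ it is only $\lfloor n/p^k\rfloor$ that vanishes, not necessarily $\beta_{p^k}$, but the needed inequality is trivial there, so nothing is affected.
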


\begin{proof}
  If $ (d_1,d_2,\dots,d_n)$ is a regular degree sequence, then there
  exists a homogeneous regular sequence $f_1,f_2,\dots,f_n$ in
  $R^\SymGp$ with $\deg(f_i)=d_i$. The graded subring
  $A=\CC[f_1,f_2,\dots,f_n]$ is a polynomial ring and $R^\SymGp$ is a free
  $A$-module:
  $R^\SymGp \cong \oplus_{\gamma \in \Gamma}\,A \!\cdot\!\gamma$ for some
  set of homogeneous elements $\Gamma \subset R^\SymGp$ \cite[Lemma
  6.4.13]{Bruns}.  Thus the Hilbert series of $R^\SymGp$ and $A$ are related
  by
  $$\mathcal{H}(R^\SymGp,t) = \sum_{\gamma \in \Gamma} t^{\deg(\gamma)} \mathcal{H}(A,t).$$
  Since $\mathcal{H}(R^\SymGp,t) = \prod_{i=1}^n (1-t^i)^{-1}$ and
  $\mathcal{H}(A,t) = \prod_{i=1}^n (1-t^{d_i})^{-1}$, we see that
  $$\prod_{i=1}^n \frac{1-t^{d_i}}{1-t^i} = \sum_{\gamma\in\Gamma} t^{\deg(\gamma)}$$
  is a non-negative integer polynomial.

  Working over $\QQ$, all the irreducible factors of $(1-t^d)$ are
  cyclotomic polynomials.  Specifically
  $(1-t^d) = \prod_{i \mid d}\Phi_i(t)$, where $\Phi_i$ denotes the
  $i^\text{th}$ cyclotomic polynomial.  Since
  $\#\{1 \leq j \leq n : i \mid j \} = \lfloor n / i \rfloor$, we see
  that $\prod_{i=1}^n \frac{1-t^{d_i}}{1-t^i}$ is an integer
  polynomial if and only if
  $ \beta_i \geq \lfloor n / i \rfloor \quad \text{for all } i =
  1,2,\dots n$.
    
  To prove the final assertion, we cancel the factors of $(1-t)$ from
  the numerator and denominator.  Thus
    $$\prod_{i=1}^n \frac{1+t+t^2+\dots+t^{d_i}}{1+t+t^2+\dots + t^i} = \sum_{\gamma\in\Gamma} t^{\deg(\gamma)}\ .$$
    Evaluating at $t=1$ we see that
    $\left(\prod_{i=1}^n d_i\right)/n! = |\Gamma| =$ rank of
    $R^\SymGp$ as an $A$-module.
\end{proof}

\begin{remark}
  Our inequality (*) was first observed by Conca, Krattenthaler and
  Watanabe for regular sequences of power sums \cite[Lemma 2.6
  (2)]{CKW}. The three authors also showed that the product of the
  $d_i$ is divisible by $n!$ \cite[Lemma 2.8]{CKW}.  This seems to be the first time
  the restriction that  $$\prod_{i=1}^n \frac{1-t^{d_i}}{1-t^i} = \sum_{\gamma\in\Gamma} t^{\deg(\gamma)}$$
  is a non-negative integer polynomial has been observed.

\end{remark}

Suppose $(d_1,d_2,\dots,d_n)$ is a regular degree sequence.  Since
$\oplus_{d \leq i} R^\SymGp_d \subset \CC[e_1,e_2,\dots,e_i]$ and
hence cannot contain a regular sequence with more than $i$ terms, we
deduce that $(d_1,d_2,\dots,d_n)$ must also satisfy the following
condition
\begin{equation*}
  \#\{j :  d_j \leq i \} \leq i  \quad \text{for all } i = 1,2,\dots n\ .  \tag{\dag}
\end{equation*}

\begin{definition}
  Let $(d_1,d_2,\dots,d_n)$ be an (unordered) sequence of $n$ positive
  integers.  We say that $(d_1,d_2,\dots,d_n)$ is {\em permissible} if
  it satisfies the two conditions $(*)$ and $(\dag)$.  Thus every
  regular degree sequence is permissible.
\end{definition}

Note that if there exists a {\em matching}, i.e., a permutation
$\pi \in \SymGp$ such that $i$ divides $d_{\pi(i)}$ for all
$i=1,2,\dots,n$ then $(d_1,d_2,\dots,d_n)$ is a regular degree
sequence as is shown by the regular sequence of polynomials
$(e_i)^{d_{\pi(i)}/i}$ for $i = 1,2, \dots, n$.

\subsection{Regular degree sequences for $n \leq 4$}

\begin{theorem}
  \label{srs}
  \hspace{2em} %Without this LaTeX misses this reference
  \begin{enumerate}
  \item \label{n is 2} For $n=2$, a degree sequence is regular if and
    only if it is permissible if and only if it satisfies $(*)$.
  \item \label{n is 3} For $n=3$, a degree sequence is regular if and
    only if it is permissible.
  \item \label{n is 4} For $n=4$, every permissible degree sequence
    except $(1,2,5,12\delta), (2,2,5,12\delta)$ and $(5,2,5,12\delta)$
    is regular.
  \end{enumerate}
\end{theorem}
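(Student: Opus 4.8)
The plan is to treat the three cases of the theorem separately, since the obstacles are genuinely different. For $n=2$ and $n=3$ the claim is that permissibility is sufficient for regularity, and I would first reduce both statements to a finite case analysis by classifying permissible sequences up to the ``obvious'' moves. Concretely, for any regular degree sequence we may multiply any $d_i$ by a positive integer and stay regular (replacing $f_i$ by $f_i^k$ preserves the regular sequence property), so it suffices to realize a minimal set of permissible sequences and then close under this scaling operation. For $n=2$, condition $(*)$ says $2 \mid d_1$ or $2 \mid d_2$ (and that is condition $(\dag)$ too, essentially); up to swapping, $d_1$ is even, and then $e_1^{d_2}, e_2^{d_1/2}$ — wait, more carefully, I would use a matching argument when one exists and otherwise exhibit an explicit pair such as $e_1^{d_1}$ together with a suitable homogeneous polynomial of degree $d_2$ whose vanishing locus meets $\{e_1=0\}$ only at the origin; the genericity of such a choice is what makes it work. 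For $n=3$ I expect the same philosophy: reduce to finitely many ``small'' permissible sequences not covered by a matching, and for each exhibit a regular sequence explicitly, checking the regularity (i.e., that the quotient is artinian, equivalently that the $f_i$ have only the origin as common zero) by an explicit resultant or dimension computation.

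For $n=4$, the hard part, I would again start from the matching criterion: any permissible sequence admitting a permutation $\pi$ with $i \mid d_{\pi(i)}$ is automatically regular via the powers of elementary symmetric polynomials. So the real content is the permissible sequences with \emph{no} matching. I would enumerate these. Condition $(*)$ for $n=4$ forces, for $i=2$, at least two of the $d_j$ even; for $i=3$, at least one divisible by $3$; for $i=4$, at least one divisible by $4$. Using the scaling reduction (divide out common factors appropriately) I would argue the obstructed-matching permissible sequences form a short explicit list, parametrized by a multiplier $\delta$ (this is where the $12\delta$ in the statement comes from: $12 = \mathrm{lcm}(1,2,3,4)$ suitably adjusted), and that apart from the three exceptional families $(1,2,5,12\delta)$, $(2,2,5,12\delta)$, $(5,2,5,12\delta)$ each admits an explicit regular sequence — typically something like $p_1, e_2^{a}, \dots$ mixing power sums and elementary symmetric functions, or a generic symmetric polynomial of the required degree — whose common zero locus I verify is just the origin.

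The two things that need real work are: (i) showing the three listed sequences are genuinely \emph{not} regular — this cannot follow from $(*)$ or $(\dag)$ since by hypothesis they are permissible, so I would need a finer obstruction, presumably computing the putative quotient $\prod_{i=1}^4 \frac{1-t^{d_i}}{1-t^i}$ and showing that although it is a nonnegative integer polynomial, no actual regular sequence in those degrees exists inside $\CC[e_1,e_2,e_3,e_4]$, perhaps by a direct argument that any four symmetric polynomials of degrees $1,2,5,12\delta$ must have a common positive-dimensional zero locus (the degree-$5$ and degree-$12\delta$ elements lie in a subring where, after using the linear and quadratic forms, not enough independent equations remain); and (ii) the case-by-case construction of regular sequences for the finitely many non-matching permissible sequences that \emph{are} regular. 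I expect (i) to be the main obstacle, since ruling out all symmetric regular sequences in a fixed degree tuple requires understanding the geometry of $\{e_1 = e_2 = 0\} \subset \PP^3$ and why polynomials of degree $5$ and $12\delta$ on it cannot cut it down to a point; the cleanest route is probably to exhibit an explicit curve in that locus on which every symmetric polynomial of degree $5$ (modulo $e_1,e_2$) vanishes, forcing a common zero.
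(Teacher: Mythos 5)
There is a genuine gap — in fact two. First, your reduction to ``a finite case analysis by classifying permissible sequences up to scaling'' does not work. Already for $n=3$ the matchingless permissible sequences are the triples $(d_1,d_2,6\gamma)$ with $d_1,d_2$ odd and prime to $3$, $d_2\geq 5$; infinitely many of these (for instance $(1,p,6)$ with $p\geq 5$ prime) are not powers-of-entries of any smaller regular sequence, so closing a finite list under the operation $f_i\mapsto f_i^k$ never reaches them. What is needed, and what the paper supplies, are \emph{parametric} constructions valid for all degrees in each family, e.g.\ $e_1^{d_1},\,e_3e_2^{(d_2-3)/2},\,(e_2^3+e_3^2)^{d_3/6}$ for $n=3$, and for $n=4$ families such as $e_1^{d_1},\,e_2^pe_3^q,\,e_4^{\beta}$ or $(e_2^3+e_3^2)e_4^{\beta-1},\,(e_2^6+e_3^4+e_4^3)^{\delta}$, together with a relabelling argument splitting into the cases $6\mid d_3$, $d_3\geq 4$ even, and $d_3=2$ (the last requiring $d_2\geq 7$, which itself uses the non-regularity of the exceptional sequences). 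Your fallback of ``a generic symmetric polynomial of the required degree'' cannot replace these: in degrees $1$, $2$ and $5$ \emph{every} symmetric polynomial lies in $(e_1,e_2)$, so genericity buys nothing there, and in the other degrees proving that a generic choice works amounts to exhibiting one explicit sequence, which is exactly the work left undone.

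Second, the non-regularity of $(1,2,5,12\delta)$, $(2,2,5,12\delta)$, $(5,2,5,12\delta)$ is left at the level of ``presumably/perhaps''. The missing ingredient is a one-line arithmetic fact: since $5\notin\langle 3,4\rangle$, every monomial in $e_1,e_2,e_3,e_4$ of weighted degree $1$, $2$ or $5$ is divisible by $e_1$ or $e_2$, i.e.\ $R^{\SymGp}_1\oplus R^{\SymGp}_2\oplus R^{\SymGp}_5\subseteq(e_1,e_2)$. Consequently, in any of the three exceptional degree tuples, three of the four polynomials would lie in the height-two ideal $(e_1,e_2)$; their common zero locus contains the surface $V(e_1,e_2)\subset\AA^4$, and intersecting with the fourth polynomial still leaves a positive-dimensional set, so no such maximal regular sequence exists — equivalently, a regular degree sequence for $n=4$ can contain at most two entries from $\{1,2,5\}$. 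Your proposed ``explicit curve on which every symmetric quintic vanishes'' is a restatement of this conclusion, but you never identify the containment $R^{\SymGp}_5\subseteq(e_1,e_2)$ that makes it true, and without it the claim is unproven. (Your $n=2$ case, by contrast, is fine and matches the paper: condition $(*)$ forces one even degree, hence a matching, so the ``otherwise'' branch you worry about is empty.)
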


\begin{proof}
  (\ref{n is 2}) If $(d_1,d_2)$ satisfies $(*)$ then at least one of
  $d_1$ or $d_2$ is even and so we have a matching.

  (\ref{n is 3}) Let $n=3$ and suppose that $(d_1,d_2,d_3)$ is
  permissible but has no matching.  Then, without loss of generality,
  6 divides $d_3$ while $d_1$ and $d_2$ are both odd numbers not
  divisible by 3 with $d_2 \geq d_1$.  Now condition $(\dag)$ implies
  that $d_2 \geq 2$ and thus $d_2 \geq 5$.  Thus we have a regular
  sequence $e_1^{d_1}, e_3 e_2^{(d_2-3)/2}, (e_2^3 + e_3^2)^{d_3/6}$
  with degrees $(d_1,d_2,d_3)$.

  (\ref{n is 4}) Let $n=4$ and suppose that $(d_1,d_2,d_3,d_4)$ is
  permissible but has no matching.  Note that
  $R^\SymGp_1 \oplus R^\SymGp_2 \oplus R^\SymGp_5$ is contained in the
  ideal generated by $e_1$ and $e_2$.  This implies that a regular
  degree sequence must satisfy $\#\{i : d_i \in \{1,2,5\} \} \leq 2$.
  This shows that the three permissible degree sequences
  $(1,5,2,12\delta), (2,5,2,12\delta), (5,5,2,12\delta)$ are not
  regular.

  Condition $(*)$ implies that two of the $d_i$ are even, one is
  divisible by 3 and one is divisible by 4.  Without loss of
  generality, $d_3$ and $d_4$ are both even, $d_4 = 4\delta$ is
  divisible by 4 and $d_2 \geq d_1$.  Since there is no matching,
  neither $d_1$ nor $d_2$ is divisible by 3.  Thus either $d_3$ is a
  multiple of 6 or $d_4$ is a multiple of 12.

  First we consider the case where $d_3 = 6 \beta$ is a multiple of 6.
  Since there is no matching both $d_1$ and $d_2$ are odd integers not
  divisible by 3.  Thus $(\dag)$ implies that $d_2 \geq 5$.  Therefore
  $e_1^{d_1}, e_3 e_2^{(d_2-3)/2}, (e_2^3 + e_3^2)^{\beta},
  e_4^{\delta}$ is a regular sequence of the required degrees.

  Thus we may suppose that $d_4 = 12 \delta$ is a multiple of 12.  Now
  we adjust our labelling as follows.  We suppose that $d_3$ is the
  largest of those elements of $\{d_1,d_2,d_3\}$ which are even.
  Further we assume that $d_2 \geq d_1$.  Furthermore, since there is
  no matching, 3 divides neither $d_1$ nor $d_2$.
      
  Since $d_2 \geq 2$ we may write $d_2 = 2p + 3q$ where $p$ and $q$
  are non-negative integers.  Suppose first that $d_3 \geq 4$ and
  define
  $$f :=
  \begin{cases}
    e_4^{d_3/4}, & \mbox{if } d_3 \equiv 0 \pmod 4;\\
    e_4^{(d_3-6)/4}(e_2^3+ e_3^2), & \mbox{if } d_3 \equiv 2 \pmod 4.
  \end{cases}
  $$
  Then $e_1^{d_1}, e_2^p e_3^q, f, (e_2^6 + e_3^4 + e_4^3)^{\delta}$
  is a regular sequence of degrees $(d_1,d_2,d_3,d_4)$.
     
  Finally we suppose that $d_3 = 2$.  Then either $d_2=2$ or $d_2$ is
  odd.  But we have seen that $(1,2,2,12\delta)$ and
  $(2,2,2,12\delta)$ are not regular degree sequences and thus $d_2$
  must be odd.  Since 3 does not divide $d_2$, we have $d_2\geq 5$.
  If $d_2 = 5$ then $d_1 \in \{1,2,5\}$ which is again not possible
  since $(1,5,2,12\delta)$, $(2,5,2,12\delta)$ and $(5,5,2,12\delta)$
  are not regular degree sequences.  Therefore $d_2 \geq 7$ if
  $d_3=2$.  Thus we may write $d_2 =3 p + 4q$.  Then
  $e_1^{d_1}, e_3^p e_4^q, e_2, (e_3^4+e_4^3)^{\delta}$ is a regular
  sequence of the required degrees.
\end{proof}

Table \ref{tab:symmetric} summarizes the regular sequences we have
found when $n=4$.
The first row corresponds to a matching.

\begin{table}[htb]
  \centering
  $\begin{array}{|l|l|l|l|l|l|l|l|}
     \hline
     \multicolumn{4}{|c|}{\rm \bf Degrees} & \multicolumn{4}{|c|}{\rm\bf Symmetric\ Polynomials}\\
     \hline
     \multicolumn{1}{|c|}{d_1} & \multicolumn{1}{|c|}{d_2} & \multicolumn{1}{|c|}{d_3} & \multicolumn{1}{|c|}{d_4} 
                                                                                       & \multicolumn{1}{|c|}{f_1} & \multicolumn{1}{|c|}{f_2} & \multicolumn{1}{|c|}{f_3} & \multicolumn{1}{|c|}{f_4}\\ \hline
     d_1&  3\beta & 2\gamma & 4\delta & e_1^{d_1} & e_3^{\beta} & e_2^{\gamma} & e_4^{\delta}\\      %matching 
     d_1 & d_2\geq 5  &6\gamma &4\delta& e_1^{d_1} & e_3 e_2^{(d_2-3)/2} & (e_2^3 + e_3^2)^{\gamma}& e_4^{\delta}\\
     d_1 & d_2 \geq 2 & 4\beta  & 12\delta & e_1^{d_1} & e_2^p e_3^q & e_4^\beta & (e_2^6+e_3^4+e_4^3)^{\delta}\\
     d_1 & d_2 \geq 2 & 4\beta+2 \geq 6 & 12\delta & e_1^{d_1} & e_2^p e_3^q & (e_2^3+e_3^2)e_4^{\beta-1} & (e_2^6+e_3^4+e_4^3)^{\delta}\\
     d_1 & d_2 \geq 7 & 2 & 12\delta & e_1^{d_1} & e_3^p e_4^q & e_2 & (e_3^4+e_4^3)^{\delta}\\ 
     \hline
   \end{array}$
   \bigskip
   \caption{Regular sequences of symmetric polynomials for $n=4$}\label{tab:symmetric}
\end{table}

Note that we have in fact proved the following result.
\begin{cor}\label{improved cor}
  Suppose that $d_2,d_3,d_4$ are three positive integers such that
  $4\mid d_4$, $d_3$ is even, and $3\mid d_2 d_3 d_4$.  Then there
  exist three symmetric polynomials $f_2,f_3,f_4$ (as given in
  Table~1) of degrees $d_2,d_3,d_4$ respectively such that
  $e_1,f_2,f_3,f_4$ is a regular sequence.
\end{cor}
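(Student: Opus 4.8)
The plan is to extract the statement from (the proof of) Theorem~\ref{srs}(\ref{n is 4}) together with the matching construction preceding that theorem, specialized to the case $d_1=1$. I would begin by observing that once $d_1=1$, every row of Table~\ref{tab:symmetric} exhibits a sequence of the shape $e_1^{d_1}=e_1,\ f_2,\ f_3,\ f_4$ in which $f_2,f_3,f_4$ are homogeneous polynomials in $e_2,e_3,e_4$ alone, of the prescribed degrees. Since $e_1$ is a nonzerodivisor on $R^{\SymGp}=\CC[e_1,e_2,e_3,e_4]$, such a sequence is regular exactly when $f_2,f_3,f_4$ is a regular sequence in $R^{\SymGp}/(e_1)\cong\CC[e_2,e_3,e_4]$; and since the latter is a three-dimensional polynomial ring, that in turn holds exactly when $f_2,f_3,f_4$ have the origin as their only common zero in $\CC^3$. (This also matches the notion of a regular sequence in $R$ from the introduction: the coordinate map $\CC^4\to\CC^4$, $(x_1,\dots,x_4)\mapsto(e_1,\dots,e_4)$, is finite and $e_i=0$ for all $i$ forces $x_j=0$ for all $j$, so $e_1,f_2,f_3,f_4$ has only the origin as common zero in the $x$-space iff $f_2,f_3,f_4$ has only the origin as common zero in $(e_2,e_3,e_4)$-space.) For each row of Table~\ref{tab:symmetric} the required vanishing is the short computation already performed in the proof of Theorem~\ref{srs}(\ref{n is 4}): each of $e_2^3+e_3^2$, $e_3^4+e_4^3$ and $e_2^6+e_3^4+e_4^3$ vanishes, together with a suitable pair of monomials in $e_2,e_3,e_4$, only at the origin.

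It then remains to verify that the hypotheses ``$4\mid d_4$, $d_3$ even, $3\mid d_2d_3d_4$'' always allow the unordered data $\{d_2,d_3,d_4\}$ to be matched (after relabeling) to the degree pattern of some row of Table~\ref{tab:symmetric}, using that $4\mid d_4$ forces $d_4\geq 4$ and $d_3$ even forces $d_3\geq 2$. This is a finite case analysis on which of the three degrees carries the factor $3$: if that factor sits on a degree $\geq 4$ which can serve as a ``$3\beta$-'' or ``$6\gamma$-entry'', I would invoke Row~1 or Row~2; otherwise $3\mid d_4$, hence $12\mid d_4$, and I would use Row~3, Row~4 or Row~5 according as the even degree placed in the third slot is $\equiv 0\pmod 4$, is $\equiv 2\pmod 4$ and $\geq 6$, or equals $2$.

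I expect the main obstacle to be this last, small-degree, boundary. When the only even value available for the third slot is $2$, Rows~3 and~4 do not apply, and Row~5 needs $d_2=3p+4q$ with $p,q\geq 0$, i.e. $d_2\notin\{1,2,5\}$; for $d_2\in\{3,4,6\}$ one instead produces a matching (Row~1). This is precisely the configuration in the proof of Theorem~\ref{srs}(\ref{n is 4}) where the excluded sequences $(1,2,5,12\delta)$ arise, so it is here that the argument must be handled with care and in accordance with those exclusions; outside this range the required regular sequence is read off directly from Table~\ref{tab:symmetric}.
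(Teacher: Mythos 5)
Your first paragraph is essentially the paper's entire proof: Corollary~\ref{improved cor} is offered as a restatement of what Table~\ref{tab:symmetric} exhibits, namely that in every row $f_1=e_1^{d_1}$ while $f_2,f_3,f_4$ lie in $\CC[e_2,e_3,e_4]$, so one may simply take $d_1=1$; your justification via $R^{\SymGp}/(e_1)\cong\CC[e_2,e_3,e_4]$ and the only-common-zero-at-the-origin criterion is a perfectly sound way of phrasing the regularity checks already carried out in the proof of Theorem~\ref{srs}(\ref{n is 4}).

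The gap is in your second stage, and you put your finger on it without resolving it. The case analysis ``the hypotheses always allow $\{d_2,d_3,d_4\}$ to be matched to a row'' cannot be completed, because the literal hypotheses admit triples for which the conclusion is false. If $d_2=1$ (for instance $(d_2,d_3,d_4)=(1,6,4)$, which satisfies $4\mid d_4$, $d_3$ even, $3\mid d_2d_3d_4$), then every symmetric polynomial of degree $1$ is a scalar multiple of $e_1$, so $e_1,f_2,f_3,f_4$ can never be regular; note this failure also occurs with even slots larger than $2$, not only at the boundary you flag. If $(d_2,d_3,d_4)=(5,2,12\delta)$, the degree sequence $(1,2,5,12\delta)$ is one of the sequences proved non-regular in Theorem~\ref{srs}(\ref{n is 4}); if $(d_2,d_3,d_4)=(2,2,12\delta)$, the sequence $(1,2,2,12\delta)$ violates condition $(\dag)$. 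So ``handled with care and in accordance with those exclusions'' cannot be turned into a proof of the statement as quoted: the displayed divisibility conditions are necessary features of the degree patterns in Table~\ref{tab:symmetric}, not a sufficient hypothesis for matching a row. The paper's one-line proof implicitly reads the corollary as asserting exactly what the table's rows provide, and under that reading your first paragraph already finishes the argument; under the literal reading one must add $d_2\geq 2$ and exclude $(d_2,d_3)=(2,2)$ and $(5,2)$ (with $12\mid d_4$), after which your matching argument does go through.
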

  
\begin{remark}
  Note that the degree sequence $(2,5,2,12)$ (which is not regular)
  has the property that
  $$ \frac{(1-t^2)(1-t^5)(1-t^2)(1-t^{12})}{(1-t)(1-t^2)(1-t^3)(1-t^4)} = 1 + t + t^3 + 2t^4 + 2t^7 +t^8 + t^{10} + t^{11}$$
  is a non-negative integer polynomial.
\end{remark}

For larger values of $n$ little is known. The following statement was
proved in \cite[Prop.~2.9]{CKW} using sequences of power sums and
homogeneous symmetric polynomials.

\begin{prop}\label{consecutive}
  For every positive integer $a$ the sequence of consecutive degrees
  $(a, a+1, a+2, \dots, a+n-1)$ is a regular degree sequence.
\end{prop}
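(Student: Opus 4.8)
The plan is to write down an explicit regular sequence rather than to verify a combinatorial criterion, since the consecutive sequence $(a,a+1,\dots,a+n-1)$ need not admit a matching (for instance $(5,6,7)$ admits none), so the matching trick described above is unavailable here. The natural candidate is the sequence of power sums of consecutive degrees. Writing $p_k := x_1^k + x_2^k + \dots + x_n^k$, a symmetric polynomial homogeneous of degree $k$, I would set $f_i := p_{a+i-1}$ for $i = 1,2,\dots,n$; these have the prescribed degrees $a,a+1,\dots,a+n-1$, and the whole problem reduces to showing that $p_a,p_{a+1},\dots,p_{a+n-1}$ is a regular sequence in $R^\SymGp$.

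For that reduction I would invoke the standard fact that, $R^\SymGp = \CC[e_1,\dots,e_n]$ being a polynomial ring, a set of $n$ homogeneous elements forms a regular sequence exactly when the quotient algebra is finite-dimensional over $\CC$, equivalently when the common zero locus of those elements in $\CC^n$ (coordinates $e_1,\dots,e_n$) is the single point $0$. Because the morphism $\CC^n \to \CC^n$, $(x_1,\dots,x_n) \mapsto (e_1,\dots,e_n)$, is surjective and maps only $0$ to $0$, this is in turn equivalent to asking that $p_a,\dots,p_{a+n-1}$, regarded as polynomials in the original variables $x_1,\dots,x_n$, have no common zero besides the origin. So the goal becomes: if $\sum_{i=1}^n x_i^{\,a+j} = 0$ for $j = 0,1,\dots,n-1$, then $x_1 = \dots = x_n = 0$.

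The crux is this implication, which I would settle with a Vandermonde argument. Suppose $(x_1,\dots,x_n) \neq 0$ is a common zero, and let $y_1,\dots,y_m$ be the distinct nonzero values occurring among $x_1,\dots,x_n$, with multiplicities $c_1,\dots,c_m \geq 1$; then $1 \leq m \leq n$. Since $a \geq 1$, the vanishing of the chosen power sums gives $\sum_{l=1}^m c_l\,y_l^{\,a}\,y_l^{\,j} = 0$ for all $j = 0,1,\dots,n-1$; restricting to $j = 0,1,\dots,m-1$ produces a homogeneous linear system in the $m$ quantities $c_l y_l^{\,a}$ whose coefficient matrix $\big(y_l^{\,j}\big)_{1 \leq l \leq m,\ 0 \leq j \leq m-1}$ is a Vandermonde matrix in the distinct nodes $y_1,\dots,y_m$, hence invertible. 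It follows that $c_l y_l^{\,a} = 0$ for every $l$, contradicting $c_l \geq 1$ and $y_l \neq 0$. This establishes the implication, so $(a,a+1,\dots,a+n-1)$ is a regular degree sequence.

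I do not expect a genuine obstacle: the only step with real content is the Vandermonde computation. The routine points to handle carefully are the bookkeeping with multiplicities (so that the restricted linear system really has size $m \leq n$) and the precise statement of the equivalence between regular sequences of homogeneous elements in $R^\SymGp$ and common zero loci in affine space. This reproves \cite[Prop.~2.9]{CKW} by a pure power-sum argument.
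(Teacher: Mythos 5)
Your argument is correct, and it is more self-contained than what the paper does: the paper gives no proof of Proposition \ref{consecutive}, but simply quotes it from \cite[Prop.~2.9]{CKW}, remarking that the proof there uses sequences of power sums and homogeneous symmetric polynomials. You instead exhibit the single explicit candidate $\ps_a,\ps_{a+1},\dots,\ps_{a+n-1}$ and verify regularity directly. Your two reductions are standard and correctly handled: $n$ homogeneous elements of positive degree in the weighted polynomial ring $R^\SymGp=\CC[e_1,\dots,e_n]$ form a regular sequence exactly when their common zero locus is the origin (Cohen--Macaulayness plus the Nullstellensatz), and since $(x_1,\dots,x_n)\mapsto(e_1(x),\dots,e_n(x))$ is surjective with $0$ as the only preimage of $0$, this is the same as the power sums having no common zero in the $x$-variables other than the origin. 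The Vandermonde step, which is the real content, is airtight: after discarding zero coordinates (harmless since every exponent $a+j$ is at least $1$) and grouping equal coordinates, the equations for $j=0,\dots,m-1$ read $\sum_{l=1}^m (c_l y_l^{\,a})\, y_l^{\,j}=0$ with $y_1,\dots,y_m$ distinct and nonzero, so invertibility of the Vandermonde matrix forces $c_l y_l^{\,a}=0$, impossible because the multiplicities $c_l$ are positive integers and we are in characteristic zero. What your route buys is a short, purely power-sum proof readable independently of \cite{CKW}; what the citation buys the paper is brevity and the additional information in \cite{CKW} (for instance the analogous statement for complete homogeneous symmetric polynomials). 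A minor simplification of your middle step: since the $\ps_{a+j}$ are symmetric and have no common zero besides the origin, they already form a maximal regular sequence in $R$ in the sense of the paper's introduction, and regularity in $R^\SymGp$ follows because $R$ is free, hence faithfully flat, over $R^\SymGp$; your version via the $e_i$-coordinates is equally valid.
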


\subsection{Regular sequences with an alternating polynomial}
\label{sec:alt-rep}

A polynomial $f\in R$ is said to be \emph{alternating} if, for all
$\sigma \in \SymGp$, $\sigma f = \pm f$, depending on the sign of
$\sigma$.  As an example, the Vandermonde determinant
$$\Delta := \det 
\begin{bmatrix}
  1 &x_1 &x_1^2 &\dots &x_1^{n-1}\\
  1 &x_2 &x_2^2 &\dots &x_2^{n-1}\\
  \vdots &\vdots &\vdots & &\vdots\\
  1 &x_n &x_n^2 &\dots &x_n^{n-1}
\end{bmatrix} = \prod_{1\leq i<j \leq n} (x_j-x_i) \in R$$ is clearly
alternating. In fact, every homogeneous alternating polynomial in $R$
is divisible by $\Delta$, the quotient being a homogeneous symmetric
polynomial.

As noted in \cite[Prop.~2.5]{SCI1}, there exist homogeneous regular
sequences $f_1,f_2,\dots,f_t,g\Delta$ in $R$ with $f_1,f_2,\dots,f_t$
and $g$ symmetric polynomials. These sequences are closely related to
sequences of symmetric polynomials.

\begin{lemma}\label{lem:reg_seq_factors}
  Let $f_1,f_2,\dots,f_t,g,h\in R$ be homogeneous polynomials. Then
  the sequence $f_1,f_2,\dots,f_t,g h$ is regular if and only if both
  $f_1,f_2,\dots,f_t,g$ and $f_1,f_2,\dots,f_t,h$ are regular.
\end{lemma}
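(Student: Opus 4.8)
The plan is to recast the notion of regular sequence in terms of associated primes and then exploit the fact that a product lies in a prime ideal exactly when one of its factors does. Throughout I take all the polynomials to be homogeneous of positive degree, as is implicit in the paper, so that the ideals involved are proper. Recall that $R$ is Cohen--Macaulay, so that for such polynomials the paper's notion of regular sequence (namely, extending to a homogeneous system of parameters) coincides with the usual commutative-algebra notion; in particular the property is unchanged by reordering the terms and is inherited by every subsequence \cite{Bruns}.

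First I would dispose of a degenerate case. If $f_1,\dots,f_t$ is itself not a regular sequence, then none of $f_1,\dots,f_t,gh$, $f_1,\dots,f_t,g$, $f_1,\dots,f_t,h$ is regular, since each has $f_1,\dots,f_t$ as a subsequence and a subsequence of a regular sequence is regular; thus both sides of the claimed equivalence are false and there is nothing to prove. Hence I may assume $f_1,\dots,f_t$ is a regular sequence, and I set $J := (f_1,\dots,f_t)$.

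The heart of the argument is the standard fact that, for a homogeneous polynomial $p$ of positive degree, the sequence $f_1,\dots,f_t,p$ is regular if and only if $p$ is a nonzerodivisor on $R/J$, i.e.\ if and only if $p \notin \mathfrak p$ for every $\mathfrak p \in \Ass(R/J)$. I would apply this three times. Taking $p=gh$ and using that each $\mathfrak p\in\Ass(R/J)$ is prime, so that $gh\in\mathfrak p$ if and only if $g\in\mathfrak p$ or $h\in\mathfrak p$, we get the chain of equivalences: $f_1,\dots,f_t,gh$ is regular $\iff$ for every $\mathfrak p\in\Ass(R/J)$ both $g\notin\mathfrak p$ and $h\notin\mathfrak p$ $\iff$ ($g\notin\mathfrak p$ for all $\mathfrak p\in\Ass(R/J)$) and ($h\notin\mathfrak p$ for all $\mathfrak p\in\Ass(R/J)$) $\iff$ $f_1,\dots,f_t,g$ is regular and $f_1,\dots,f_t,h$ is regular. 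This is exactly the assertion.

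I do not expect any genuine obstacle. The only points needing care are the reduction to the case in which $f_1,\dots,f_t$ is already regular, and the identification of the paper's definition of regular sequence with the nonzerodivisor criterion together with the observation $\Ass(R/J)=\{\,\text{minimal primes of }J\,\}$ — this is where the Cohen--Macaulayness of $R$ enters, and it is what makes the argument clean. (One could instead phrase everything geometrically: $f_1,\dots,f_t,p$ is regular iff $p$ vanishes on no irreducible component of $V(f_1,\dots,f_t)$, and these components all have dimension $n-t$; the same primality step then finishes the proof.)
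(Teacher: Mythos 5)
Your proof is correct and follows essentially the same route as the paper: the paper reduces to the case where $f_1,\dots,f_t$ is regular and observes that $gh$ is a nonzerodivisor modulo $(f_1,\dots,f_t)$ if and only if both $g$ and $h$ are, which is exactly the associated-primes/primality argument you spell out in more detail.
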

\begin{proof}
  Suppose $f_1,f_2,\dots,f_t$ form a regular sequence. Then $g h$ is
  not a zero-divisor modulo $(f_1,f_2,\dots,f_t)$ if and only if both
  $g$ and $h$ are not zero-divisors modulo $(f_1,f_2,\dots,f_t)$.
\end{proof}

The following is an immediate consequence of Lemma
\ref{lem:reg_seq_factors}.

\begin{prop}\label{reg_seq_alt}
  Let $f_1,f_2,\dots,f_t,g\in R$ be homogeneous symmetric polynomials. The
  sequence $f_1,f_2,\dots,f_t,g\Delta$ is regular if and only if both
  $f_1,f_2,\dots,f_t,g$ and $f_1,f_2,\dots,f_t,\Delta$ are regular.
\end{prop}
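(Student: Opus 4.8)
The plan is to obtain this as a direct specialization of Lemma~\ref{lem:reg_seq_factors}, taking $h := \Delta$. The first thing I would check is that the hypotheses of that lemma are met in this setting: $\Delta$ is a homogeneous polynomial (of degree $\binom{n}{2}$) and $g$ is homogeneous by assumption, so $f_1,f_2,\dots,f_t,g,\Delta$ is a list of homogeneous polynomials to which the lemma applies verbatim. Reading off its conclusion with $h=\Delta$ then gives exactly the desired equivalence: $f_1,\dots,f_t,g\Delta$ is regular if and only if both $f_1,\dots,f_t,g$ and $f_1,\dots,f_t,\Delta$ are regular. Note that there is no loss in the fact that here we additionally assume the $f_i$ and $g$ to be symmetric; the lemma makes no such hypothesis, so it covers this case for free, and the product $g\Delta$ is automatically (homogeneous and) alternating, which is why this proposition is the natural tool for analyzing case~(2) of the classification.

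The real content therefore lies in Lemma~\ref{lem:reg_seq_factors}, which one can take as given here; informally, once $f_1,\dots,f_t$ is a regular sequence, the zerodivisors modulo $(f_1,\dots,f_t)$ are exactly the elements lying in one of the (finitely many) associated primes of $R/(f_1,\dots,f_t)$, and a product $g\Delta$ lies in such a prime if and only if $g$ or $\Delta$ does — which yields both implications at once. The degenerate possibilities cause no trouble: if $g=0$ then $g\Delta=0$ and both sides of the equivalence fail, while $\Delta$ is never zero.

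In short, I expect no genuine obstacle: the proposition is a one-line consequence of the lemma. The point of recording it separately is structural rather than technical — because every homogeneous alternating polynomial is of the form $g\Delta$ with $g$ symmetric, it reduces the study of regular sequences in the alternating-representation case to the study of regular sequences of symmetric polynomials (treated in the previous subsection) together with the single extra requirement that $f_1,\dots,f_t,\Delta$ be regular, which can then be addressed on its own.
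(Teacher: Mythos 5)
Your proposal is correct and matches the paper exactly: the paper also states this proposition as an immediate consequence of Lemma~\ref{lem:reg_seq_factors}, applied with $h=\Delta$, with no additional argument needed. Your observations that the symmetry of the $f_i$ and $g$ plays no role in the lemma and that the content lies in the zero-divisor characterization are precisely in the spirit of the paper's treatment.
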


Proposition \ref{reg_seq_alt} allows to rule out existence of regular
sequences of certain degrees that contain an alternating polynomial.

\begin{example}
  For $n=4$, $\Delta$ has degree 6. By Theorem \ref{srs}
  (\ref{n is 4}), there is no regular sequence of homogeneous
  symmetric polynomials $f_1,f_2,f_3,g$ of degrees
  $1,2,5,12\delta$. Therefore, Proposition \ref{reg_seq_alt} implies
  there is no regular sequence $f_1,f_2,f_3,g\Delta$ of degrees
  $1,2,5,12\delta + 6$.
\end{example}

\begin{remark}
  The polynomial $\Delta^{2k}$ is symmetric for all positive integers
  $k$. Moreover, the sequence $f_1,f_2,\dots,f_t,\Delta$ is regular if
  and only if $f_1,f_2,\dots,f_t,\Delta^{2k}$ is regular
  (cf. \cite[Cor.~17.8 a]{Eisenbud}). As a consequence, we can exclude
  the existence of regular sequences in certain degrees. For example,
  there is no regular sequence of homogeneous polynomials
  $f_1,f_2,f_3,\Delta$ with $f_1,f_2,f_3$ symmetric of degrees $1,2,5$
  because $f_1,f_2,f_3,\Delta^2$ would violate Theorem
  \ref{srs} (\ref{n is 4}).
\end{remark}

\section{Regular sequences and the standard representation}
\label{sec:std-rep}

We begin this section by recalling some basic facts about the
representation theory of the symmetric group $\SymGp$ over a field of
characteristic zero. We refer the reader to \cite[Ch.~2]{Sagan} for
the details.

We write $\lambda \vdash a$ to denote that
$\lambda=(\lambda_1,\lambda_2,\dots,\lambda_r)$ is a partition of the
integer $a$, %with no part exceeding $n$,
i.e., that $\lambda_1 + \lambda_2 + \dots + \lambda_r=a$ and
$\lambda_1 \geq \lambda_2 \geq \dots \geq \lambda_r >
0$. % and $\lambda_t \leq n$ for all $t$.
The irreducible representations of $\SymGp$ are in bijection with the
partitions of $n$; for $\lambda \vdash n$, we denote by $S^\lambda$
the corresponding irreducible.  Every finite dimensional
representation of $\SymGp$ decomposes into a direct sum of copies of
the $S^\lambda$.

The irreducible representation $S^{(n-1,1)}$ of $\SymGp$ is often
called the \emph{standard representation}. It can be described as the
$\SymGp$-stable complement of the subspace spanned by $e_1$ inside the
representation $R_1 = \langle x_1,x_2,\dots,x_n \rangle$. The
polynomials $x_1-x_n,x_2-x_n,\dots,x_{n-1}-x_n$ give an explicit basis
of the complement.

Let $\mathfrak{m} = (x_1,x_2,\dots,x_n)$ be the irrelevant maximal
ideal of $R$.  In this section, we study homogeneous regular sequences
$f_1,f_2,\dots,f_t \in R$ such that the ideal $I=(f_1,f_2,\dots,f_t)$
is stable under the action of $\SymGp$ and $I / \mathfrak{m}I$
contains a copy of the standard representation.  As shown in
\cite[Prop.~2.5]{SCI1}, there are two possibilities:
$I / \mathfrak{m}I \cong S^{(n-1,1)}$ or
$I / \mathfrak{m}I \cong S^{(n-1,1)} \oplus S^{(n)}$, where $S^{(n)}$
is the one-dimensional trivial representation.

\subsection{Regular sequences of type $S^{(n-1,1)}$}\label{reduced rep}

Here we prove the existence of regular sequences of type $S^{(n-1,1)}$
in every positive degree.

Let $\V_d \subset \AA^n$ denote the affine variety cut out by the
$x_1^d-x_n^d,x_2^d-x_n^d,\dots,x_{n-1}^d-x_n^d$ and $x_n=1$; i.e.,
$$\V_d = \{(z_1,z_2,\dots,z_n) \in \AA^n : z_i^d=1, z_n=1\}.$$ 

\begin{theorem}
  \label{red_rep}
  Let $d$ be a positive integer. The polynomials
  $x_1^d - x_n^d,x_2^d - x_n^d,\dots,x_{n-1}^d - x_n^d$ form a regular
  sequence of type $S^{(n-1,1)}$.
\end{theorem}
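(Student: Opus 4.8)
The plan is to show that the $n-1$ polynomials $x_i^d - x_n^d$ ($i=1,\dots,n-1$) form a regular sequence by a dimension count, and then to identify the $\SymGp$-representation they span. For the regularity, recall that a sequence of $n-1$ homogeneous polynomials in $R = \CC[x_1,\dots,x_n]$ is regular if and only if the variety they cut out in $\AA^n$ has dimension exactly $1$ (equivalently, the projective variety in $\PP^{n-1}$ has dimension $0$); by Krull's principal ideal theorem the dimension is always at least $1$, so the content is the upper bound. The common zero locus of $x_1^d - x_n^d,\dots,x_{n-1}^d-x_n^d$ in $\AA^n$ is the cone over a finite set of points: indeed if $x_n = 0$ then all $x_i = 0$, and if $x_n \neq 0$ we may scale to $x_n = 1$, landing in the finite set $\V_d = \{(z_1,\dots,z_{n-1},1) : z_i^d = 1\}$, which has exactly $d^{\,n-1}$ points. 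Hence the affine variety is a union of $d^{\,n-1}$ lines through the origin, so it has dimension $1$, and the sequence is regular.

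Next I would pin down the representation type. The span $W := \langle x_1^d - x_n^d, \dots, x_{n-1}^d - x_n^d\rangle$ inside $R_d$ is $\SymGp$-stable: a permutation $\sigma$ sends $x_i^d - x_n^d$ to $x_{\sigma(i)}^d - x_{\sigma(n)}^d = (x_{\sigma(i)}^d - x_n^d) - (x_{\sigma(n)}^d - x_n^d)$, which lies in $W$. In fact $W$ is precisely the image under the $\SymGp$-equivariant linear map $R_1 \to R_d$, $x_i \mapsto x_i^d$, of the standard subrepresentation $\langle x_1 - x_n, \dots, x_{n-1}-x_n\rangle \cong S^{(n-1,1)}$ described in the text. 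Since $d \geq 1$, the $n$ monomials $x_1^d,\dots,x_n^d$ are linearly independent in $R_d$, so this map is injective; therefore $W \cong S^{(n-1,1)}$ as $\SymGp$-modules. Finally, $I/\mathfrak{m}I$ is spanned by the images of a minimal generating set of $I$; since the $n-1$ generators are linearly independent modulo $\mathfrak{m}I$ (they live in a single degree $d$ and are $\CC$-linearly independent there), we get $I/\mathfrak{m}I \cong W \cong S^{(n-1,1)}$, so the sequence has type $S^{(n-1,1)}$ as claimed.

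The only mildly delicate point is the passage from "the affine variety is a finite union of lines" to "the sequence is regular": this uses the standard fact that for a homogeneous ideal generated by $n-1$ elements in an $n$-dimensional polynomial ring, being a regular sequence is equivalent to the quotient having Krull dimension $1$ (or, via the unmixedness/Cohen--Macaulay criterion for complete intersections, equivalently that the height of the ideal equals the number of generators, which follows once we know every component of the zero locus has dimension $1$). I expect this to be the main obstacle only in the sense of citing it cleanly; there is no real computation. Everything else — counting the points of $\V_d$, checking $\SymGp$-stability, and identifying the module as $S^{(n-1,1)}$ — is routine.
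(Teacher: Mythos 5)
Your proof is correct, but the regularity half goes by a different route than the paper's. The paper's argument is a one-line extension trick: append $x_n^d$ to the sequence and observe the ideal identity $(x_1^d-x_n^d,\dots,x_{n-1}^d-x_n^d,\,x_n^d)=(x_1^d,\dots,x_n^d)$, whose only common zero is the origin; with the paper's definition of a regular sequence (extendability to a maximal one) this finishes immediately, with no appeal to unmixedness or Cohen--Macaulayness. You instead compute the common zero locus of the $n-1$ polynomials explicitly --- the cone over the $d^{\,n-1}$ points of $\V_d$, hence a union of lines through the origin --- and then invoke the height/complete-intersection criterion. That criterion is the one step in your write-up that genuinely needs a citation, and note that with the paper's definition you could bypass it: since no component of your zero locus lies in the hyperplane $x_n=0$, appending $x_n$ (or $x_n^d$) cuts the locus down to the origin, which is exactly the paper's extension. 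A small dividend of your computation is that it exhibits the zero set as the cone over $\V_d$, the variety introduced just before the theorem and used repeatedly afterwards (e.g.\ in the criterion that the sequence extended by a symmetric $f$ is regular iff $f$ does not vanish on $\V_d$). The representation-theoretic part is essentially the paper's: you realize the span as the image of $\langle x_1-x_n,\dots,x_{n-1}-x_n\rangle\cong S^{(n-1,1)}$ under the $\SymGp$-equivariant injection $x_i\mapsto x_i^d$, while the paper identifies it as the $\SymGp$-stable complement of $\langle x_1^d+\dots+x_n^d\rangle$ inside $\langle x_1^d,\dots,x_n^d\rangle$; these are the same identification, and your extra observation that $I/\mathfrak{m}I$ is concentrated in degree $d$ (because $I$ has no elements of degree less than $d$) is a correct, if routine, justification of the phrase ``of type $S^{(n-1,1)}$.''
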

 
\begin{proof}
  The polynomials in question form a basis of the $\SymGp$-stable
  complement of the one-dimensional invariant subspace spanned by
  $x_1^d + x_2^d + \dots + x_n^d$ inside
  $\langle x_1^d, x_2^d, \dots, x_n^d\rangle$.  It is clear from the
  comments at the beginning of the section that this complement is
  isomorphic to $S^{(n-1,1)}$.

  To prove $x_1^d - x_n^d,x_2^d - x_n^d,\dots,x_{n-1}^d - x_n^d$ form
  a regular sequence, we extend it by adding the polynomial $x_n^d$.
  It is clear that the two ideals
  $(x_1^d - x_n^d,x_2^d - x_n^d,\dots,x_{n-1}^d - x_n^d,x_n^d)$ and
  $(x_1^d ,x_2^d,\dots,x_{n-1}^d ,x_n^d)$ are equal and that the
  latter is generated by a regular sequence.  Thus the extended
  sequence, and so also the original, is a regular sequence.
\end{proof}  
  
\subsection{Regular sequences of type $S^{(n-1,1)} \oplus S^{(n)}$}

Let $I\subseteq R$ be an $\SymGp$-stable homogeneous ideal such that
$I/\mathfrak{m}I \cong S^{(n-1,1)} \oplus S^{(n)}$. Then $I$ admits a
generating set $g_1,g_2,\dots,g_{n-1},f$ such that:
\begin{itemize}
\item $\deg (g_i) = d$ for $i=1,2,\dots,n-1$ and the vector space
  spanned by $g_1,g_2,\dots,g_{n-1}$ is a representation of $\SymGp$
  isomorphic to $S^{(n-1,1)}$;
\item $\deg (f) = a$ and $f \in R^\SymGp$.
\end{itemize}
We are interested in understanding the possible choices of degrees $d$
and $a$ for which such an ideal $I$ can be generated by a regular
sequence.  For simplicity, we restrict to the case
$g_i = x_i^d - x_n^d$ for $i=1,2,\dots,n-1$. This is the instance of
regular sequence described in Theorem \ref{red_rep}. Therefore our
main question becomes: when can a symmetric polynomial $f$ of degree
$a$ be chosen so that
$x_1^d - x_n^d, x_2^d - x_n^d, \dots, x_{n-1}^d - x_n^d, f$ is a
regular sequence?

\begin{definition}\label{good_bad}
  Let $n,d,a$ be three positive integers.  We say the triple $(n,d,a)$
  is {\em \good} if there exists $f \in R_a^{\SymGp}$ such that
  $x_1^d-x_n^d,x_2^d-x_n^d,\dots,x_{n-1}^d-x_n^d,f$ is a regular
  sequence.  Otherwise $(n,d,a)$ is called {\em \bad}.
\end{definition}

\begin{remark}
  \label{rem:M2_rem}
  Clearly, if $(n,d,a)$ is \good, then there exists a regular sequence
  of type $S^{(n-1,1)} \oplus S^{(n)}$ with $S^{(n-1,1)}$ in degree
  $d$ and $S^{(n)}$ in degree $a$.  However, the converse is not true
  in general.  For example, the triple $(5,6,1)$ is \bad{} because
  $x_1^6-x_5^6,x_2^6-x_5^6,x_3^6-x_5^6,x_4^6-x_5^6,e_1$ is not a
  regular sequence.  However, if we set
  $g_i = \sum_{j=2}^5 e_j (x_i^{6-j} - x_5^{6-j})$ for $i=1,2,3,4$,
  then $g_1,g_2,g_3,g_4,e_1$ is a regular sequence.  The assertions
  about these sequences of polynomials can be verified computationally
  using the software Macaulay2 \cite{M2}, and the code provided in
  Appendix \ref{sec:macaulay2-code}.
\end{remark}

Observe that, if $f \in R$ is homogeneous, then
$x_1^d-x_n^d,x_2^d-x_n^d,\dots,x_{n-1}^d-x_n^d,f$ is a regular
sequence if and only if $f$ does not vanish on $\V_d$.

For a positive integer $a$, the \emph{power sum}
$\ps_a = x_1^a + x_2^a + \dots + x_n^a$ is a homogeneous symmetric
polynomial of degree $a$. Furthermore, given a partition
$\lambda = (\lambda_1, \lambda_2, \dots, \lambda_r)$ of $a$, we write
$\ps_\lambda$ for the symmetric polynomial
$\prod_{t=1}^r \ps_{\lambda_t}$ of degree $a$.  The set of
$\ps_\lambda$ with
$\lambda = (\lambda_1, \lambda_2, \dots, \lambda_r)$ a partition of
$a$ whose parts $\lambda_i$ do not exceed $n$ is a basis of
$R^\SymGp_a$ as a complex vector space
(cf. \cite[Prop.~7.8.2]{Stanley}).

\begin{lemma}\label{power sums suffice}
  The triple $(n,d,a)$ is \bad{} if and only if there exists a point
  $Q \in \V_d$ such that $\ps_\lambda(Q)=0$ for every partition
  $\lambda \vdash a$.
\end{lemma}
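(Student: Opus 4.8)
The plan is to unwind the definition of \bad{} using the remark immediately preceding the statement: a triple $(n,d,a)$ is \bad{} exactly when every $f \in R_a^{\SymGp}$ vanishes somewhere on $\V_d$. The first step is to replace the condition ``$f$ vanishes on $\V_d$'' by a statement about a single fixed point. Since $\V_d$ is a finite set (it is cut out by $z_i^d = 1$, $z_n = 1$, so $|\V_d| = d^{n-1}$), if every $f \in R_a^{\SymGp}$ vanished somewhere on $\V_d$ but no common zero existed, one could multiply together, for each point $Q \in \V_d$, a symmetric polynomial of degree $a$ not vanishing at $Q$ — but this raises the degree, so a direct product argument fails. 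Instead I would argue contrapositively and use finiteness more carefully: if for \emph{each} $Q \in \V_d$ there is some partition $\lambda_Q \vdash a$ with $\ps_{\lambda_Q}(Q) \neq 0$, then a generic complex linear combination $f = \sum_Q c_Q\, \ps_{\lambda_Q}$ is a symmetric polynomial of degree $a$ that is nonzero at every point of the finite set $\V_d$; hence $(n,d,a)$ is \good. This establishes the ``only if'' direction in contrapositive form.

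For the converse, suppose there is a point $Q \in \V_d$ with $\ps_\lambda(Q) = 0$ for every partition $\lambda \vdash a$ (with parts at most $n$, but note that $\ps_k(Q)$ for $k > n$ is still a polynomial in the $\ps_j(Q)$ with $j \le n$, or one can simply observe that having all these vanish forces all degree-$a$ symmetric evaluations to vanish). Since the $\ps_\lambda$ with $\lambda \vdash a$ (parts $\le n$) form a $\CC$-basis of $R_a^{\SymGp}$, every $f \in R_a^{\SymGp}$ is a linear combination of the $\ps_\lambda$, hence $f(Q) = 0$. So no $f \in R_a^{\SymGp}$ can serve as the final element of a regular sequence extending the $x_i^d - x_n^d$, i.e. $(n,d,a)$ is \bad.

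The only genuine content is the ``only if'' direction, and its crux is the passage from ``for each point, some polynomial works'' to ``one polynomial works for all points simultaneously'': this is exactly the standard fact that over an infinite field a finite union of proper linear subspaces (here, for each $Q$, the hyperplane of degree-$a$ symmetric polynomials vanishing at $Q$) cannot cover the whole space $R_a^{\SymGp}$. I expect this to be the main (though routine) step; once it is in place the rest is bookkeeping with the power-sum basis quoted from \cite[Prop.~7.8.2]{Stanley}. One point to be careful about is whether $a < 1$ or $\V_d = \emptyset$ cause degenerate behavior, but $a$ is a positive integer and $\V_d$ is always nonempty (it contains $(1,1,\dots,1)$), so these cause no trouble; indeed at $Q = (1,\dots,1)$ we have $\ps_\lambda(Q) = n^{\ell(\lambda)} \neq 0$, consistent with the fact that constant multiples of $\ps_1^a = e_1^a$ already handle many cases.
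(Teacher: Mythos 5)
Your proof is correct, but it takes a genuinely different route from the paper at the one nontrivial step. For the direction you identify as the crux (\bad{} implies a single common zero $Q$ of all $\ps_\lambda$ with $\lambda \vdash a$), you argue contrapositively and use that $\V_d$ is a finite set: for each $Q \in \V_d$ the degree-$a$ symmetric polynomials vanishing at $Q$ form a proper subspace of $R_a^{\SymGp}$ (proper by your hypothesis $\ps_{\lambda_Q}(Q) \neq 0$), and since $\CC$ is infinite a finite union of proper subspaces cannot cover the space, so a generic combination $\sum_Q c_Q\,\ps_{\lambda_Q}$ is nowhere zero on $\V_d$ and witnesses that $(n,d,a)$ is \good. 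The paper instead works directly with the \bad{} hypothesis: it forms the single polynomial $f = \sum_{i=1}^t \pi^i \ps_{\lambda^{(i)}}$, where $\lambda^{(1)},\dots,\lambda^{(t)}$ are the partitions of $a$ with parts at most $n$; badness yields one point $Q \in \V_d$ with $f(Q)=0$, and since the coordinates of $Q$ are roots of unity each $\ps_{\lambda^{(i)}}(Q)$ is algebraic, so transcendence of $\pi$ forces $\ps_{\lambda^{(i)}}(Q)=0$ for all $i$ at that same point. Your argument is more elementary and more robust: it needs only that $\V_d$ is finite and the field infinite, with no arithmetic input about the points. The paper's argument buys an explicit ``universal'' degree-$a$ symmetric polynomial whose non-vanishing on $\V_d$ detects goodness, exploiting the algebraicity of the coordinates. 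The easy direction is handled the same way in both (the $\ps_\lambda$ with parts at most $n$ span $R_a^{\SymGp}$), and your side remarks about $\V_d \neq \emptyset$ and about partitions with parts exceeding $n$ are fine and do not affect the argument.
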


\begin{proof}
  If such a point $Q$ exists, then it is clear that $(n,d,a)$ is \bad.
  Suppose then that $(n,d,a)$ is \bad{}. Enumerate the partitions
  $\lambda \vdash a$ whose parts do not exceed $n$ and denote them by
  $\lambda^{(1)}, \lambda^{(2)}, \dots, \lambda^{(t)}$. Introduce the
  homogeneous symmetric polynomial
  $$f := \sum_{i=1}^t \pi^i \ps_{\lambda^{(i)}}$$
  of degree $a$. Since $(n,d,a)$ is \bad{}, there exists $Q\in \V_d$
  such that
  $$0 = f(Q) = \sum_{i=1}^t \pi^i \ps_{\lambda^{(i)}} (Q).$$
  Since the coordinates of $Q$ are algebraic numbers,
  $\ps_{\lambda^{(i)}} (Q)$ is algebraic for all $i=1,2,\dots,t$.
  Then $f(Q)=0$ implies $\ps_{\lambda^{(i)}} (Q) = 0$ for all
  $i=1,2,\dots,t$ because $\pi$ is transcendental. The result follows.
\end{proof}

The following is an immediate consequence of Lemma \ref{power sums
  suffice}.

\begin{cor}\label{single Q}
  The triple $(n,d,a)$ is \bad{} if and only if there exists a point
  $Q \in \V_d$ such that $f(Q)=0$ for every $f \in R_a^{\SymGp}$.
\end{cor}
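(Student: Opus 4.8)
The plan is to read the statement off Lemma~\ref{power sums suffice} together with the basis fact recalled immediately before it, namely that the polynomials $\ps_\lambda$, with $\lambda$ a partition of $a$ whose parts do not exceed $n$, form a $\CC$-basis of $R_a^{\SymGp}$. The only content of the corollary beyond the lemma is the translation of the condition ``$\ps_\lambda(Q)=0$ for every partition $\lambda\vdash a$'' into the condition ``$f(Q)=0$ for every $f\in R_a^{\SymGp}$'', and this translation is purely linear algebra: since the lemma and the corollary have the same shape (``$(n,d,a)$ is \bad{} iff there is a point $Q\in\V_d$ satisfying \dots''), it suffices to check that for a fixed $Q\in\V_d$ the two vanishing conditions are equivalent, and then invoke the lemma.

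Concretely, I would fix a point $Q\in\V_d$ and show that the following three conditions on $Q$ are equivalent: (i) $f(Q)=0$ for every $f\in R_a^{\SymGp}$; (ii) $\ps_\lambda(Q)=0$ for every partition $\lambda\vdash a$; (iii) $\ps_\lambda(Q)=0$ for every partition $\lambda\vdash a$ all of whose parts are at most $n$. The implication (i)$\Rightarrow$(ii) is immediate because each $\ps_\lambda$ lies in $R_a^{\SymGp}$ (this uses only that $\ps_k$ is a well-defined homogeneous symmetric polynomial of degree $k$, even when $k>n$, expressible in $\ps_1,\dots,\ps_n$), and (ii)$\Rightarrow$(iii) is trivial. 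For (iii)$\Rightarrow$(i), note that evaluation at $Q$ is a $\CC$-linear functional on the finite-dimensional space $R_a^{\SymGp}$; since the $\ps_\lambda$ with parts at most $n$ span that space, a functional that vanishes on all of them vanishes identically. Substituting the resulting equivalence (i)$\Leftrightarrow$(ii) into Lemma~\ref{power sums suffice} gives exactly the asserted characterization of \bad{} triples.

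I do not anticipate any genuine obstacle here: Lemma~\ref{power sums suffice} does all the real work — in particular it is there that the transcendence argument is used to decouple the vanishing of the individual $\ps_\lambda$ from the vanishing of a single symmetric polynomial — and what remains is only the elementary observation that a linear functional vanishing on a spanning set vanishes on the whole space. If one wished to avoid power sums altogether, one could instead argue directly that $(n,d,a)$ is \bad{} exactly when $\V_d$ meets the common zero locus of $R_a^{\SymGp}$, using the observation preceding Lemma~\ref{power sums suffice} that $f$ extends the $x_i^d-x_n^d$ to a regular sequence iff $f$ does not vanish on $\V_d$; but this reformulation still relies on the same finite-dimensionality and spanning facts, so routing the proof through Lemma~\ref{power sums suffice} is the most economical.
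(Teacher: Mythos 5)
Your proposal is correct and matches the paper, which derives the corollary as an immediate consequence of Lemma~\ref{power sums suffice} via exactly the spanning fact you cite (the $\ps_\lambda$ with parts at most $n$ form a basis of $R_a^{\SymGp}$); you have simply written out the linear-algebra step the paper leaves implicit.
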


Lemma \ref{power sums suffice} suggests it might be useful to
understand the vanishing of power sums at roots of unity. The
following result is due to Lam and Leung \cite[Thm.~5.2]{vanishing
  sums}.

\begin{theorem}\label{LL}
  Let $d$ be a positive integer and let $\Gamma(d)$ denote the
  numerical semi-group generated by the prime divisors of $d$.  Then
  there exist $d^\text{th}$ roots of unity $z_1,z_2,\dots,z_n$ (not
  necessarily distinct) such that $z_1+z_2+\dots+z_n=0$ if and only
  if $n \in \Gamma(d)$.
\end{theorem}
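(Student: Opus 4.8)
The plan is to prove the two implications separately: the forward direction by an explicit construction, and the converse by translating the problem into a divisibility statement for cyclotomic polynomials and then inducting on $d$.

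For the ``if'' direction, suppose $n\in\Gamma(d)$ and write $n=\sum_{\ell}c_\ell p_\ell$ with $c_\ell\in\NN$, where $p_1,\dots,p_k$ are the prime divisors of $d$. For each $\ell$ with $c_\ell>0$, and for each of the $c_\ell$ copies, pick any $w\in\mu_d$ (the group of $d$-th roots of unity) and take the $p_\ell$ roots $w,w\zeta_{p_\ell},\dots,w\zeta_{p_\ell}^{p_\ell-1}$, where $\zeta_{p_\ell}$ is a primitive $p_\ell$-th root of unity; since $p_\ell\mid d$ these are $d$-th roots of unity, and they sum to $w(1+\zeta_{p_\ell}+\dots+\zeta_{p_\ell}^{p_\ell-1})=0$. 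Juxtaposing these blocks produces a vanishing sum of $\sum_\ell c_\ell p_\ell=n$ roots of unity.

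For the converse, note that a vanishing sum $z_1+\dots+z_n=0$ of $d$-th roots of unity is encoded by the polynomial $f(x)=\sum_{j=0}^{d-1}m_j x^j\in\ZZ_{\geq 0}[x]$, where $m_j$ is the multiplicity of $\zeta_d^j$; reducing $f$ modulo $x^d-1$ leaves $f(1)=n$ and the divisibility $\Phi_d\mid f$ unchanged, so it suffices to show: if $f\in\ZZ_{\geq 0}[x]$ and $\Phi_d\mid f$, then $f(1)\in\Gamma(d)$. I would prove this by strong induction on $d$. The base cases are $d=1$ (here $\Phi_1=x-1$ forces $f=0$, so $f(1)=0\in\Gamma(1)$) and $d=p$ prime: since $\Phi_p=1+x+\dots+x^{p-1}$ is the minimal polynomial of $\zeta_p$, a nonnegative relation $\sum m_j\zeta_p^j=0$ has all $m_j$ equal, so $f(1)=pm_0\in p\NN=\Gamma(p)$. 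For the inductive step, suppose first that $p^2\mid d$ for some prime $p$. Then the minimal polynomial of $\zeta_d$ over $\QQ(\zeta_{d/p})$ is $X^p-\zeta_{d/p}$ (degree $\varphi(d)/\varphi(d/p)=p$), so $\ZZ[\zeta_d]$ is free over $\ZZ[\zeta_{d/p}]$ with basis $1,\zeta_d,\dots,\zeta_d^{p-1}$. Grouping the $\zeta_d^j$ in the vanishing sum by the $p$ cosets of $\mu_{d/p}$ in $\mu_d$ and using this linear independence, each of the $p$ groups is itself a vanishing sum of $(d/p)$-th roots of unity; by induction each of their lengths lies in $\Gamma(d/p)=\Gamma(d)$ (equality since $p\mid d/p$), and $n$ is their sum, so $n\in\Gamma(d)$.

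This reduces the problem to $d$ squarefree. Write $d=pm$ with $\gcd(p,m)=1$ and $m>1$. Using that $\QQ(\zeta_p)\cap\QQ(\zeta_m)=\QQ$, the set $1,\zeta_p,\dots,\zeta_p^{p-2}$ is a $\QQ(\zeta_m)$-basis of $\QQ(\zeta_d)$ and $\zeta_p^{p-1}=-(1+\zeta_p+\dots+\zeta_p^{p-2})$. Writing each $d$-th root of unity uniquely as $\zeta_p^i\omega$ with $0\leq i<p$ and $\omega\in\mu_m$, and grouping the vanishing sum accordingly, linear independence forces the $p$ ``$\zeta_m$-components'' $\delta_0,\dots,\delta_{p-1}\in\ZZ[\zeta_m]$ of the groups to all equal a common value $\delta$. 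If $\delta=0$, each group is a genuine vanishing sum of $m$-th roots of unity, and the induction (now on $m<d$, with $\Gamma(m)\subseteq\Gamma(d)$) finishes. The remaining case $\delta\neq 0$ is where the real difficulty lies, and I expect it to be the main obstacle: one is left with $p$ distinct nonnegative representations $\delta=\sum_\omega m_{i,\omega}\omega$ of a single nonzero algebraic integer as a sum of $m$-th roots of unity, of lengths $L_0,\dots,L_{p-1}$ summing to $n$, and these $L_i$ need not individually lie in $\Gamma(m)$, so one cannot simply add. This is exactly the phenomenon that, once $d$ has at least three prime divisors, a vanishing sum need not decompose into rotated copies of prime-order orbits. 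Overcoming it requires a finer inductive statement than ``vanishing-sum lengths lie in $\Gamma(d)$'' — for instance, one that tracks, for every algebraic integer that is a nonnegative sum of $m$-th roots of unity, the possible lengths of such representations modulo each prime divisor of $m$ — and this bookkeeping is the technical heart of Lam and Leung's argument; everything else above is routine manipulation in cyclotomic extensions.
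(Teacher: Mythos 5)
Your argument has a genuine gap, and you have located it yourself: the converse direction is not actually proved. The ``if'' direction (juxtaposing rotated orbits $w,w\zeta_{p_\ell},\dots,w\zeta_{p_\ell}^{p_\ell-1}$) is correct, the reduction when $p^2\mid d$ is correct (degree $[\QQ(\zeta_d):\QQ(\zeta_{d/p})]=p$ forces each $\mu_{d/p}$-coset component of the sum to vanish, and $\Gamma(d/p)=\Gamma(d)$), and in the squarefree case $d=pm$ the linear-independence argument correctly forces the $p$ components $\delta_0,\dots,\delta_{p-1}\in\ZZ[\zeta_m]$ to be equal to a common $\delta$. But the case $\delta\neq 0$ is not a side case one can defer: it is the entire content of the theorem once $d$ has three or more prime divisors, because there genuinely exist minimal vanishing sums that are not unions of rotated prime orbits, so the naive induction ``each block has length in $\Gamma(m)$'' is simply false, and $n=\sum_i L_i$ cannot be controlled without a sharper invariant (Lam and Leung track congruences satisfied by the possible lengths of all nonnegative representations of a fixed element of $\ZZ[\zeta_m]$ as sums of $m$-th roots of unity). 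Saying that this bookkeeping ``is the technical heart of Lam and Leung's argument'' is accurate, but it means the proposal stops exactly where the proof has to start doing work; as written it establishes only the easy implication plus some correct but standard reductions.

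For comparison with the paper: there is nothing to compare against, because the paper does not prove this statement at all --- it is quoted verbatim as a known result of Lam and Leung \cite[Thm.~5.2]{vanishing sums} and then used as a black box (e.g.\ in Corollary \ref{LL_cor} and Propositions \ref{n_notin_Gamma} and \ref{coprime lemma}). So the honest options are either to cite the result, as the paper does, or to supply the missing fine induction from Lam--Leung's paper; the partial reductions you give do not by themselves constitute a proof.
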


Note that $\Gamma(1):=\{0\}$ here.

\begin{cor}\label{LL_cor}
  Let $a,d$ be positive integers and let $g:=\gcd(a,d)$.  Then
  there exist $d^\text{th}$ roots of unity $z_1,z_2,\dots,z_n$ (not
  necessarily distinct) such that $\ps_a(z_1,z_2,\dots,z_n)=0$ if and
  only if $n \in \Gamma(d/g)$.
\end{cor}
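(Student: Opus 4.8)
The plan is to reduce the statement to Theorem \ref{LL} by passing from $d^{\text{th}}$ roots of unity to $(d/g)^{\text{th}}$ roots of unity via the map $z \mapsto z^a$. Throughout, let $\mu_m \subset \CC$ denote the cyclic group of $m^{\text{th}}$ roots of unity.

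\emph{Step 1: identify the image of the $a^{\text{th}}$ power map.} I would first observe that raising to the $a^{\text{th}}$ power defines a group homomorphism $\mu_d \to \mu_d$. Its kernel is $\{z \in \mu_d : z^a = 1\} = \mu_{\gcd(a,d)} = \mu_g$, a subgroup of order $g$; hence the image is a subgroup of $\mu_d$ of order $d/g$, and therefore equals the unique such subgroup $\mu_{d/g}$. In particular, for every $w \in \mu_{d/g}$ there exists some $z \in \mu_d$ with $z^a = w$.

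\emph{Step 2: translate the vanishing condition.} Given $z_1,\dots,z_n \in \mu_d$, each $z_i^a$ lies in $\mu_{d/g}$ by Step 1, so $\ps_a(z_1,\dots,z_n) = z_1^a + \dots + z_n^a$ is a sum of $n$ elements of $\mu_{d/g}$. Conversely, any choice of $w_1,\dots,w_n \in \mu_{d/g}$ can be realized as $(z_1^a,\dots,z_n^a)$ for suitable $z_i \in \mu_d$, again by Step 1 (then $\ps_a(z_1,\dots,z_n) = \sum_i w_i$). Therefore there exist $d^{\text{th}}$ roots of unity $z_1,\dots,z_n$ with $\ps_a(z_1,\dots,z_n)=0$ if and only if there exist $(d/g)^{\text{th}}$ roots of unity $w_1,\dots,w_n$ with $w_1 + \dots + w_n = 0$.

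\emph{Step 3: apply Lam--Leung.} Applying Theorem \ref{LL} with $d$ replaced by $d/g$, the latter condition holds if and only if $n \in \Gamma(d/g)$, which is exactly the claim. (When $d/g = 1$ one has $\mu_1 = \{1\}$, so no vanishing sum of $n \geq 1$ terms exists, consistent with the convention $\Gamma(1) = \{0\}$.) There is no serious obstacle here; the only point requiring care is the computation in Step 1 that the image of the $a^{\text{th}}$ power map on $\mu_d$ is precisely $\mu_{d/g}$ — note in particular that $d/g$ need not have the same prime divisors as $d$, so it is essential to invoke Theorem \ref{LL} for $d/g$ rather than for $d$.
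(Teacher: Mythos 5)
Your proof is correct and follows essentially the same route as the paper: both directions reduce to Theorem \ref{LL} applied to $d/g$ by passing through the $a^{\text{th}}$ power map $\mu_d \to \mu_{d/g}$. The only difference is minor: where you get surjectivity of that map from a kernel--image count in the cyclic group $\mu_d$, the paper constructs a preimage explicitly via Bezout, writing $au+(d/g)v=1$ and taking $z_i=w_i^u$; the two arguments are equivalent in substance.
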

 
\begin{proof}
  Assume there exist $d^\text{th}$ roots of unity $z_1,z_2,\dots,z_n$
  such that $\ps_a(z_1,z_2,\dots,z_n)=0$. Note that $z_i^a$ is a
  $(d/g)^\text{th}$ root of unity. Then
 $$z_1^a+z_2^a+\dots+z_n^a =\ps_a(z_1,z_2,\dots,z_n) =0$$
 implies $n \in \Gamma (d/g)$ by Theorem \ref{LL}.

 Conversely, assume $n \in \Gamma(d/g)$. Then Theorem \ref{LL} implies
 the existence of $(d/g)^\text{th}$ roots of unity $w_1,w_2,\dots,w_n$
 such that $w_1+w_2+\dots+w_n=0$. Since $g = \gcd(a,d)$, we have
 $1 = \gcd(a,d/g)$. By Bezout's identity \cite[Prop.~5.1]{Lang},
 there exist integers $u,v$ such that $au+(d/g)v=1$. Note that
 $z_i = w_i^u$ is a $d^\text{th}$ root of unity. Therefore we get
 $$0 = \sum_{i=1}^n w_i = \sum_{i=1}^n w_i^{au+(d/g)v} =
 \sum_{i=1}^n (w_i^u)^a (w_i^{d/g})^v = \ps_a (z_1,z_2,\dots,z_n).$$
\end{proof}

\begin{remark} \label{Galois remark}
  Let $\zeta_d$ be a primitive $d^{\rm th}$ root of unity. The Galois
  group of the cyclotomic field $\QQ (\zeta_d)$ is isomorphic to
  $(\ZZ/d\ZZ)^\times$, the group of units modulo $d$. An element of
  $(\ZZ/d\ZZ)^\times$ is represented by the class of an integer $s$
  coprime to $d$. Let $\gamma_s$ denote the corresponding Galois
  automorphism of $\QQ (\zeta_d)$, which is defined by fixing $\QQ$
  and sending $\zeta_d$ to $\zeta_d^s$. If $z$ is a $d^{\rm th}$ root
  of unity, then $z$ is a power of $\zeta_d$, therefore
  $\gamma_s (z) = z^s$.

  Now let $Q = (z_1,z_2,\dots,z_n)\in \V_d$. We have that
  \begin{equation*}
    \begin{split}
      \ps_s (Q) &= z_1^s + z_2^s + \dots + z_n^s
      =\gamma_s (z_1) + \gamma_s (z_2) + \dots + \gamma_s (z_n) =\\
      &=\gamma_s (z_1 + z_2 + \dots + z_n) =\gamma_s (\ps_1 (Q)).
    \end{split}
  \end{equation*}
  Therefore $\ps_s (Q) = 0$ if and only if $\ps_1 (Q) = 0$.
\end{remark}

\subsection{Numerical criteria for good and bad triples}
\label{sec:numerical-criteria}

Throughout the rest of this section $(n,d,a)$ is intended to be a
triple of positive integers. We present criteria to decide whether
$(n,d,a)$ is \good{} or \bad{} in the sense of Definition
\ref{good_bad}.
 
\begin{prop}
  \label{n_notin_Gamma}
  Let $g:=\gcd(a,d)$.  If
  $n \notin \Gamma(d/g)$, then $(n,d,a)$ is \good.
  In particular, if $n \notin \Gamma(d)$, then $(n,d,a)$ is \good{}
  for every $a$.
\end{prop}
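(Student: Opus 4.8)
The plan is to use the characterization of \bad{} triples via vanishing power sums (Lemma \ref{power sums suffice}) together with the reduction of power-sum vanishing to the Lam--Leung criterion (Corollary \ref{LL_cor}). Suppose for contradiction that $(n,d,a)$ is \bad{}. Then there is a point $Q = (z_1,\dots,z_n)\in\V_d$ with $\ps_\lambda(Q) = 0$ for every partition $\lambda\vdash a$ whose parts are at most $n$. In particular, taking $\lambda = (a)$ when $a\leq n$ — and more generally exploiting that the coordinates $z_i$ are $d^\text{th}$ roots of unity — we want to conclude $\ps_a(Q) = 0$. The cleanest route: among the partitions of $a$ consider one that forces $\ps_a$ itself to vanish. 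If $a\leq n$, the partition $(a)$ works directly. If $a > n$, I would instead argue via Newton's identities, or more simply observe that since all $z_i^d = 1$, the value $\ps_a(Q)$ depends only on $a \bmod d$ (as $z_i^a = z_i^{a \bmod d}$), so replacing $a$ by its residue reduces to a case with $a < d$; and one checks the residue can be taken in a range where a single partition part suffices, or directly that $\ps_a(Q)=0$ follows from vanishing of the full collection of $\ps_\lambda(Q)$. In any case, once we know $\ps_a(Q) = 0$ with $Q\in\V_d$, Corollary \ref{LL_cor} gives $n\in\Gamma(d/g)$ where $g = \gcd(a,d)$, contradicting the hypothesis $n\notin\Gamma(d/g)$.

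The main subtlety is handling $a > n$, where no single power sum $\ps_a$ is among the basis elements $\ps_\lambda$. Here I would use the following observation: since $Q\in\V_d$ has all coordinates $d^\text{th}$ roots of unity, $\ps_a(Q) = \ps_{a'}(Q)$ where $a' \equiv a \pmod d$ and $0 < a' \leq d$; and $\gcd(a',d) = \gcd(a,d) = g$, so $\Gamma(d/g)$ is the same. Thus without loss of generality $a \leq d$. If additionally $a \leq n$ we are done by the partition $(a)$. If $n < a \leq d$, I would write $a = qn + r$ with $0 \leq r < n$ and use the partition $\lambda = (n,n,\dots,n,r)$ (omitting $r$ if $r=0$), whose parts are all at most $n$; then $0 = \ps_\lambda(Q) = \ps_n(Q)^q \ps_r(Q)$. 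This does not immediately give $\ps_a(Q) = 0$, so a cleaner alternative is: the symmetric functions $\ps_\lambda$ ($\lambda\vdash a$, parts $\leq n$) span $R^\SymGp_a$, which also contains the complete homogeneous symmetric function $h_a$ and, for $a \leq$ suitable bound, enough data to recover $\ps_a$ via Newton's identities — but Newton's identities express $\ps_a$ in terms of $e_1,\dots,e_a$, not cleanly in degree $a$ alone.

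Given these complications, the approach I actually expect to work best is to invoke Corollary \ref{single Q} instead of Lemma \ref{power sums suffice}: $(n,d,a)$ is \bad{} iff there is $Q\in\V_d$ with $f(Q) = 0$ for \emph{every} $f\in R^\SymGp_a$. Now $\ps_a$ need not lie in $R^\SymGp_a$ if $a > n$ in the sense of the stated basis, but of course $\ps_a$ \emph{is} a genuine element of $R^\SymGp_a$ (the basis statement just says it is a $\CC$-linear combination of the $\ps_\lambda$ with parts $\leq n$). So $\ps_a(Q) = 0$ directly, for any \bad{} triple, and then Corollary \ref{LL_cor} applies verbatim: $\ps_a$ vanishes at the $d^\text{th}$ roots of unity $z_1,\dots,z_n$, hence $n\in\Gamma(d/g)$, contradiction. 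This handles all $a$ uniformly. For the ``in particular'' clause: if $n\notin\Gamma(d)$, then since $\Gamma(d/g)\subseteq\Gamma(d)$ — the primes dividing $d/g$ are among those dividing $d$ — we get $n\notin\Gamma(d/g)$ for every $a$, so $(n,d,a)$ is \good{} for all $a$. The main obstacle is really just making sure the power sum $\ps_a$ is recognized as an element of $R^\SymGp_a$ regardless of whether $a\leq n$; once Corollary \ref{single Q} is used rather than the explicit basis, this is immediate and the proof is short.
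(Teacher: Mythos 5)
Your final argument is correct and is essentially the paper's own proof read contrapositively: the paper simply notes that when $n\notin\Gamma(d/g)$, Corollary~\ref{LL_cor} shows the power sum $\ps_a$ (which is a genuine element of $R^\SymGp_a$ for every $a$, as you eventually recognize) does not vanish on $\V_d$, so it directly witnesses that $(n,d,a)$ is \good{}, and the ``in particular'' clause follows from $\Gamma(d/g)\subseteq\Gamma(d)$ exactly as you say. The detours in your first two paragraphs (partitions with parts at most $n$, reduction of $a$ modulo $d$, Newton's identities) are unnecessary; the clean version you settle on, via Corollary~\ref{single Q} (or even just the definition of \bad{}) plus Corollary~\ref{LL_cor}, is the intended proof.
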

\begin{proof}
  If $n \notin \Gamma (d/g)$, then $\ps_a$ does not vanish on $\V_d$
  by Corollary \ref{LL_cor}, thus $(n,d,a)$ is good. The second
  assertion follows from the fact that
  $\Gamma (d/g) \subseteq \Gamma (d)$ for any divisor $g$ of $d$.
\end{proof}
 
\begin{remark}
  The proof of Proposition \ref{n_notin_Gamma} uses a power sum as the
  symmetric polynomial of degree $a$.  It seems that we might be able
  to use Theorem \ref{LL} to handle more cases by using some other
  symmetric polynomial $f$.  While it is possible that
  $n \in \Gamma(d)$ and $f \in R^\SymGp$ is homogeneous having $m$
  terms with $m \notin \Gamma(d)$, this only happens in two cases.

  The first case is $f = e_n$, the $n^{\rm th}$ elementary symmetric
  polynomial, which consists of a single term and does not vanish on
  $\V_d$.  In particular, this shows that if $n$ divides $a$, then
  $(n,d,a)$ is \good.

  The second case is essentially when $d$ is a power of a prime.  See
  Corollaries~\ref{prime power} and \ref{essentially} below.  In
  fact, suppose two distinct primes $p,q$ divide $d$, $n \geq p+q$,
  $n \in \Gamma(d)$ and let $f$ be a non-constant symmetric polynomial
  having $m$ terms.  Then $n \geq p+q$ implies that
  $\binom{n}{2} \geq (p-1)(q-1)$.  Thus, if
  $m \geq \binom{n}{2}$, then $m \geq (p-1)(q-1)$, which implies
  $m \in \Gamma(pq)$ (cf. \cite[Thm.~2.1.1]{Sylvester}). Since
  $\Gamma (pq) \subseteq \Gamma (d)$, we deduce that
  $m \geq \binom{n}{2}$ implies $m\in \Gamma (d)$.  Therefore, if
  $m \notin \Gamma(d)$, then $m < \binom{n}{2}$. Since we are
  assuming $n\in \Gamma(d)$, this implies that $f = \lambda e_n$ for
  some scalar $\lambda$.
\end{remark}

\begin{prop}\label{S semigroup}
  Define $S := \left\{ q : q \mid d, n \notin \Gamma(d/q)
  \right\}$. If $a$ lies in the numerical semi-group generated by $S$,
  then the triple $(n,d,a)$ is \good.
\end{prop}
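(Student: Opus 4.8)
The plan is to show that $(n,d,a)$ is good by constructing, for each $a$ in the numerical semigroup generated by $S$, a symmetric polynomial $f$ of degree $a$ that does not vanish anywhere on $\V_d$; equivalently, by Corollary~\ref{single Q}, one must rule out the existence of a point $Q\in\V_d$ killing all of $R_a^\SymGp$. First I would record the base case: if $q\in S$, then $n\notin\Gamma(d/q)$, and since $q\mid d$ we have $\gcd(q,d)=q$, so Corollary~\ref{LL_cor} says $\ps_q$ does not vanish on $\V_d$; hence $(n,d,q)$ is good for every $q\in S$. In particular $\ps_q(Q)\neq 0$ for every $Q\in\V_d$.

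Next I would handle a general element $a$ of the semigroup generated by $S$, say $a=\sum_{q\in S} c_q\,q$ with $c_q\in\NN$. The natural candidate is the product $f:=\prod_{q\in S}\ps_q^{\,c_q}$, which is symmetric and homogeneous of degree $\sum c_q q = a$. For any $Q\in\V_d$, $f(Q)=\prod_{q\in S}\ps_q(Q)^{c_q}$, and every factor is nonzero by the base case, so $f(Q)\neq 0$. Therefore $f$ does not vanish on $\V_d$, which by the observation preceding Lemma~\ref{power sums suffice} means that $x_1^d-x_n^d,\dots,x_{n-1}^d-x_n^d,f$ is a regular sequence; hence $(n,d,a)$ is good. (One should also treat the degenerate possibility $S=\varnothing$: then the semigroup it generates contains no positive integer — or only $0$, depending on convention — so the hypothesis is vacuous and there is nothing to prove.)

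This argument is genuinely short because all the analytic content has been pushed into Corollary~\ref{LL_cor}, so the only thing to be careful about is bookkeeping. The one point deserving a sentence of justification is why a \emph{product} of nonvanishing symmetric polynomials is again a legitimate certificate: a product $gh$ does not vanish at $Q$ precisely when neither $g$ nor $h$ does, which is exactly the factorization principle already used in Lemma~\ref{lem:reg_seq_factors}. I do not anticipate a serious obstacle; the mild subtlety is simply making sure the exponents $c_q$ are allowed to be zero (so that $a$ need not use every generator in $S$) and that the empty-semigroup edge case is dispatched cleanly.
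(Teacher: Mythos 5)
Your proposal is correct and follows essentially the same route as the paper: write $a$ as a sum of elements of $S$, take the corresponding product of power sums (your $\prod_q \ps_q^{c_q}$ is exactly the paper's $\ps_\lambda$), and use Corollary~\ref{LL_cor} to see that each factor, hence the product, is nonvanishing on $\V_d$. The empty-$S$ worry is moot since $d\in S$ always.
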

  
\begin{proof}
  By the hypothesis, we can write $a = \sum_{i=1}^r \lambda_i$, where
  $\lambda_i \in S$ for $i=1,2,\dots,r$ and
  $\lambda_1 \geq \lambda_2 \geq \dots \geq \lambda_r$. Then
  $\lambda = (\lambda_1,\lambda_2,\dots,\lambda_r)$ is a partition of
  $a$ and $\ps_\lambda$ is a symmetric polynomial of degree $a$.

  Since $\lambda_i \in S$, we have that $\lambda_i \mid d$, hence
  $\gcd (\lambda_i,d) = \lambda_i$. Moreover,
  $n\notin \Gamma (d/\lambda_i)$. Therefore Corollary \ref{LL_cor}
  implies that $\ps_{\lambda_i}$ does not vanish on $\V_d$. Since this
  holds for all indices $i=1,2,\dots,r$, we conclude that
  $\ps_\lambda (Q)$ does not vanish on $\V_d$. Therefore $(n,d,a)$ is
  \good.
\end{proof}

\begin{remark}
  Note that $d \in S$ always.  Furthermore, if
  $d = p_1^{b_1} p_2^{b_2}\cdots p_t^{b_t}$ is the prime factorization
  of $d$, then the set
  $$\left\{ \frac{d}{p_i^{b_i}} : p_i \nmid n\right\}$$
  is a subset of $S$.
\end{remark}

\begin{remark}
  Proposition \ref{S semigroup} remains true if we use $S\cup \{n\}$
  instead of $S$. In fact, if $a$ lies in the numerical semi-group
  generated by $S\cup {n}$, then $a = b + cn$, where $b,c$ are
  positive integers and $b$ lies in the numerical semi-group generated
  by $S$. By the proof of Proposition \ref{S semigroup}, there exists
  $\lambda \vdash b$ such that $\ps_\lambda$ does not vanish on
  $\V_d$. At the same time, the elementary symmetric polynomial $e_n$
  does not vanish on $\V_d$. Therefore $\ps_\lambda e_n^c$ is a
  homogeneous symmetric polynomial of degree $a$ which does not vanish
  on $\V_d$.
\end{remark}

\begin{prop}\label{coprime lemma}
  Suppose that $n \in \Gamma(d)$ and
  $a \notin \Gamma(d)$.  Then $(n,d,a)$ is \bad.
\end{prop}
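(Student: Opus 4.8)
The plan is to reduce the statement, via Lemma~\ref{power sums suffice}, to constructing a single point $Q \in \V_d$ at which $\ps_\lambda$ vanishes for every partition $\lambda \vdash a$. Since $n \in \Gamma(d)$, I can write $n = \sum_{i=1}^k c_i p_i$, where $p_1,\dots,p_k$ are the distinct prime divisors of $d$ and the $c_i$ are nonnegative integers; set $P = \{p_i : c_i > 0\}$, the set of primes that actually occur. The whole argument will then hinge on the fact that $P$ consists of prime divisors of $d$, so the numerical semigroup $\langle P\rangle$ it generates satisfies $\langle P\rangle \subseteq \Gamma(d)$.

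The point $Q$ will be assembled from ``blocks'' of roots of unity. For each $p \in P$ fix a primitive $p^{\text{th}}$ root of unity $\omega_p$ and form the $p$-tuple $B_p = (1, \omega_p, \omega_p^2, \dots, \omega_p^{p-1})$; since $p \mid d$, every entry of $B_p$ is a $d^{\text{th}}$ root of unity. Let $Q$ be the concatenation of $c_i$ copies of $B_{p_i}$ over all $i$ with $c_i > 0$: this is a tuple of length $\sum c_i p_i = n$, and after reordering the coordinates so that the last one equals $1$ (each block contains a $1$, and there is at least one block), we obtain $Q \in \V_d$.

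Next I would compute the power sums of $Q$. For any integer $b \geq 1$, the $b^{\text{th}}$ power sum over a single block $B_p$ is $\sum_{j=0}^{p-1}\omega_p^{jb}$, which equals $p$ when $p \mid b$ and $0$ otherwise; summing over the blocks gives $\ps_b(Q) = \sum_{i : p_i \mid b} c_i p_i$. As the $c_i$ and $p_i$ are positive, this means $\ps_b(Q) = 0$ exactly when no prime of $P$ divides $b$. Now fix any partition $\lambda = (\lambda_1,\dots,\lambda_r) \vdash a$. If every part $\lambda_t$ were divisible by some prime of $P$, then each $\lambda_t \in \langle P\rangle$, hence $a = \sum_t \lambda_t \in \langle P\rangle \subseteq \Gamma(d)$, contradicting $a \notin \Gamma(d)$. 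So some part $\lambda_t$ is divisible by no prime of $P$, whence $\ps_{\lambda_t}(Q) = 0$ and therefore $\ps_\lambda(Q) = \prod_t \ps_{\lambda_t}(Q) = 0$. Since this holds for every $\lambda \vdash a$, Lemma~\ref{power sums suffice} shows that $(n,d,a)$ is \bad.

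Once Lemma~\ref{power sums suffice} is in hand, the argument is elementary; the only genuine idea is the block construction together with the evaluation $\ps_b(Q) = \sum_{i : p_i \mid b} c_i p_i$. The step most worth checking carefully is the interplay between the semigroups: one needs both the inclusion $\langle P\rangle \subseteq \Gamma(d)$ and the observation that the evaluation of $\ps_b$ at $Q$ is valid for \emph{all} $b$, with no restriction $b \leq n$, so that the vanishing of $\ps_\lambda(Q)$ holds for every partition of $a$ and Lemma~\ref{power sums suffice} applies directly. I do not anticipate any serious obstacle beyond getting these bookkeeping points right.
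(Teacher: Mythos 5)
Your proof is correct, and it reaches the conclusion by a route that differs from the paper's in the key step. The paper's proof also reduces to Lemma~\ref{power sums suffice}, but it takes the point $Q$ abstractly: since $n \in \Gamma(d)$, Theorem~\ref{LL} (Lam--Leung) supplies some $Q \in \V_d$ with $\ps_1(Q)=0$, and then the Galois-conjugation argument of Remark~\ref{Galois remark} (namely $\ps_s(Q) = \gamma_s(\ps_1(Q))$ for $\gcd(s,d)=1$) forces $\ps_{\lambda_t}(Q)=0$ for a part $\lambda_t$ coprime to $d$, such a part existing because $a \notin \Gamma(d)$. You instead build $Q$ explicitly by concatenating $c_i$ full blocks of $p_i$-th roots of unity from a representation $n=\sum c_i p_i$, and compute every power sum directly, $\ps_b(Q)=\sum_{p_i \mid b} c_i p_i$; your partition step (if every part were divisible by a prime of $P$ then $a \in \langle P\rangle \subseteq \Gamma(d)$) is the exact analogue of the paper's ``some part is coprime to $d$.'' What each approach buys: the paper's version is shorter given that Theorem~\ref{LL} and Remark~\ref{Galois remark} are already in place and are reused elsewhere, and its vanishing argument applies to \emph{every} zero-sum point of $\V_d$, which matters in later results; yours is self-contained and elementary, needing only the trivial direction of Lam--Leung (which your block construction reproves) and no Galois theory, and for your particular $Q$ it gives a slightly stronger vanishing criterion ($\ps_b(Q)=0$ whenever $b$ avoids the primes actually used to write $n$, a condition weaker than coprimality to $d$). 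Your bookkeeping points are all in order: the blocks consist of $d$-th roots of unity since each $p_i \mid d$, reordering puts a coordinate equal to $1$ last without affecting the symmetric functions, and your vanishing holds for all parts $b$ with no restriction $b \leq n$, so Lemma~\ref{power sums suffice} applies as stated.
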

    
\begin{proof}
  Since $n \in \Gamma(d)$, there exists $Q \in \V_d$ such that
  $\ps_1(Q)=0$ by Theorem \ref{LL}. If
  $\lambda=(\lambda_1,\lambda_2,\dots,\lambda_r) \vdash a$, then some
  part $\lambda_t$ is coprime to $d$ since $a \notin \Gamma(d)$.
  Hence, by Remark \ref{Galois remark}, we have $\ps_{\lambda_t}(Q)=0$
  and thus $\ps_\lambda(Q) = 0$. The reasoning holds for all
  $\lambda \vdash a$.  Therefore $(n,d,a)$ is \bad{} by Lemma
  \ref{power sums suffice}.
\end{proof}

\begin{prop}\label{two proofs}
  Let $g: = \gcd(d,n)$.  If $g\nmid a$, then $(n,d,a)$ is \bad.
\end{prop}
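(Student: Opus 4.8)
The plan is to apply Lemma~\ref{power sums suffice}: it suffices to produce a single point $Q \in \V_d$ at which $\ps_\lambda(Q) = 0$ for \emph{every} partition $\lambda \vdash a$. I would build $Q$ entirely out of $g$-th roots of unity, where $g = \gcd(d,n)$ (the hypothesis $g \nmid a$ of course forces $g \geq 2$).

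Concretely, write $n = gm$ with $m \geq 1$, fix a primitive $g$-th root of unity $\zeta$, and let $Q \in \CC^n$ be the point whose multiset of coordinates consists of $m$ copies of each of $1, \zeta, \zeta^2, \dots, \zeta^{g-1}$, listed so that the final coordinate is $1$. First I would check that $Q \in \V_d$: every coordinate is a $g$-th root of unity, hence a $d$-th root of unity because $g \mid d$, and the last coordinate equals $1$. The one computation needed is the geometric-sum identity
\[
  \ps_k(Q) \;=\; m\sum_{j=0}^{g-1}\zeta^{jk} \;=\;
  \begin{cases} n, & \text{if } g \mid k,\\ 0, & \text{if } g \nmid k, \end{cases}
\]
valid for every positive integer $k$.

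To finish, I would note that for any partition $\lambda = (\lambda_1, \dots, \lambda_r) \vdash a$ the parts cannot all be divisible by $g$, since otherwise $g$ would divide $\lambda_1 + \dots + \lambda_r = a$, contrary to hypothesis. Picking a part $\lambda_t$ with $g \nmid \lambda_t$, the displayed formula gives $\ps_{\lambda_t}(Q) = 0$, hence $\ps_\lambda(Q) = \prod_t \ps_{\lambda_t}(Q) = 0$. Since $\lambda$ was arbitrary, Lemma~\ref{power sums suffice} shows that $(n,d,a)$ is \bad.

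I do not anticipate a real obstacle: the two points to get right are that $Q$ actually lies in $\V_d$ --- which is precisely where the hypothesis $g \mid n$ is used, as it guarantees a coordinate equal to $1$ for the last slot --- and the elementary reduction in the last paragraph that lets one replace the whole partition by a single part. For the ``two proofs'' in the label, a second, parallel argument runs through Corollary~\ref{single Q} and the basis of $R^\SymGp_a$ consisting of products $e_{\lambda_1} \cdots e_{\lambda_r}$ of elementary symmetric polynomials: at the same $Q$ one has $\prod_i (T - z_i) = (T^g - 1)^m$, so $e_k(Q) = 0$ whenever $g \nmid k$ while $e_n(Q) = \pm 1$; because $g \mid n$, any part of $\lambda$ not divisible by $g$ is necessarily $< n$ and therefore contributes a vanishing factor $e_{\lambda_t}(Q)$, and such a part exists exactly because $g \nmid a$.
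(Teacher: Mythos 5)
Your proof is correct and rests on essentially the same idea as the paper's: the same point $Q$, namely the $g$-th roots of unity each taken with multiplicity $n/g$ (the paper writes it as $Q=(\omega,\omega^2,\dots,\omega^n)$, which lies in $\V_d$ because $g\mid n$ forces $\omega^n=1$). The only difference is bookkeeping: you verify the vanishing of all of $R^\SymGp_a$ at $Q$ via power sums and Lemma~\ref{power sums suffice}, while the paper does it via $e_j(Q)=0$ for $g\nmid j$ and Corollary~\ref{single Q} --- which is exactly the alternative argument you sketch at the end --- so both routes go through and nothing needs fixing.
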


\begin{proof}
  Let $\omega$ be a primitive $g^{\rm th}$ root of unity and define
  $Q = (\omega,\omega^2,\dots,\omega^n) \in \V_d$. Observe that
  $\omega^i = \omega^{i+gj}$ for all $i,j\in\ZZ$. Hence, using the
  auxiliary variable $y$, we have
  $$\prod_{i=1}^n (y-\omega^i) = \left[ \prod_{i=1}^g (y-\omega^i)
  \right]^{n/g} = (y^g - 1)^{n/g}.$$
  On the other hand
  $$\prod_{i=1}^n (y-\omega^i) = \sum_{j=0}^n (-1)^j e_j (Q) y^{n-j}.$$
  By comparing the two expressions, we deduce that $e_j (Q) = 0$
  whenever $g\nmid j$. Thus the only symmetric polynomials potentially
  not vanishing at $Q$ are the ones in the subring
  $\CC [e_j : g\mid j]$. Note how the degree of any element in this
  subring is divisible $g$. Since $g\nmid a$, $(n,d,a)$ is \bad{} by
  Corollary \ref{single Q}.
\end{proof}

\begin{prop}\label{a_bigger_than}
  Let $g:=\gcd (d,n)$ and assume that
  $$a \geq \frac{(n-g)(d-g)}{g}.$$
  Then $(n,d,a)$ is \bad{} if and only if $g\nmid a$.
\end{prop}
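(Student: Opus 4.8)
The plan is to dispose of one implication immediately and concentrate on the other. By Proposition~\ref{two proofs}, whenever $g \nmid a$ the triple $(n,d,a)$ is \bad{}, and this needs no assumption on the size of $a$. So the real content of the statement is the converse under the size hypothesis: if $g \mid a$ and $a \geq (n-g)(d-g)/g$, then $(n,d,a)$ is \good{}. I would prove this by reducing it to a numerical (Frobenius--Sylvester) fact and then writing down an explicit symmetric polynomial that does not vanish on $\V_d$.

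First I would set $n = g n'$ and $d = g d'$, so that $\gcd(n',d') = 1$ since $g = \gcd(n,d)$, and write $a = g a'$ using $g \mid a$. One checks that $(n-g)(d-g)/g = g(n'-1)(d'-1)$, so the hypothesis becomes $a' \geq (n'-1)(d'-1)$. Because $\gcd(n',d')=1$, the Frobenius number of the pair $\{n',d'\}$ is $n'd'-n'-d'$, hence every integer at least $(n'-1)(d'-1)$ lies in the numerical semi-group generated by $n'$ and $d'$ (cf.~\cite[Thm.~2.1.1]{Sylvester}). Therefore $a' = \alpha n' + \beta d'$ for some non-negative integers $\alpha, \beta$, and multiplying by $g$ gives $a = \alpha n + \beta d$.

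Next I would exhibit the polynomial $f := e_n^{\alpha}\,\ps_d^{\beta}$. It is a homogeneous symmetric polynomial of degree $\alpha n + \beta d = a$, so $f \in R_a^{\SymGp}$. For any $Q = (z_1,z_2,\dots,z_n) \in \V_d$ the coordinates are $d^\text{th}$ roots of unity, so $e_n(Q) = z_1 z_2 \cdots z_n \ne 0$ and $\ps_d(Q) = z_1^d + z_2^d + \dots + z_n^d = n \ne 0$; hence $f(Q) \ne 0$. Thus $f$ does not vanish at any point of $\V_d$, and by Corollary~\ref{single Q} the triple $(n,d,a)$ is \good{}. Together with the first paragraph this yields the claimed equivalence. (One could instead invoke Proposition~\ref{S semigroup} and the remark following it, since $d \in S$ always and $a = \alpha n + \beta d$ displays $a$ in the semi-group generated by $S \cup \{n\}$; the explicit $f$ above is essentially the polynomial that argument produces.)

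There is no serious obstacle here; the one thing to get right is that the stated bound $(n-g)(d-g)/g$ is exactly the Sylvester bound $(n'-1)(d'-1)$ for the \emph{coprime} pair $\{n',d'\}$ --- this is precisely where $g = \gcd(n,d)$ is used. I would also note the degenerate cases $n' = 1$ (i.e.\ $n \mid d$) and $d' = 1$ (i.e.\ $d \mid n$): there the numerical semi-group is all of the non-negative integers, $(n'-1)(d'-1) = 0$, and the construction collapses to $f = e_n^{a/n}$ or $f = \ps_d^{a/d}$ respectively, each still manifestly non-vanishing on $\V_d$, so the argument goes through unchanged.
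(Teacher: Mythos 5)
Your proposal is correct and follows essentially the same route as the paper: dispose of the $g\nmid a$ direction via Proposition~\ref{two proofs}, divide through by $g$ to reduce to the coprime pair $(n',d')$, invoke the Sylvester--Frobenius bound to write $a = sd + tn$, and then observe that $\ps_d^{s} e_n^{t}$ is a degree-$a$ symmetric polynomial nonvanishing on $\V_d$. Your explicit remarks on coprimality of $n',d'$ and the degenerate cases are harmless additions to the same argument.
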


\begin{proof}
  If $g\nmid a$, then the triple is \bad{} by Proposition \ref{two
    proofs}.

  Assume $g\mid a$ and let $a'=a/g$, $n'=n/g$, and $d'=d/g$. The
  inequality in the assumption gives
  $$a' = \frac{a}{g} \geq \frac{n-g}{g} \frac{d-g}{g}
  = (n'-1)(d'-1).$$
  By \cite[Thm.~2.1.1]{Sylvester}, $a'$ belongs to the numerical
  semi-group generated by $d'$ and $n'$. Thus we can write
  $a' = sd'+tn'$, for some non-negative integers $s$ and
  $t$. Multiplying by $g$, we obtain $a = sd+tn$. This equality
  implies that the homogeneous symmetric polynomial
  $f := \ps_d^s e_n^t$ has degree $a$. For all $Q\in \V_d$, we have
  $\ps_d (Q) = n \neq 0$. Moreover, $e_n$ does not vanish on
  $\V_d$. Therefore $f$ does not vanish on $\V_d$ and the triple
  $(n,d,a)$ is \good.
\end{proof}

\subsection{Triples and prime factors}
\label{sec:triples-with-prime}
Here we analyze the property of a triple $(n,d,a)$ being \good{} or
\bad{} in relation to certain prime factors of $n$, $d$, and $a$. We
begin by developing some technical results.

Let $z_1,z_2,\dots,z_n$ be $d^{\rm th}$ roots of unity and consider
the point $Q=(z_1,z_2,\dots,z_n) \in \AA^n$.  For an integer $v$, we
say that $Q$ is \emph{$v$-symmetric} if, given a primitive
$v^{\rm th}$ root of unity $\epsilon$, there exists $\tau \in \SymGp$
such that
$$(\epsilon z_1, \epsilon z_2, \dots, \epsilon z_n) =
(z_{\tau(1)},z_{\tau(2)},\dots,z_{\tau(n)}).$$ In other words, $Q$ is
$v$-symmetric if rotating each of the complex coordinates $z_i$ by
$2\pi/v$ radians produces a point in the $\SymGp$-orbit of $Q$.
Note that $v\mid d$ because
$1=z_{\tau (1)}^d=\epsilon^d z_1^d=\epsilon^d$ and $\epsilon$ is
primitive.
  
\begin{lemma}\label{is symmetric}
  The point $Q \in \V_d \subset \AA^n$ is $v$-symmetric if and only if
  $v \mid n$ and $e_j(Q)=0$ for all $j$ such that $v\nmid j$.
\end{lemma}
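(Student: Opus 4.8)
The plan is to encode the definition of $v$-symmetry in terms of the monic polynomial $P(y):=\prod_{i=1}^n(y-z_i)=\sum_{j=0}^n(-1)^j e_j(Q)\,y^{n-j}$, whose roots (with multiplicity) are the coordinates of $Q$. After fixing a primitive $v^{\rm th}$ root of unity $\epsilon$, the point $Q$ is $v$-symmetric precisely when the multiset $\{z_1,\dots,z_n\}$ is carried to itself by multiplication by $\epsilon$, and this holds if and only if the polynomial identity $\prod_{i=1}^n(y-\epsilon z_i)=P(y)$ holds (two monic polynomials with the same multiset of roots coincide). Recall it was already observed that $v$-symmetry forces $v\mid d$, so $\epsilon z_i$ is again a $d^{\rm th}$ root of unity and everything stays inside $\V_d$. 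Thus the whole lemma reduces to unwinding this one polynomial identity.

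For the forward implication I would factor $\epsilon$ out of each term: $\prod_i(y-\epsilon z_i)=\epsilon^n\prod_i(\epsilon^{-1}y-z_i)=\epsilon^n P(\epsilon^{-1}y)$. Setting this equal to $P(y)$ and comparing the coefficient of $y^{n-j}$ on both sides gives $e_j(Q)\,\epsilon^j=e_j(Q)$, i.e. $e_j(Q)(\epsilon^j-1)=0$, for every $j=0,1,\dots,n$. Since $\epsilon$ has exact order $v$, we have $\epsilon^j=1$ if and only if $v\mid j$, so $e_j(Q)=0$ whenever $v\nmid j$. Applying this with $j=n$ and noting that $e_n(Q)=\prod_i z_i$ is a product of roots of unity, hence nonzero, we conclude $v\mid n$.

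For the converse, assume $v\mid n$ and $e_j(Q)=0$ for all $j$ with $v\nmid j$. Then $P(y)=\sum_{v\mid j}(-1)^j e_j(Q)\,y^{n-j}$, and since $v\mid n$ and $v\mid j$ in every surviving term, each exponent $n-j$ is divisible by $v$; hence $P(y)=h(y^v)$ for some polynomial $h$. Now $P(\epsilon^{-1}y)=h(\epsilon^{-v}y^v)=h(y^v)=P(y)$ because $\epsilon^v=1$, while on the other hand $P(\epsilon^{-1}y)=\epsilon^{-n}\prod_i(y-\epsilon z_i)=\prod_i(y-\epsilon z_i)$ because $\epsilon^n=1$. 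Combining these gives $\prod_i(y-\epsilon z_i)=P(y)=\prod_i(y-z_i)$, so the multisets $\{\epsilon z_i\}$ and $\{z_i\}$ agree and there is a permutation $\tau$ with $(\epsilon z_1,\dots,\epsilon z_n)=(z_{\tau(1)},\dots,z_{\tau(n)})$; that is, $Q$ is $v$-symmetric.

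I do not expect a genuine obstacle: the argument is just a dictionary between ``multiset invariant under scaling by $\epsilon$'' and ``$P(y)$ depends only on $y^v$.'' The points requiring a little care are the bookkeeping of powers of $\epsilon$ when pulling the scalar out of $\prod_i(y-\epsilon z_i)$, the remark that $v\mid d$ keeps all points inside $\V_d$, and the (trivial but needed) observation that $\epsilon^{-1}$ is again a primitive $v^{\rm th}$ root of unity, so it is harmless to rotate by $\epsilon^{-1}$ rather than by $\epsilon$.
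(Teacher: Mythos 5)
Your argument is correct, and it rests on the same dictionary the paper uses (encode $v$-symmetry as invariance of $\prod_{i=1}^n(y-z_i)$ under the substitution $y\mapsto y/\epsilon$ up to the scalar $\epsilon^n$); your converse is essentially identical to the paper's. The forward implication is where you genuinely diverge: the paper first decomposes the coordinates of $Q$ into orbits under multiplication by $\epsilon$, notes that every orbit has size exactly $v$ because no coordinate vanishes (this gives $v\mid n$), and then writes $z_{jv+i}=\epsilon^i\omega_j$ to factor $\prod_i(y-z_i)=\prod_j(y^v-\omega_j^v)$, reading off $e_j(Q)=0$ for $v\nmid j$. You instead compare coefficients directly in the identity $\prod_i(y-\epsilon z_i)=\epsilon^n P(\epsilon^{-1}y)=P(y)$, obtaining $e_j(Q)(\epsilon^j-1)=0$ for all $j$, which yields the vanishing statement at once and, taking $j=n$ and using $e_n(Q)\neq 0$ (all coordinates are $d^{\rm th}$ roots of unity), also yields $v\mid n$. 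Both routes are sound; yours is a bit more uniform, since the two directions become the same coefficient-level manipulation and no reordering of coordinates or explicit orbit parametrization is needed, while the paper's orbit picture is the one reused later (e.g.\ in Lemma \ref{p-symmetric}), so it earns its keep there. Your side remarks (that $v\mid d$ keeps $\epsilon z_i$ among $d^{\rm th}$ roots of unity, and that equality of monic polynomials is equivalent to equality of root multisets) are exactly the points that need saying.
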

  
\begin{proof}
  First suppose that $Q$ is $v$-symmetric.  The coordinates of $Q$
  split into orbits under the cyclic group of order $v$ acting on the
  complex plane by rotation.  Since $Q \in \V_d$, we have $z_i \neq 0$
  for all $i$. Therefore all the above orbits have cardinality $v$ and
  $v\mid n$.

  Since $Q$ is $p$-symmetric, there is a primitive $v^{\rm th}$ root
  of unity $\epsilon$ such that, up to reordering, we may write
  $z_{jv+i}= \epsilon^i \omega_j$ for $1 \leq i \leq v$,
  $1 \leq j \leq n/v$, and for some $d^{\rm th}$ roots of unity
  $\omega_j$.  Using the auxiliary variable $y$, we have
  \begin{align*}
    \sum_{j=1}^n (-1)^{j} e_j(Q) y^j
    & = \prod_{i=1}^n(y-z_i) = \prod_{j=1}^{n/v} \prod_{i=1}^v (y-\epsilon^i \omega_j) = 
    \prod_{j=1}^{n/v} \omega_j^v \prod_{i=1}^v (y/\omega_j-\epsilon^i)\\
    &= \prod_{j=1}^{n/v} \omega_j^v \left( (y/\omega_j)^v - 1\right) = \prod_{j=1}^{n/v} (y^v - \omega_j^v).
  \end{align*}
  Thus $e_j(Q)=0$ whenever $v\nmid j$.
    
  Conversely, suppose that $v\mid n$, $Q\in\V_d$ and $e_j(Q)=0$
  whenever $j\nmid v$.  We have
  $$\prod_{i=1}^n(y-z_i) = \sum_{j=1}^n (-1)^{j} e_j(Q) y^j = f(y^v),$$
  where $f$ is a polynomial in one variable.  At the same time
  $$\prod_{i=1}^n(y-\epsilon z_i) = \epsilon^n \prod_{i=1}^n
  (y/\epsilon - z_i) = \epsilon^n f((y/\epsilon)^v) = f(y^v).$$
  Therefore, comparing factors, we deduce that $Q$ is symmetric.
\end{proof}
  
\begin{lemma}\label{p-symmetric}
  Suppose $Q=(z_1,z_2,\dots,z_n) \in \V_d$ is $v^m$-symmetric and
  $(z_1^{v^m},z_2^{v^m},\dots,z_n^{v^m})$ is $v$-symmetric.  Then $Q$
  is $v^{m+1}$-symmetric.
\end{lemma}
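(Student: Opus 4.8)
\emph{Plan.} The plan is to translate everything, via Lemma~\ref{is symmetric}, into a statement about the single-variable polynomial
$P(y):=\prod_{i=1}^n(y-z_i)=\sum_{j=0}^n(-1)^je_j(Q)\,y^{\,n-j}$.
Since $Q\in\V_d$ is $v^m$-symmetric, Lemma~\ref{is symmetric} gives $v^m\mid n$ together with $e_j(Q)=0$ for every $j$ not divisible by $v^m$; as $v^m\mid n$, the exponents $n-j$ of the surviving terms are all multiples of $v^m$, so $P(y)=g(y^{v^m})$ for a monic polynomial $g\in\CC[y]$ of degree $n/v^m$. What I want to prove is that $v^{m+1}\mid n$ and that $g$ is a polynomial in $y^v$: then $P(y)$ is a polynomial in $y^{v^{m+1}}$, hence $e_j(Q)=0$ whenever $v^{m+1}\nmid j$, and Lemma~\ref{is symmetric} applied with $v^{m+1}$ in place of $v$ gives exactly that $Q$ is $v^{m+1}$-symmetric.

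\emph{Main step.} For the main step I would first record that, since $P(y)=g(y^{v^m})$, a short computation shows $\prod_{i=1}^n(y-z_i^{v^m})=g(y)^{v^m}$ (each root of $g$ of multiplicity $\mu$ has $v^m$ distinct $v^m$-th roots, each occurring with multiplicity $\mu$ among the $z_i$). Now I bring in the second hypothesis. If $\eta$ is a primitive $v$-th root of unity, $v$-symmetry of $Q'':=(z_1^{v^m},\dots,z_n^{v^m})$ furnishes $\tau\in\SymGp$ with $\eta z_i^{v^m}=z_{\tau(i)}^{v^m}$ for all $i$, and applying $\prod_i(y-\,\cdot\,)$ to both sides yields $\prod_i(y-\eta z_i^{v^m})=\prod_i(y-z_i^{v^m})$, that is, $\bigl(\eta^{\,n/v^m}g(y/\eta)\bigr)^{v^m}=g(y)^{v^m}$. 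Both sides are $v^m$-th powers of monic polynomials, and over $\CC$ a monic polynomial has a unique monic $v^m$-th root, so $\eta^{\,n/v^m}g(y/\eta)=g(y)$. This identity means precisely that the multiset of roots of $g$ is invariant under multiplication by $\eta$; since those roots are nonzero (being $v^m$-th powers of the $z_i$), they fall into orbits of size $v$ under multiplication by $\eta$. Therefore $v\mid\deg g=n/v^m$, so $v^{m+1}\mid n$, and grouping the roots into these orbits, say with representatives $w_1,\dots,w_{n/v^{m+1}}$, gives $g(y)=\prod_{k=1}^{n/v^{m+1}}\prod_{i=0}^{v-1}(y-\eta^iw_k)=\prod_{k=1}^{n/v^{m+1}}(y^v-w_k^v)$, a polynomial in $y^v$. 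By the first paragraph this finishes the proof.

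\emph{Expected obstacle.} The hard part will be the passage from $\bigl(\eta^{\,n/v^m}g(y/\eta)\bigr)^{v^m}=g(y)^{v^m}$ to $\eta^{\,n/v^m}g(y/\eta)=g(y)$, and more precisely avoiding the tempting but wrong shortcut that ``$g^{v^m}$ is a polynomial in $y^v$'' forces ``$g$ is a polynomial in $y^v$'' (this fails when $v$ is not a prime power: for instance with $v=4$, $m=1$, $g(y)=y^2$ has $g^{4}=(y^4)^2$ a polynomial in $y^4$, yet $g$ is not). What works is that the rotation symmetry $\eta^{\deg G}G(y/\eta)=G(y)$ enjoyed by $G:=g^{v^m}$ does descend to the monic $v^m$-th root $g$, by uniqueness of monic roots in $\CC[y]$; and it is this symmetry of $g$, via the argument already used to prove Lemma~\ref{is symmetric}, that makes $g$ a polynomial in $y^v$. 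The remaining points are routine bookkeeping: one needs $Q''\in\V_d$ for ``$v$-symmetric'' to be meaningful for it (clear, since $z_i^d=1$ and $z_n=1$), and one uses that the roots of $g$ are nonzero so that their $\eta$-orbits all have full size $v$.
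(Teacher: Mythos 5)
Your proof is correct and follows essentially the same route as the paper: encode the symmetries in the polynomials $\prod_i(y-z_i)$ and $\prod_i(y-z_i^{v^m})$, use Lemma~\ref{is symmetric} in both directions, and reconcile the identity $g(y)^{v^m}=f(y^v)$ to conclude that $g$ is itself a polynomial in $y^v$. In fact you supply the justification (uniqueness of monic $v^m$-th roots giving $\eta^{n/v^m}g(y/\eta)=g(y)$, plus nonvanishing of the roots so the $\eta$-orbits have full size $v$) for the step the paper only asserts with ``the only way to reconcile,'' so your write-up is, if anything, more complete on the one delicate point.
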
 
  
\begin{proof}
  Proceeding as in the proof of Lemma \ref{is symmetric}, $Q$ being
  $v^m$-symmetric implies the existence of a primitive
  $(v^m)^{\rm th}$ root of unity $\epsilon$ such that, up to
  reordering, we may write $z_{jv^m+i}= \epsilon^i \omega_j$ for
  $1 \leq i \leq v^m$, $1 \leq j \leq n/v^m$, and for some $d^{\rm th}$
  roots of unity $\omega_j$.  Using the auxiliary variable $y$, we
  have
  \begin{equation}\label{eq:5}
    \prod_{i=1}^n (y-z_i^{v^m}) =
    \prod_{j=1}^{n/v^m} \prod_{i=1}^{v^m} (y-\omega_j^{v^m})=
    \prod_{j=1}^{n/v^m} (y-\omega_j^{v^m})^{v^m}=
    \left( \prod_{j=1}^{n/v^m} (y-\omega_j^{v^m}) \right)^{v^m}.
  \end{equation}
  Since $(z_1^{v^m},z_2^{v^m},\dots,z_n^{v^m})$ is $v$-symmetric,
  Lemma \ref{is symmetric} implies
  \begin{equation}
    \label{eq:6}
    \prod_{i=1}^n (y-z_i^{v^m}) = f(y^v),
  \end{equation}
  for some polynomial $f$ in one variable. To only way to reconcile
  equations \eqref{eq:5} and \eqref{eq:6} is if
  \begin{equation*}
    \label{eq:7}
    \prod_{j=1}^{n/v^m} (y-\omega_j^{v^m}) = g(y^v),
  \end{equation*}
  for some polynomial $g$ in one variable.
  Therefore we must have
  \begin{equation*}
    \begin{split}
      \prod_{i=1}^n (y-z_i)
      &=\prod_{j=1}^{n/v^m} \prod_{i=1}^{v^m} (y-z_{jv^m+i})
      =\prod_{j=1}^{n/v^m} \prod_{i=1}^{v^m} (y-\epsilon^i \omega_j)=\\
      &=\prod_{j=1}^{n/v^m} (y^{v^m}- \omega_j^{v^m}) =
      g \left( (y^{v^m})^v \right) = g (y^{v^{m+1}}).
    \end{split}
  \end{equation*}
  Using Lemma \ref{is symmetric} again, we conclude that $Q$ is
  $v^{m+1}$-symmetric.
\end{proof}

\begin{prop}\label{symmetric case}
  Let $p$ be prime and suppose that all points
  $Q\in \V_d \subseteq \AA^n$ with $\ps_1 (Q)=0$ are $p$-symmetric.
  Let $g:=\gcd (d,n)$ and assume $p\mid g$.  Then $(n,d,a)$ is \bad{}
  if and only if $g\nmid a$.
\end{prop}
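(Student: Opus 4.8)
The "only if" direction is immediate from Proposition~\ref{two proofs}: if $g \nmid a$ then $(n,d,a)$ is \bad{} regardless of the hypothesis on $p$-symmetric points. So the work is entirely in the converse: assuming $g \mid a$, I want to produce a symmetric polynomial $f$ of degree $a$ not vanishing on $\V_d$, i.e., show $(n,d,a)$ is \good. By Lemma~\ref{power sums suffice} it suffices to show that no $Q \in \V_d$ kills every $\ps_\lambda$ with $\lambda \vdash a$; by Remark~\ref{Galois remark}, the points $Q \in \V_d$ on which some part coprime to $d$ might still help are precisely those with $\ps_1(Q) = 0$, and for those the hypothesis tells us $Q$ is $p$-symmetric.

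The key idea is to boost $p$-symmetry up to the full $p$-adic valuation of $g$ and then use Lemma~\ref{is symmetric} to pin down which $e_j(Q)$ vanish. First I would split $g = \gcd(d,n)$ as $g = p^m g'$ with $p \nmid g'$, and similarly write $d = p^c d''$ with $p \nmid d''$, so $m = \min(c, v_p(n))$. The plan is to iterate Lemma~\ref{p-symmetric}: starting from the hypothesis that every $Q \in \V_d$ with $\ps_1(Q) = 0$ is $p$-symmetric, I need to check at each stage that $(z_1^{p^k},\dots,z_n^{p^k})$ is again $p$-symmetric so that Lemma~\ref{p-symmetric} upgrades $p^k$-symmetry to $p^{k+1}$-symmetry. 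For this I should verify that $(z_1^{p^k},\dots,z_n^{p^k})$ lies in $\V_{d/\gcd(d,p^k)} = \V_{d/p^{\min(c,k)}}$ with power sum $\ps_1$ equal to $\ps_{p^k}(Q)$, and that $\ps_{p^k}(Q) = 0$ whenever $\ps_1(Q)=0$ — the latter follows from Remark~\ref{Galois remark} as long as $\gcd(p^k,d)$ behaves correctly, but more carefully: once $Q$ is $p$-symmetric, Lemma~\ref{is symmetric} forces $\ps_1(Q)=0$ to propagate. Actually the cleanest route: $p$-symmetry of $Q$ combined with the rotation structure directly gives that the $p^k$-th powers still exhibit the required symmetry, so I expect an induction showing every such $Q$ is $p^m$-symmetric. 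By Lemma~\ref{is symmetric} this yields $e_j(Q) = 0$ for all $j$ with $p^m \nmid j$.

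Now I combine this with the constraint already extracted in the proof of Proposition~\ref{two proofs}: the point $Q_0 = (\omega,\omega^2,\dots,\omega^n)$ for $\omega$ a primitive $g$-th root of unity shows $e_j(Q) = 0$ whenever $g \nmid j$ — but that's a specific point, not all of them. What I really want is: for an arbitrary $Q \in \V_d$ with $\ps_1(Q) = 0$, the symmetric polynomials potentially not vanishing at $Q$ all have degree divisible by... hmm, only $p^m$, not all of $g$. So $p$-symmetry alone handles the $p$-part of $g$. To handle the rest I would repeat the argument prime-by-prime is not available — the hypothesis only concerns $p$. So I should instead argue directly: a symmetric polynomial $f$ of degree $a$ with $g \mid a$ that does not vanish on $\V_d$ exists because we can take $f = \ps_d^s e_n^t$ whenever $a = sd + tn$ has a nonnegative integer solution; in general such a solution need not exist, so instead — recalling $p^m \mid a$ (since $p^m \mid g \mid a$) — I want $f$ built from $e_n$ (degree $n$, $p$-part $v_p(n) \geq m$) and $\ps_d$ (degree $d$, $p$-part $c \geq m$) and more flexibly any $\ps_\lambda$ whose parts are multiples of... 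Actually the honest approach: the points of $\V_d$ with $\ps_1(Q)=0$ being $p^m$-symmetric means (Lemma~\ref{is symmetric}) $\prod(y-z_i) = h(y^{p^m})$, so $Q$ lies in the image of the power map, and $\ps_{p^m}(Q) = (\text{something nonzero})$ — specifically the sum of the $p^m$-th powers is $p^m \cdot (\text{sum over representatives})$, which... need not be nonzero either.

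Let me reconsider: the right target is surely $f$ a product of power sums $\ps_\lambda$ where each part lies in a set $S'$ of integers guaranteed not to produce vanishing. For the bad points $\ps_1(Q) = 0$, if a part $\lambda_i$ is a multiple of $d$ then $\ps_{\lambda_i}(Q) = n \ne 0$; if $\lambda_i = n$ we can instead use $e_n$. So the real claim to verify is that $g \mid a$ forces $a$ into the numerical semigroup generated by $\{d, n\}$ together with the multiples-of-$g$ we can access — but Sylvester's bound (as used in Proposition~\ref{a_bigger_than}) only gives this for $a$ large. The resolution must be that $p$-symmetry lets us also use $\ps_{p^m}$ or similar small-degree building blocks. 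The \textbf{main obstacle}, as I see it, is precisely identifying the correct generating set of "safe" degrees: I expect the proof to show that for any $Q \in \V_d$ with $\ps_1(Q) = 0$, $p$-symmetry (upgraded to $p^m$-symmetry) together with Remark~\ref{Galois remark} implies every symmetric polynomial not vanishing at $Q$ has degree divisible by $g$ (not just $p^m$ — the $g'$ part coming from the structure of which power sums with parts coprime to $d$ can avoid vanishing, via the Galois argument), and hence conversely that $g \mid a$ already suffices to find a suitable $\ps_\lambda$, completing the proof via Lemma~\ref{power sums suffice}.
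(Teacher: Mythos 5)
Your reduction of the problem is on the right track: the ``only if'' direction via Proposition~\ref{two proofs} is exactly what the paper does, and you correctly see that, assuming $g\mid a$, it suffices (by Corollary~\ref{single Q}) to exhibit, for each potential witness point, a symmetric polynomial of degree $a$ not vanishing there, and that only points $Q\in\V_d$ with $\ps_1(Q)=0$ matter (otherwise $\ps_1^a$ works). But the proposal never closes the argument, and the step you flag as the main obstacle is precisely the missing idea. Your plan to iterate Lemma~\ref{p-symmetric} upward to show every such $Q$ is $p^{m}$-symmetric with $m=\min(v_p(n),v_p(d))$ does not work: each application of Lemma~\ref{p-symmetric} requires that the relevant power sum $\ps_{p^j}(Q)$ vanish (so that the hypothesis applies to $(z_1^{p^j},\dots,z_n^{p^j})$), and this is not guaranteed. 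The paper turns this obstruction into the proof: let $m$ be \emph{maximal} with $Q$ being $p^m$-symmetric (such an $m\leq k:=\min(v_p(n),v_p(d))$ exists, since $v$-symmetry forces $v\mid n$ and $v\mid d$). If $\ps_{p^m}(Q)=0$, then $(z_1^{p^m},\dots,z_n^{p^m})\in\V_d$ has $\ps_1=0$, hence is $p$-symmetric by hypothesis, and Lemma~\ref{p-symmetric} would make $Q$ $p^{m+1}$-symmetric, contradicting maximality; therefore $\ps_{p^m}(Q)\neq 0$. Since $p^m\mid p^k\mid g\mid a$, the polynomial $\ps_{p^m}^{\,a/p^m}$ is symmetric of degree $a$ and does not vanish at $Q$, and goodness follows from Corollary~\ref{single Q} (the polynomial is allowed to depend on $Q$).

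Two further points. First, your worry about which variety $(z_1^{p^k},\dots,z_n^{p^k})$ lives in is a non-issue: its coordinates are still $d^{\rm th}$ roots of unity and the last coordinate is still $1$, so it lies in $\V_d$ and the hypothesis applies directly. Second, the claim you fall back on at the end --- that every symmetric polynomial not vanishing at such a $Q$ has degree divisible by $g$, from which $g\mid a$ would ``conversely'' suffice --- is both unproven and logically insufficient: a necessary divisibility condition on the degrees of nonvanishing polynomials does not produce the required nonvanishing polynomial of degree $a$, and in fact the paper's argument only ever uses divisibility by $p^m$ (through $p^m\mid g\mid a$), never by all of $g$.
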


\begin{proof}
  If $g\nmid a$, then $(n,d,a)$ is \bad{} by Proposition \ref{two
    proofs}.

  We prove the other implication by contradiction, so suppose that
  $g\mid a$.  Let $n=p^r n'$, $d=p^s d'$ and $a = p^t a'$, where
  $\gcd(p,n')=\gcd(p,d')=\gcd(p,a')=1$.  Set $k = \min\{r,s\}$. Since
  $p^k \mid g$, the condition $g\mid a$ implies $p^k \mid a$ and
  therefore $k\leq t$.

  The hypothesis $p\mid g$ implies $s\geq 1$; hence $p\in \Gamma(d)$.
  At the same time, $p\mid g$ also implies $r\geq 1$; hence
  $n\in \Gamma(d)$.  Thus, by Theorem \ref{LL}, there exists
  $Q\in \V_d \subseteq \AA^n$ such that $\ps_1 (Q) =0$.  By the
  hypothesis, $Q$ is $p$-symmetric. However, $Q$ is not
  $p^{k+1}$-symmetric because either $p^{k+1} \nmid n$ or
  $p^{k+1} \nmid d$. Therefore there is an integer $m$, with
  $1\leq m\leq k$, such that $Q$ is $p^m$-symmetric but not
  $p^{m+1}$-symmetric.

  Now suppose that $\ps_{p^m} (Q) = 0$. Then we would have
  $$\ps_1 (z_1^{p^m},z_2^{p^m},\dots,z_n^{p^m}) = \ps_{p^m} (Q) = 0.$$
  Our hypothesis would imply that
  $(z_1^{p^m},z_2^{p^m},\dots,z_n^{p^m})$ is $p$-symmetric. However,
  Lemma \ref{p-symmetric} would give that $Q$ is $p^{m+1}$-symmetric,
  contradicting our choice of $m$. Therefore $\ps_{p^m} (Q) \neq 0$.
  Thus the homogeneous polynomial $(\ps_{p^m})^{a'p^{t-m}}$ has degree
  $a$ and does not vanish at $Q$.  We conclude that $(n,d,a)$ is
  \good{} by Corollary \ref{single Q}.
\end{proof}

In \cite{vanishing sums}, Lam and Leung consider sequences
$z_1,z_2,\dots,z_n$ with each $z_i$ a $d^{\rm th}$ root of unity and
whose sum is 0, in particular, points
$Q = (z_1,z_2,\dots,z_n) \in \V_d$ such that $\ps_1 (Q) =0$.
Corollary~3.4 of \cite{vanishing sums} shows that if $d=p^r$ is a
prime power, then $Q$ must be $p$-symmetric.  This yields the
following corollary of Proposition~\ref{symmetric case}.
\begin{cor}
  \label{prime power}
  Suppose $d=p^s$ for some prime $p$ and positive integer $s$.  Let
  $g:=\gcd(d,n)$. Then $(n,d,a)$ is \bad{} if and only if $g\nmid a$.
\end{cor}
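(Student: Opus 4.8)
The plan is to deduce Corollary~\ref{prime power} directly from Proposition~\ref{symmetric case} together with the cited result of Lam and Leung. First I would observe that the hypothesis of Proposition~\ref{symmetric case} requires a prime $p$ such that every $Q \in \V_d$ with $\ps_1(Q) = 0$ is $p$-symmetric, and that $p \mid g = \gcd(d,n)$; under those hypotheses the conclusion ``$(n,d,a)$ is \bad{} iff $g \nmid a$'' is exactly what we want. So the whole task reduces to supplying the prime $p$ and verifying the $p$-symmetry hypothesis when $d = p^s$.

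Here is how I would carry it out. Write $d = p^s$. If $p \nmid n$, then $g = \gcd(p^s, n) = 1$, so $g \mid a$ trivially; I would then need to show $(n,d,a)$ is \good{} for every $a$. This follows because $p \nmid n$ means $n \notin \Gamma(d) = \Gamma(p^s)$ (the semigroup generated by $p$, i.e.\ the nonnegative multiples of $p$ together with $0$ — more precisely $\Gamma(p^s)$ consists of $0$ and the multiples of $p$; since $p \nmid n$ we have $n \notin \Gamma(p^s)$), so Proposition~\ref{n_notin_Gamma} (second assertion) gives that $(n,d,a)$ is \good{} for every $a$, matching the claim since $g = 1 \mid a$. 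The remaining case is $p \mid n$, so that $p \mid g$. Here I invoke \cite[Cor.~3.4]{vanishing sums} as quoted just above the corollary: when $d = p^s$ is a prime power, any point $Q = (z_1,\dots,z_n) \in \V_d$ with $\ps_1(Q) = 0$ is $p$-symmetric. This is precisely the hypothesis of Proposition~\ref{symmetric case} with this $p$, and we have just checked $p \mid g$. Therefore Proposition~\ref{symmetric case} applies and yields that $(n,d,a)$ is \bad{} if and only if $g \nmid a$, completing the proof.

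I do not anticipate a serious obstacle; the only point requiring a little care is the bookkeeping about $\Gamma(p^s)$ and the split into the cases $p \mid n$ and $p \nmid n$, since Proposition~\ref{symmetric case} is stated only under the assumption $p \mid g$. If one prefers to avoid the case split, an alternative is to note that the statement of Corollary~\ref{prime power} is vacuous content-wise when $g = 1$ — both sides of ``$(n,d,a)$ is \bad{} iff $g \nmid a$'' then assert that $(n,d,a)$ is \good, which must be established separately via Proposition~\ref{n_notin_Gamma} — so the case split seems unavoidable and should simply be stated cleanly. The heart of the argument is entirely contained in the Lam--Leung prime-power result and in Proposition~\ref{symmetric case}; this corollary is just the act of noting that their hypotheses are met.
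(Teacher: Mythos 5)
Your proposal is correct and follows essentially the same route as the paper: the case $p\mid n$ (equivalently $p\mid g$) is handled by combining the Lam--Leung prime-power result with Proposition~\ref{symmetric case}, and the case $p\nmid n$ (so $g=1$) is handled via $n\notin\Gamma(d)=\langle p\rangle$ and Proposition~\ref{n_notin_Gamma}. No gaps; the bookkeeping about $\Gamma(p^s)$ and the case split is exactly what the paper does.
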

\begin{proof}
  If $p\mid g$, then the result follows from \cite[Cor.~3.4]{vanishing
    sums} and Proposition~\ref{symmetric case}.

  Assume $p\nmid g$. In this case, $g=1\mid a$ so we must show that
  $(n,d,a)$ is \good.  Note that $p\nmid g$ implies $p\nmid n$. Hence
  $n\notin \Gamma (d) = \langle p \rangle$.  Therefore $(n,d,a)$ is
  \good{} by Proposition \ref{n_notin_Gamma}.
\end{proof}
      
Lam and Leung also showed that if $(z_1,z_2,\dots,z_n)$ is not
$p$-symmetric for all primes $p$ dividing $d$, then
$n \geq p_1(p_2-1) + p_3 - p_2$, where $p_1 < p_2 <p_3$ are the three
smallest primes dividing $d$ \cite[Thm.~4.8]{vanishing sums}.  This
yields the following corollary of Proposition~\ref{symmetric case}.

\begin{cor}\label{essentially}
  Suppose that at least two distinct primes divide $d$ and that
  $n < p + q$ where $p$ and $q$ are the smallest two distinct primes
  dividing $d$. Let $g:=\gcd(d,n)$.  Then $(n,d,a)$ is \bad{} if and
  only if $g\nmid a$.
\end{cor}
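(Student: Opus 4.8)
The plan is to follow exactly the template of the proof of Corollary \ref{prime power}, splitting into the cases $p \mid g$ and $p \nmid g$, where as usual $g = \gcd(d,n)$ and $p$ denotes the smallest prime dividing $d$. The "if" direction is free: if $g \nmid a$, then $(n,d,a)$ is \bad{} by Proposition \ref{two proofs}, with no hypothesis needed. So the whole content is the converse, and the goal is to invoke Proposition \ref{symmetric case}, whose hypothesis is precisely that every $Q \in \V_d$ with $\ps_1(Q) = 0$ is $p$-symmetric.

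First I would handle the case $p \mid g$. Here the key input is the Lam--Leung result \cite[Thm.~4.8]{vanishing sums} quoted just before the statement: if a vanishing sum of $d^{\rm th}$ roots of unity $z_1,\dots,z_n$ were \emph{not} $p$-symmetric for some prime $p \mid d$, then in particular it fails to be $p'$-symmetric for every prime $p' \mid d$, whence $n \geq p_1(p_2 - 1) + p_3 - p_2$ where $p_1 < p_2 < p_3$ are the smallest three primes dividing $d$. I would note that under our hypothesis there may be only two primes dividing $d$, in which case I take the convention that there is no $p_3$ and the relevant bound degenerates (Lam--Leung's statement still gives $n \geq p_1(p_2-1)$ in that case — this is the point I need to double-check against the precise form of their theorem, and I would cite the two-prime case of \cite[Thm.~4.8]{vanishing sums} directly). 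In either case, $n \geq p(q-1) \geq p + q - 1 \cdot$ is not quite what I want; rather, from $n < p + q$ I need to derive a contradiction with $n \geq p_1(p_2-1) + (p_3 - p_2)$. Since $p_1 = p$, $p_2 = q$ are the two smallest, $p_1(p_2-1) = p(q-1) \geq p + q - 2 \geq n - 1$ would not immediately contradict $n < p+q$; I must instead observe that $p(q-1) + (p_3 - p_2) \geq 2(q-1) + 1 = 2q - 1 \geq p + q$ once $q \geq p \geq 2$ and $q \geq 3$, or handle $p = 2, q = 3$ by hand. The cleanest route: $p(q-1) \geq 2(q-1) = q + (q-2) \geq q + (p-2)$, so $n \geq p(q-1) \geq p + q - 2$ is too weak, so I genuinely need the extra $p_3 - p_2 \geq 1$ term (when a third prime exists) and, when it doesn't, the sharper two-prime bound. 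This bookkeeping — matching the exact inequality in \cite[Thm.~4.8]{vanishing sums} to the hypothesis $n < p+q$ — is the main obstacle. Once the contradiction is in hand, every $Q \in \V_d$ with $\ps_1(Q) = 0$ is $p$-symmetric, so Proposition \ref{symmetric case} applies and gives $(n,d,a)$ \good{} whenever $g \mid a$.

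Finally I would dispose of the case $p \nmid g$. Then $g = \gcd(d,n)$ is not divisible by $p$; I claim $g = 1$. Indeed, suppose a prime $\ell$ divides $g$; then $\ell \mid d$, so $\ell \geq p$, but also $\ell \mid n$, and since $\ell \mid d$ we have $\ell \in \{p, q, \dots\}$, forcing $\ell \geq p$. This alone does not force $g = 1$, so instead I argue directly: if $p \nmid g$ then $p \nmid n$; but $\Gamma(d)$ is generated by the prime divisors of $d$, the smallest of which is $p$, so every element of $\Gamma(d)$ that is positive is $\geq p$, while an element in $\Gamma(d)$ divisible only by primes $\geq p$ — actually the cleanest statement is: $n < p + q \leq 2p \cdot$? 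No: I simply note $n < p + q$ and $p \nmid n$ together with $n \in \Gamma(d)$ would force $n$ to be a sum of primes all $\geq p$ avoiding the value $p$ itself, hence $n \geq q \geq p$, and if $n \geq p$ then since $n \neq p$ (as $p \nmid n$) and $n < p + q$ we'd need $n$ expressible using the primes dividing $d$ — but any such expression not using $p$ needs $n \geq q$, and using $q$ once gives $n = q$ only if $n = q$, else $n \geq q + p > n$ or $n \geq 2q \geq p + q > n$, contradiction unless $n = q$; and $n = q$ with $p \nmid n$ is fine but then $g = \gcd(d,q)$; hmm. To avoid this tangle I would instead just mimic Corollary \ref{prime power} verbatim: $p \nmid g \Rightarrow p \nmid n \Rightarrow n \notin \Gamma(d)$ provided $\Gamma(d) = \langle p \rangle$, which holds only if $d$ is a prime power — not our setting. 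The correct clean argument: if $p \nmid n$ then there is no $Q \in \V_d$ with $\ps_1(Q) = 0$ that is \emph{not} $p$-symmetric would still need Thm.~4.8; but more simply, $p \nmid g$ gives $g \mid a$ automatically only when $g = 1$. So I would show $g = 1$: any prime $\ell \mid g$ satisfies $\ell \mid d$ hence $\ell \geq p$ and $\ell \mid n$ hence $\ell \leq n < p + q \leq 2q$, and if $\ell \geq q$ then $2\ell \geq 2q > n \geq \ell$ forces... this shows $n < 2\ell$ so $\ell$ divides $n$ at most once, fine but doesn't kill it. I will therefore fall back on: in the case $p \nmid g$, apply Proposition \ref{n_notin_Gamma} after checking $n \notin \Gamma(d/\gcd(a,d))$ — or simply observe that since $g = \gcd(d,n)$ and $p \nmid g$, and since $n < p + q$ means $n$ cannot be written using two or more prime divisors of $d$ and cannot equal $p$, we get $n \in \Gamma(d) \iff n = q^j$ for some $j$, which combined with $n < p+q < 2q$ forces $n = q$ or $n = 1$; in the subcase $n \notin \Gamma(d)$ Proposition \ref{n_notin_Gamma} finishes it, and the remaining tiny subcases ($n = q$, etc.) are checked directly using $e_n$ or $\ps_d$, which never vanish on $\V_d$. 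I expect this final case-chase to be short but fiddly; the real work is the inequality juggling in the $p \mid g$ case described above.
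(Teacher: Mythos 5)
There is a genuine gap, and it sits exactly where the real work is. The paper's proof does \emph{not} show that every $Q\in\V_d$ with $\ps_1(Q)=0$ is $p$-symmetric for the smallest prime $p$ and it does not split on $p\mid g$ versus $p\nmid g$; instead it partitions the coordinates of an arbitrary such $Q$ into \emph{minimal} vanishing subsets, notes (via \cite[Thm.~4.8]{vanishing sums} and Lemma \ref{is symmetric}) that each minimal subset is $v$-symmetric of prime cardinality $v\mid d$, and then uses $n<p+q$ to conclude that either every part has size $p$ (so $n=tp$, $p\mid g$, and $Q$ is $p$-symmetric) or there is a single part of size some prime $q_i$ (so $n=q_i$, $q_i\mid g$, and $Q$ is $q_i$-symmetric); Proposition \ref{symmetric case} is then applied with whichever prime occurs. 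Your central step in the case $p\mid g$ rests on the reversed quantifier ``not $p$-symmetric for \emph{some} prime $\Rightarrow$ not $p'$-symmetric for \emph{every} prime,'' which is false (for $d=15$ the regular $5$-gon is $5$-symmetric but not $3$-symmetric); the correct reading of Lam--Leung only yields that $Q$ is $v$-symmetric for \emph{some} prime $v\mid d$, and you never supply the step forcing $v=p$ (it is available: $v\mid n$ by Lemma \ref{is symmetric}, $p\mid n$, and $v\neq p$ would give $pv\mid n$, hence $n\geq pq\geq p+q$, a contradiction). Worse, the Lam--Leung bound is a statement about minimal vanishing sums, so applying it directly to the whole, possibly non-minimal, point $Q$ is not justified --- this is precisely why the paper decomposes into minimal subsets. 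Finally, your fallback ``degenerate two-prime bound $n\geq p(q-1)$'' fails, by your own computation, at $(p,q)=(2,3)$, $n=4$, and ``handle by hand'' is not an argument; here one needs the two-prime structure result that every vanishing sum of $d$-th roots of unity with $d=2^a3^b$ is a union of rotated $2$- and $3$-gons (equivalently, the minimal-subset decomposition again).

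The case $p\nmid g$ in your proposal can be completed, but as written it trails off. What is needed is: if $n\notin\Gamma(d)$ then in fact $g=1$ (any prime $\ell\mid g$ with $\ell\neq p$ satisfies $\ell\mid n<p+q<2\ell$, forcing $n=\ell\in\Gamma(d)$), so Proposition \ref{n_notin_Gamma} really does finish that subcase; and if $n\in\Gamma(d)$ with $p\nmid n$, then $n$ is a single prime $q_i$ dividing $d$ (not necessarily $q$), so $g=n$, and for $g\mid a$ the polynomial $e_n^{a/n}$, which never vanishes on $\V_d$, shows the triple is \good{} --- your ``$n\in\Gamma(d)\iff n=q^j$'' characterization is wrong, and you never record that $g=n$, which is what makes the $e_n$ trick cover \emph{all} $a$ divisible by $g$ rather than only degrees in the semigroup generated by $n$ and $d$. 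So the skeleton of your case split is salvageable, but the key idea of the paper --- the decomposition into minimal vanishing subsets together with the counting argument ``all parts $p$, or one part $q_i$'' --- is missing, and without it the $p\mid g$ case is not proved.
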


\begin{proof}
  Let $d = p^s \prod_{i=1}^m q_i^{s_i}$ be the prime factorization of
  $d$, where $p < q_1 < q_2 < \dots < q_m$.  Suppose that
  $Q = (z_1,z_2,\dots,z_n) \in \V_d$ is such that $\ps_1(Q)=0$.  Since
  $$p(q_1-1) + q_2-q_1 \geq 2(q_1-1) + q_2 - q_1 = (q_2-1) + q_1 > p + q_1 > n,$$
  \cite[Thm.~4.8]{vanishing sums} implies that every non-empty
  minimal subset $I \subset \{1,2,\dots,n\}$ such that
  $\sum_{i \in I} z_i = 0$ corresponds to a $v$-symmetric point
  $(z_i : i \in I)$, where $v$ is a prime dividing $d$.  Moreover, $v$
  divides the cardinality of $I$.
  Clearly, we may partition $\{1,2,\dots,n\}$ into a disjoint union
  $I_1 \sqcup I_2 \sqcup \dots \sqcup I_t$ of such minimal subsets.
  Thus $n = \# I_1 + \# I_2 + \dots + \# I_t$.  Since the cardinality
  of each $I_j$ is either $p$ or some $q_i$, the hypothesis $n<p+q_1$
  implies we must have either $t=1$ and $n= \# I_1 = q_i$ for some
  $i$, or else $\# I_j =p$ for all $j$ and $n=tp$.

  Thus there are two possibilities: either $n=q_i$ for some $q_i$, or
  else $n=pt$.  In the former case, $q_i \mid g$ and every
  $Q \in \V_d$ with $\ps_1(Q)=0$ is $q_i$-symmetric.  In the latter
  case, $p\mid g$ and every $Q \in \V_d$ with $\ps_1(Q)=0$ is
  $p$-symmetric.  Thus the hypotheses of Proposition~\ref{symmetric
    case} are satisfied (either with the prime $q_i$ or with $p$).
\end{proof}

\subsection{Generating good and bad triples}
\label{sec:generating-triples}  

We illustrate how to obtain more \good{} and \bad{} triples from the
ones already at our disposal.

\begin{prop}\label{inc_bad}
  Let $k$ be a positive integer.
  \begin{enumerate}
  \item\label{inc1} If $(n,d,a)$ is \bad, then $(n,kd,a)$ is also
    \bad.
  \item\label{inc2} If $(n,d,a)$ is \bad, then $(kn,d,a)$ is also
    \bad.
  \item\label{inc5} If $(n,d,a)$ is \bad, then $(kn,kd,ka)$ is also
    \bad.
  \end{enumerate}  
\end{prop}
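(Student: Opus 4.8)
The plan is to exploit the characterization of bad triples via points of $\V_d$ on which all symmetric polynomials of degree $a$ vanish (Corollary~\ref{single Q}), or equivalently, via points on which all power sums $\ps_\lambda$ with $\lambda \vdash a$ vanish (Lemma~\ref{power sums suffice}). In each of the three parts, I will start from a witnessing point for the badness of $(n,d,a)$ and manufacture a witnessing point for the badness of the new triple.

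For part~(\ref{inc1}): suppose $(n,d,a)$ is bad, so by Lemma~\ref{power sums suffice} there is a point $Q=(z_1,\dots,z_n)\in\V_d$ with $\ps_\lambda(Q)=0$ for every $\lambda\vdash a$. Since every $d^{\rm th}$ root of unity is also a $(kd)^{\rm th}$ root of unity, the very same point $Q$ lies in $\V_{kd}$, and the vanishing of all $\ps_\lambda(Q)$ is unchanged. Hence $(n,kd,a)$ is bad by Lemma~\ref{power sums suffice}.

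For part~(\ref{inc2}): again take $Q=(z_1,\dots,z_n)\in\V_d$ with $\ps_\lambda(Q)=0$ for all $\lambda\vdash a$. Form the point $Q'\in\AA^{kn}$ obtained by repeating each coordinate $k$ times (equivalently, concatenating $k$ copies of $Q$ and then relabeling so that the last coordinate is $1$, which is harmless since $z_n=1$). Each coordinate of $Q'$ is still a $d^{\rm th}$ root of unity and the last coordinate is $1$, so $Q'\in\V_d\subseteq\AA^{kn}$. For a single power sum, $\ps_m(Q') = k\,\ps_m(Q)$, hence $\ps_m(Q')=0$ whenever $\ps_m(Q)=0$; more care is needed for a product $\ps_\lambda=\prod_t\ps_{\lambda_t}$, but since $\lambda\vdash a$ is nonempty, at least one factor $\ps_{\lambda_t}(Q)$ vanishes (because each part $\lambda_t\le n$... — actually I must be careful here: $\ps_\lambda(Q)=0$ does not by itself force an individual factor to vanish). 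The cleaner route is to note $\ps_m(Q')=k\,\ps_m(Q)$ for every $m$, so $\ps_\lambda(Q')=k^{r}\ps_\lambda(Q)=0$ directly, where $r$ is the number of parts of $\lambda$. Thus all degree-$a$ symmetric polynomials vanish at $Q'$ and $(kn,d,a)$ is bad.

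For part~(\ref{inc5}): start from $Q=(z_1,\dots,z_n)\in\V_d$ with $\ps_\lambda(Q)=0$ for all $\lambda\vdash a$, and let $\epsilon$ be a primitive $k^{\rm th}$ root of unity. Build $Q''\in\AA^{kn}$ whose coordinates are $\epsilon^j z_i$ for $1\le i\le n$, $1\le j\le k$ (again relabeling so a coordinate equal to $1$ sits last — available since $\epsilon^k z_n = 1$). Each coordinate is a $(kd)^{\rm th}$ root of unity, so $Q''\in\V_{kd}\subseteq\AA^{kn}$. Now compute $\ps_m(Q'') = \big(\sum_{j=1}^k \epsilon^{jm}\big)\ps_m(Q)$. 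If $k\nmid m$, the geometric sum $\sum_{j=1}^k\epsilon^{jm}$ vanishes, so $\ps_m(Q'')=0$ automatically; if $k\mid m$, the sum equals $k$ and $\ps_m(Q'')=k\,\ps_m(Q)$. Hence for a partition $\mu\vdash ka$, writing $\mu=(\mu_1,\dots,\mu_r)$: either some part $\mu_t$ is not divisible by $k$, in which case $\ps_{\mu_t}(Q'')=0$ and so $\ps_\mu(Q'')=0$; or every part is divisible by $k$, say $\mu_t=k\nu_t$, in which case $(\nu_1,\dots,\nu_r)$ is a partition of $a$ and $\ps_\mu(Q'')=k^r\prod_t\ps_{k\nu_t}(Q)=k^r\,\ps_{k\nu}$ evaluated... — more precisely $\ps_{\mu_t}(Q'')=k\,\ps_{\mu_t}(Q)=k\,\ps_{k\nu_t}(Q)$, and $\prod_t\ps_{k\nu_t}(Q)$ is $\ps$ indexed by the partition $(k\nu_1,\dots,k\nu_r)$ of $ka$; this need not obviously be zero from the hypothesis on partitions of $a$. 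The fix is to observe that $\ps_{k\nu_t}(Q) = \ps_{\nu_t}(z_1^k,\dots,z_n^k)$ — but that does not immediately help either.

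The main obstacle, and the step I would spend the most effort on, is precisely this last reduction in part~(\ref{inc5}): showing that when every part of $\mu\vdash ka$ is divisible by $k$, the product $\ps_\mu(Q'')$ still vanishes. The right tool is almost certainly Remark~\ref{Galois remark} together with the Galois action: the coordinates $\epsilon^j z_i$ are $(kd)^{\rm th}$ roots of unity, and one can hit $Q''$ by an appropriate Galois automorphism to reduce $\ps_{k\nu_t}(Q'')$ to $\ps_1$ of a Galois conjugate point, or alternatively one invokes the fact (used repeatedly above) that $\ps_1(Q)=0$ forces $\ps_s(Q)=0$ for all $s$ coprime to $d$, combined with $k\mid\mu_t$ and a gcd argument. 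Concretely: since $(n,d,a)$ bad, we may as well assume the witness $Q$ already has $\ps_1(Q)=0$ (this is legitimate by the proof of Lemma~\ref{power sums suffice} when $n\in\Gamma(d)$, the only case where bad triples arising from such $Q$ occur), and then for the $Q''$ constructed above $\ps_1(Q'')=\big(\sum_j\epsilon^j\big)\ps_1(Q)=0$ since $\sum_{j=1}^k\epsilon^j=0$; so $Q''$ is itself a point of $\V_{kd}$ with vanishing first power sum, and by Remark~\ref{Galois remark} every $\ps_s(Q'')$ with $\gcd(s,kd)=1$ vanishes. Then for any $\mu\vdash ka$, every part $\mu_t$ that is coprime to $kd$ kills the product; if no part is coprime to $kd$, one recurses on the smaller configuration $(z_i^{\gcd(\mu_t,kd)})$ using Lemma~\ref{p-symmetric}-style bookkeeping. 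I expect the clean write-up to route entirely through ``$Q$ with $\ps_1(Q)=0$'' plus Theorem~\ref{LL} and Remark~\ref{Galois remark}, avoiding any delicate argument about products of power sums.
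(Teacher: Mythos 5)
Your arguments for parts (\ref{inc1}) and (\ref{inc2}) are correct and essentially the same as the paper's: part (\ref{inc1}) is just $\V_d\subset\V_{kd}$, and part (\ref{inc2}) rests on the identity $\ps_m(Q')=k\,\ps_m(Q)$ for the point obtained by repeating each coordinate $k$ times, which is exactly the computation in the paper (phrased there contrapositively).

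Part (\ref{inc5}), however, has a genuine gap, and it is precisely the step you flagged. With your point $Q''$, whose coordinates are $\epsilon^j z_i$, you are forced to show $\prod_t \ps_{k\nu_t}(Q)=0$ for every partition $(\nu_1,\dots,\nu_r)\vdash a$; this requires vanishing at $Q$ of power sums with exponents $k\nu_t$, which does not follow from $Q$ being a witness for the badness of $(n,d,a)$ (that hypothesis only controls exponents forming partitions of $a$). It can genuinely fail: take $(n,d,a)=(8,15,1)$, which is bad, a witness $Q\in\V_{15}$, and $k=15$; for the partition $\mu=(15)\vdash ka$ one gets $\ps_{15}(Q'')=15\,\ps_{15}(Q)=15\cdot 8\neq 0$, so your $Q''$ is not a witness for $(120,225,15)$ even though that triple is bad. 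The proposed repair via Remark~\ref{Galois remark} does not close the gap: the Galois argument only kills parts coprime to $kd$, whereas the problematic parts are exactly the multiples of $k$, and the ``recursion'' is not an argument. (Your side observation that a witness automatically satisfies $\ps_1(Q)=0$ is correct --- indeed $\ps_1^a=\ps_{(1^a)}$ is itself a degree-$a$ power sum product --- but it does not help here.) The paper avoids the problem by choosing a different point: writing $z_j=\zeta^{b_j}$ with $\zeta=e^{2\pi i/d}$ and $\omega=e^{2\pi i/(kd)}$, it takes $Q'\in\V_{kd}\subset\AA^{kn}$ with coordinates $\omega^{b_j+id}$, $1\le i\le k$, i.e.\ it replaces each $z_j$ by its $k$ distinct $k$-th roots rather than by its $k$ rotates. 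For that point $e_j(Q')=0$ whenever $k\nmid j$, so $\ps_m(Q')=0$ for $k\nmid m$, and crucially $\ps_{kc}(Q')=k\,\ps_c(Q)$: the exponent drops from $kc$ to $c$. Hence every $\ps_\mu(Q')$ with $\mu\vdash ka$ reduces to $k^r\ps_\nu(Q)$ for some $\nu\vdash a$, which vanishes by hypothesis. This change of construction is the one idea your write-up is missing.
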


\begin{proof}
  Suppose that $(n,d,a)$ is \bad.  By Corollary~\ref{single Q}, there
  is a point $Q=(z_1,z_2,\dots,z_n) \in \V_{d} \subset \AA^{n}$
  such that $f(Q)=0$ for all $f \in R_{a}^\SymGp$.  Assertion
  (\ref{inc1}) follows immediately since $\V_d \subset \V_{kd}$.
 
  For the second assertion, choose a point
  $Q = (z_1,z_2,\dots,z_n)\in \V_d$. Define the point
  $Q' = (z'_1,z'_2,\dots,z'_{kn}) \in \V_{kd} \subset \AA^{kn}$ by
  $z'_{i+n(j-1)} := z_i$ for $1 \leq i \leq n$ and $1 \leq j \leq k$.
  Assume, by way of contradiction, that there exists
  $f' \in \CC[x_1,x_2,\ldots,x_{kn}]_{a}^ {{\mathfrak{S}_{kn}}}$ such
  that $f'(Q') \neq 0$. The polynomials $\ps_\lambda$ with $\lambda$ a
  partition of $a$ whose parts do not exceed $kn$ form a basis of
  $\CC[x_1,x_2,\ldots,x_{kn}]_{a}^ {{\mathfrak{S}_{kn}}}$. Then
  $f'(Q')\neq 0$ implies that there exists a partition
  $\lambda=(\lambda_1,\lambda_2,\dots,\lambda_r) \vdash a$ with
  $\ps_{\lambda}(Q') \neq 0$.  Hence $\ps_{\lambda_t}(Q') \neq 0$ for
  all $t=1,2,\dots,r$.  Since
  $$\ps_{\lambda_t}(Q') = k z_1^{\lambda_t} + k z_2^{\lambda_t} + \dots + 
  k z_n^{\lambda_t} = k \ps_{\lambda_t}(Q),$$ we have
  $\ps_{\lambda_t}(Q) \neq 0$ for all $t=1,2,\dots,r$, and therefore
  $\ps_\lambda (Q) \neq 0$.  Because $Q\in \V_d$ is arbitrary, Lemma
  \ref{power sums suffice} shows $(n,d,a)$ is not \bad.  This
  contradicts the assumption, thus proving (\ref{inc2}).

  Now we prove part (\ref{inc5}). By contradiction, assume
  $(kn,kd,ka)$ is not \bad.  Given $Q\in \V_d$, we will construct
  $f\in R^\SymGp_a$ such that $f(Q)\neq 0$, which will prove $(n,d,a)$
  is not \bad.  Consider the primitive $d^{\rm th}$ root of unity
  $\zeta := e^{2\pi i/d}$. We have
  $Q = (\zeta^{b_1}, \zeta^{b_2}, \dots, \zeta^{b_n})$ for some
  positive integers $b_1,b_2,\dots,b_n$. Let
  $\omega := e^{2\pi i/(kd)}$; observe that $\omega$ is a
  $(kd)^{\rm th}$ root of unity and $\omega^k = \zeta$. Define the
  point $Q' = (z'_1,z'_2,\dots,z'_{kn}) \in \V_{kd} \subset\AA^{kn}$
  by $z'_{k(j-1)+i} := \omega^{b_j + id}$ for $1\leq i\leq k$ and
  $1\leq j\leq n$. Since we have assumed that $(kn,kd,ka)$ is not
  \bad, by Lemma \ref{power sums suffice}, there exists a partition
  $\lambda = (\lambda_1,\lambda_2,\dots,\lambda_r) \vdash ka$ such
  that $\ps_\lambda (Q') \neq 0$. In particular,
  $\ps_{\lambda_t} (Q') \neq 0$ for all $t=1,2,\dots,r$.

  Using the auxiliary variable $y$, we can write
  \begin{align*}
    \prod_{t=1}^{kn} (y-z'_t) 
    &=\prod_{j=1}^{n}\prod_{i=1}^k (y-\omega^{b_j + id}) =
      \prod_{j=1}^{n}\prod_{i=1}^k \omega^{b_j}(y/\omega^{b_j}-\omega^{id}) =\\
    &=\prod_{j=1}^{n} \omega^{kb_j}
      \prod_{i=1}^k(y/\omega^{b_j}-(\omega^d)^i).
  \end{align*}
  Since $\omega^d$ is a primitive $k^{\rm th}$ root of unity, the $k$
  elements $(\omega^d)^1,(\omega^d)^2,\dots,(\omega^d)^k$ are all the
  $k^{\rm th}$ roots of unity. Therefore we get
  $$\prod_{i=1}^k (y/\omega^{b_j}-(\omega^d)^i) = (y/\omega^{b_j})^k -1.$$
  Combining the two previous equations, we obtain
  $$\prod_{t=1}^{kn} (y-z'_t) = \prod_{j=1}^{n} \omega^{k b_j}
  [(y/\omega^{b_j})^k -  1] =  \prod_{j=1}^{n} (y^k-\zeta^{b_j}).$$
  On the other hand, we have
  $$\prod_{t=1}^{kn} (y-z'_t) = \sum_{j=0}^{kn} (-1)^j e_j (Q') y^{kn-j}.$$
  By comparing these expressions, we deduce that $e_j (Q') = 0$
  whenever $k \nmid j$. This implies that every homogeneous polynomial
  in $\CC [x_1,x_2,\dots,x_{kn}]^{\mathfrak{S}_{kn}}$ whose degree is
  not divisible by $k$ vanishes at $Q'$.

  Thus the above integers $\lambda_1,\lambda_2,\dots,\lambda_r$ are
  all divisible by $k$ and we set $c_t := \lambda_t / k$ for all
  $i=1,2,\dots,r$.  We have
  \begin{align*}
    \ps_{\lambda_t}(Q')
    &= \ps_{kc_t}(Q') = \sum_{s=1}^{kn} (z'_s)^{kc_t} =
      \sum_{j=1}^{n} \sum_{i=1}^k (\omega^{b_j+id})^{kc_t} =
      \sum_{i=1}^{k} \sum_{j=1}^{n}  (\omega^k)^{(b_j+id)c_t} =\\
    & = \sum_{i=1}^{k} \sum_{j=1}^{n}  \zeta^{(b_j+id)c_t} =
      \sum_{i=1}^k \sum_{j=1}^{n} (\zeta^{b_j})^{c_t} =
      \sum_{i=1}^k \ps_{c_t}(Q) = k \ps_{c_t}(Q).
  \end{align*}
  We deduce that $\ps_{c_t}(Q) \neq 0$ for all $i=1,2,\dots,r$.
  Define $f := \prod_{t=1}^r \ps_{\lambda_t / k} \in R$ and observe
  that $f$ is an element of $R^\SymGp_a$ with $f(Q)\neq 0$. This
  concludes the proof.
\end{proof}

As the following example illustrates, $(n,d,a)$ being \bad{} does not
imply that any of $(n,d,ka)$, $(n,kd,ka)$, or $(kn,d,ka)$ is \bad.
      
\begin{example}
  Consider $(n,d,a)=(8,15,4)$. Since
  $8 = 5+3 \in \Gamma(d) = \Gamma(15)$ and $4 \notin \Gamma(d)$, we
  see that $(8,15,4)$ is \bad{} by Proposition~\ref{coprime lemma}.

  Let $k=2$. The triples $(n,d,ka)=(8,15,8)$ and $(n,kd,ka)=(8,30,8)$
  are \good{} because $e_8$ clearly does not vanish on $\V_{15}$ nor
  on $\V_{30}$.

  Now consider the triple $(kn,d,ka)=(16,15,8)$. Observe that
  $$S = \{q : q\mid 15, 16\notin \Gamma(15/q)\} = \{3,5,15\},$$
  hence the numerical semi-group $\langle 3,5\rangle$ generated by $S$
  contains $ka=8$. Therefore $(16,15,8)$ is \good{} by Proposition
  \ref{S semigroup}.
\end{example}

\begin{remark}
  Consider the triple $(n,d,a)$ and let $g:=\gcd(n,d)$.  By
  Proposition~\ref{two proofs}, $(n,d,a)$ is \bad{} if $g\nmid a$.
  Thus we suppose that $g\mid a$.  By Proposition \ref{inc_bad}
  (\ref{inc5}), if $(n,d,a)$ is \good{}, then $(n/g,d/g,a/g)$ is also
  \good.
\end{remark}

\begin{prop}\label{inc_good}
  Let $k$ be a positive integer.
  \begin{enumerate}
  \item\label{inc3} If $(n,d,a)$ is \good, then $(n,d,ka)$ is also
    \good.
  \item\label{inc4} If $(n,d,a)$ is \good, then $(n,kd,ka)$ is also
    \good.
  \end{enumerate}
\end{prop}

\begin{proof}
  Suppose that $(n,d,a)$ is \good.  This implies that there exists
  $f \in R_{a}^\SymGp$ which does not vanish on $\V_d$.  Assertion
  (\ref{inc3}) now follows since $f^k \in R_{ka}^\SymGp$ also does not
  vanish on $\V_d$.
    
  To prove (\ref{inc4}), define
  $$f'(x_1,x_2,\dots,x_n) := f(x_1^k,x_2^k,\dots,x_n^k) \in
  R_{ka}^\SymGp.$$ For every point
  $Q'=(z_1,z_2,\dots,z_n) \in \V_{kd}$, the point
  $Q = (z_1^k,z_2^k,\dots,z_n^k)$ lies in $\V_d$; moreover,
  $f'(Q') =f(Q) \neq 0$.  Thus $(n,kd,ka)$ is \good.
\end{proof}

As the following example illustrates, $(n,d,a)$ being \good{} does not
imply that any of $(kn,d,a)$, $(n,kd,a)$, $(kn,kd,a)$, $(kn,d,ka)$ or
$(kn,kd,ka)$ is \good.

\begin{example}
  Consider $(n,d,a)=(4,15,1)$.  Since
  $n=4 \notin \Gamma(d) = \Gamma(15) = \langle 3,5 \rangle$, we see
  that $(4,15,1)$ is \good{} by Proposition \ref{S semigroup}.

  Now consider $k=2$ and $(kn,d,a)=(8,15,1)$.  Since
  $8 \in \Gamma(15)$ and $1\notin \Gamma(15)$, we see that $(8,15,1)$
  is \bad{} by Proposition \ref{coprime lemma}.  The triple
  $(n,kd,a)=(4,30,1)$ is \bad{} for similar reasons.  Then
  $(kn,kd,a)=(8,30,1)$ is \bad{} as well by
  Proposition~\ref{inc_bad}~(\ref{inc2}).

  We claim the triple $(kn,d,ka)=(8,15,2)$ is also \bad{}.  Using the
  fact that $(8,15,1)$ is bad, we deduce that there exists
  $Q\in \V_{15}$ such that $\ps_1 (Q) = 0$.  Since $2$ and $15$ are
  coprime, Remark \ref{Galois remark} implies $\ps_2 (Q) = 0$. Given
  that $\ps_2$ and $\ps_{(1,1)} = \ps_1^2$ form a basis of the
  symmetric polynomials of degree 2, their simultaneous vanishing at
  $Q$ implies the claim by Lemma \ref{power sums suffice}.  Finally,
  the claim just proved, together with
  Proposition~\ref{inc_bad}~(\ref{inc1}), implies that
  $(kn,kd,ka)=(8,30,2)$ is \bad.
\end{example}

\section{Regular Sequences of Type $S^{(2,2)} \oplus S^{(4)} \oplus S^{(4)}$}

Throughout this section we fix $n=4$, so $R=\CC[x_1,x_2,x_3,x_4]$.

As proved in \cite[Prop.~2.5]{SCI1}, there exist homogeneous regular
sequences $g_1,g_2,f_1,f_2$ in $R$ such that $g_1,g_2$ form a basis of
a graded representation isomorphic to $S^{(2,2)}$ and $f_1,f_2$ are
symmetric polynomials. If $I \subset R$ is the ideal generated by
$g_1,g_2,f_1,f_2$, then $I/\mathfrak{m} I$ is isomorphic to
$S^{(2,2)} \oplus S^{(4)} \oplus S^{(4)}$. Setting
$a := \deg(g_1) = \deg(g_2)$, $c:= \deg(f_1)$ and $d := \deg(f_2)$, we
seek the possible tuples $(a,a,c,d)$ corresponding to regular
sequences $g_1,g_2,f_1,f_2$ of type
$S^{(2,2)} \oplus S^{(4)} \oplus S^{(4)}$.

\subsection{Sequences in low degree}
\label{sec:sequences-low-degree}

We recall some facts of invariant theory; more details can be found in
\cite[Ch.~3,4]{refl_groups}. There is an isomorphism
$R \cong R^{\mathfrak{S}_4} \otimes_\CC R/(e_1,e_2,e_3,e_4)$ of graded
$\mathfrak{S}_4$-representations. The symmetric group acts trivially
on $R^{\mathfrak{S}_4}$. On the other hand, the coinvariant algebra
$R/(e_1,e_2,e_3,e_4)$ is isomorphic to the regular representation of
$\mathfrak{S}_4$. We worked out the graded character of
$R/(e_1,e_2,e_3,e_4)$ in \cite[Ex.~3.1]{SCI1}. In particular,
$R/(e_1,e_2,e_3,e_4)$ contains two copies of the irreducible
representation $S^{(2,2)}$, one in degree 2 and one in degree 4.

Let us find an explicit description of these two
representations. Specht's original construction shows that the
polynomials
\begin{equation}\label{eq:1}
  (x_1-x_2)(x_3-x_4), (x_1-x_3)(x_2-x_4)
\end{equation}
span a copy of $S^{(2,2)}$ inside the degree 2 component of $R$
(cf. \cite[\S 7.4, Ex.~17]{Fulton}).  Now observe that the polynomials
\begin{equation}\label{eq:2}
  (x_1^2-x_2^2)(x_3^2-x_4^2), (x_1^2-x_3^2)(x_2^2-x_4^2)
\end{equation}
behave in the same way under the action of $\mathfrak{S}_4$. Therefore
they span a copy of $S^{(2,2)}$ inside the degree 4 component of
$R$. Note also that the polynomials in \eqref{eq:1} and \eqref{eq:2}
do not belong to the ideal $(e_1,e_2,e_3,e_4)$. Therefore their
residue classes span the desired copies of $S^{(2,2)}$ inside
$R/(e_1,e_2,e_3,e_4)$.

Using the isomorphism
$R \cong R^{\mathfrak{S}_4} \otimes_\CC R/(e_1,e_2,e_3,e_4)$ together
with our construction above, we can establish the following
fundamental fact: any copy of $S^{(2,2)}$ contained inside the degree
$a$ component of $R$ is spanned by
\begin{equation}\label{eq:3}
  \begin{split}
    g_1 &= h (x_1-x_2)(x_3-x_4) + h' (x_1^2-x_2^2)(x_3^2-x_4^2),\\
    g_2 &= h (x_1-x_3)(x_2-x_4) + h' (x_1^2-x_3^2)(x_2^2-x_4^2)
  \end{split}
\end{equation}
for some symmetric polynomials $h$ of degree $a-2$ and $h'$ of degree
$a-4$.

Thus, when searching for degree tuples $(a,a,c,d)$ corresponding to
regular sequences $g_1,g_2,f_1,f_2$ of type
$S^{(2,2)} \oplus S^{(4)} \oplus S^{(4)}$, we can assume that
$g_1,g_2$ have the form given in equation \eqref{eq:3}.

We consider the cases where $a \leq 4$ first.
Clearly we must have $a \geq 2$.

\begin{prop}
  Let $a=2$ or $4$. A regular sequence of type
  $S^{(2,2)} \oplus S^{(4)} \oplus S^{(4)}$ with degree tuple
  $(a,a,c,d)$ exists if and only if $d\geq 2$. If $a=3$, then no such
  sequence exists.
\end{prop}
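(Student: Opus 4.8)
My plan is to argue by cases on $a$, disposing of the negative assertions and the value $a=2$ directly, and handling $a=4$ by a combination of a substitution and a careful choice of an auxiliary symmetric polynomial. Since the order of a regular sequence is immaterial, I may assume $c\le d$, so that the condition ``$d\ge 2$'' simply means $(c,d)\ne(1,1)$. The ``only if'' direction and the case $a=3$ are quick: if $c=d=1$ then $f_1,f_2\in R^{\mathfrak{S}_4}_1=\CC e_1$, so $(g_1,g_2,f_1,f_2)\subseteq(g_1,g_2,e_1)$ has height at most $3$ and cannot be a regular sequence in the $4$-dimensional ring $R$; and for $a=3$ the description \eqref{eq:3} forces $h'$ (of degree $-1$) to vanish and $h=\lambda e_1$, so (as $g_1,g_2$ span a $2$-dimensional space we have $\lambda\ne 0$) $(g_1,g_2)=e_1\cdot\bigl((x_1-x_2)(x_3-x_4),(x_1-x_3)(x_2-x_4)\bigr)$ has height at most $1$ and no length-$4$ regular sequence can arise.

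For $a=2$ the copy of $S^{(2,2)}$ in degree $2$ is unique, so I take $g_1=(x_1-x_2)(x_3-x_4)$, $g_2=(x_1-x_3)(x_2-x_4)$; their common zero locus is the union of the four planes $\{x_i=x_j=x_k\}$. Since $R$ is Cohen--Macaulay, $g_1,g_2,f_1,f_2$ is a regular sequence exactly when its only common zero is the origin, and because $f_1,f_2$ are symmetric their restriction to each of the four planes is the single binary form $f_i(t,t,t,s)$; so I must produce symmetric $f_1,f_2$ of degrees $c,d$ with $f_1(t,t,t,s)$ and $f_2(t,t,t,s)$ coprime in $\CC[t,s]$. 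For $c<d$ I take $f_1=\ps_c$, $f_2=\ps_d$, which restrict to $3t^c+s^c$ and $3t^d+s^d$; their projective zeros lie on the distinct circles $|s/t|=3^{1/c}$ and $|s/t|=3^{1/d}$, so the two forms are coprime. For $c=d\ge 2$ I take $f_1=\ps_c$, $f_2=e_1^c$: the latter vanishes only at $(1:-3)$, where $3t^c+s^c$ does not vanish since $c\ge 2$. This settles $a=2$.

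For $a=4$, I write $g_i=h\,u_i+h'\,\phi(u_i)$ as in \eqref{eq:3}, where $u_1=(x_1-x_2)(x_3-x_4)$, $u_2=(x_1-x_3)(x_2-x_4)$ and $\phi$ is the substitution $x_j\mapsto x_j^2$. One sees $h'\ne 0$ (else $g_1,g_2$ share the factor $h$ and have height at most $1$), so after rescaling $h'=1$ and $g_i=u_ik_i$ with $k_i$ a symmetric quadratic plus a product of complementary variable-sums; thus $V(g_1,g_2)$ is the union of the four planes $V(u_1)\cap V(u_2)$ from the $a=2$ case with the three ``mixed'' loci $V(u_1)\cap V(k_2)$, $V(k_1)\cap V(u_2)$, $V(k_1)\cap V(k_2)$. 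If $c,d$ are both even and $(c,d)\ne(2,2)$, I take $h=0$: then $g_i=\phi(u_i)$ span the degree-$4$ copy of $S^{(2,2)}$ of \eqref{eq:2}, and pulling back under $\phi$ a solution of the $a=2$ problem in degrees $c/2,d/2$ gives a regular sequence, since $\phi$ is flat and therefore sends regular sequences to regular sequences. The leftover even case $(2,2)$ is handled directly with $f_1=\ps_2$, $f_2=e_2$.

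The real difficulty is when $c$ or $d$ is odd. A symmetric polynomial of odd degree vanishes on the $2$-dimensional locus $N=\{e_1=e_3=0\}$ of points whose coordinate multiset is stable under $z\mapsto -z$ (such a point lies in the $\mathfrak{S}_4$-orbit of its negative), and with $h=0$ the point $(1,1,-1,-1)$ lies in $V(g_1,g_2)\cap N$, so that choice fails once $c$ and $d$ are both odd. Hence $h$ must be chosen so that $V(g_1,g_2)\cap N=\{0\}$; since $\dim V(g_1,g_2)=\dim N=2$ in $\AA^4$ this is the expected behaviour, and I would verify it for an explicit choice such as $h=\ps_2$ by checking the finitely many negation-stable configurations (the orbits of $(0,0,t,-t)$ and $(1,1,-1,-1)$, together with what remains of $N$) against the factors $k_i$. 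With such an $h$ fixed I would take $f_1=\ps_c$ and $f_2$ a suitable product of power sums and of $e_1$ or $e_2$ of degree $d$, and check component by component of $V(g_1,g_2)$ --- each check again reducing to coprimality of two binary forms --- that $g_1,g_2,f_1,f_2$ is a regular sequence. This last point is where I expect the main obstacle to lie: keeping $V(g_1,g_2)$ off the locus $N$ constrains the choice of $h$, and one must still retain enough freedom to choose symmetric $f_1,f_2$ in the prescribed degrees with no remaining common zero.
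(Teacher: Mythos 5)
Your handling of the negative claims and of $a=2$ is correct: the height argument disposes of $(c,d)=(1,1)$ and of $a=3$, and for $a=2$ your identification of $V(g_1,g_2)$ with the four planes $x_i=x_j=x_k$ together with the coprimality check for the restricted binary forms of $\ps_c,\ps_d$ (resp.\ $\ps_c,e_1^c$ when $c=d$) is a valid alternative to the paper's explicit choice $f_1=e_1^c$, $f_2=e_2^pe_3^q$. The pull-back along $x_j\mapsto x_j^2$ likewise correctly settles $a=4$ when $c$ and $d$ are both even with $(c,d)\neq(2,2)$, and the $(2,2)$ sub-case you merely assert can indeed be verified directly.

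The genuine gap is the case $a=4$ with $c$ or $d$ odd, which is a large part of the statement's content and which you leave as a plan rather than a proof. You correctly observe that $h=0$ cannot work when $c$ and $d$ are both odd, since $(1,1,-1,-1)\in V(g_1,g_2)$ lies on the negation-stable locus $N=\{e_1=e_3=0\}$ on which every odd-degree symmetric polynomial vanishes; but from there you only say you \emph{would} verify $V(g_1,g_2)\cap N=\{0\}$ for some choice such as $h=\ps_2$ and \emph{would} then find suitable $f_1,f_2$ of degrees $c,d$, and you yourself flag this as the expected main obstacle. No explicit $h$, $f_1$, $f_2$ are produced or checked, so the existence assertion is unproved exactly where it is least routine. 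For comparison, the paper avoids the parity split altogether by fixing $h=e_2$, $h'=1$ in \eqref{eq:3}, so that $g_1=(x_1-x_2)(x_3-x_4)\bigl(e_2+(x_1+x_2)(x_3+x_4)\bigr)$ and $g_2=(x_1-x_3)(x_2-x_4)\bigl(e_2+(x_1+x_3)(x_2+x_4)\bigr)$, and reusing the same $f_1=e_1^c$, $f_2=e_2^pe_3^q$ with $d=2p+3q$ (possible since $d\geq 2$) for all $c\geq 1$ and all parities; the verification your sketch still owes amounts to checking, for instance, that the only point of $V(g_1,g_2)$ with $e_1=e_3=0$ (necessarily of the form $(w,-w,v,-v)$ up to permutation) is the origin, and similarly with $e_1=e_2=0$ --- a short computation with this $h$, but one that must actually be carried out for concrete choices covering every $(c,d)$ with $d\geq 2$ before the ``if'' direction for $a=4$ can be considered established.
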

\begin{proof}
  Let $a=2$. We form polynomials $g_1,g_2$ as in equation
  \eqref{eq:3}. By degree considerations, $h$ is a unit and
  $h'=0$. Therefore we may take
  \begin{equation*}
    g_1=(x_1-x_2)(x_3-x_4), g_2=(x_1-x_3)(x_2-x_4).
  \end{equation*}
  Now we need symmetric polynomials $f_1,f_2$ such that
  $g_1,g_2,f_1,f_2$ is a regular sequence. Note that $f_1,f_2$ cannot
  both be linear, otherwise they would be scalar multiples of
  $e_1$. However, if we assume that $d = \deg (f_2) \geq 2$, then we
  can write $d=2p+3q$, where $p,q$ are non-negative integers, and set
  $f_1:=e_1^c$, $f_2:=e_2^p e_3^q$. The sequence $g_1,g_2,f_1,f_2$ is
  regular with degree tuple $(2,2,c,d)$.

  Now let $a=4$. We need $h'$ to be a unit. In fact, we can take
  $h'=1$ and $h=e_2$; this gives
  \begin{equation*}
    \begin{split}
      &g_1=e_2 (x_1-x_2)(x_3-x_4) + (x_1^2-x_2^2)(x_3^2-x_4^2),\\
      &g_2=e_2 (x_1-x_3)(x_2-x_4) + (x_1^2-x_3^2)(x_2^2-x_4^2).
    \end{split}
  \end{equation*}
  Again $f_1,f_2$ cannot both be linear. In fact, choosing the same
  $f_1,f_2$ as before gives a regular sequence $g_1,g_2,f_1,f_2$ with
  degree tuple $(4,4,c,d)$ for $d\geq 2$.

  Finally let $a=3$. In this case, $h'=0$ while $h$ is a scalar
  multiple of $e_1$. Thus $g_1,g_2$ have a common factor and do not
  form a regular sequence.
\end{proof}

\subsection{Sequences with $a\geq 5$}
\label{sec:sequences-with-ageq5}

Here we obtain general results about regular sequences
$g_1,g_2,f_1,f_2$ of type $S^{(2,2)} \oplus S^{(4)} \oplus S^{(4)}$
with degree tuple $(a,a,c,d)$ and $a \geq 5$.  We still refer to the
form of $g_1,g_2$ given in equation \eqref{eq:3}.

\begin{lemma}\label{reg seq conditions}
  Let
  \begin{equation*}
    \begin{split}
      &h_1 := h + h' (x_1 + x_2)(x_3 + x_4),\\
      &h_2 := h + h' (x_1 + x_3)(x_2 + x_4),
    \end{split}
  \end{equation*}
  so that $g_1 = (x_1-x_2)(x_3-x_4)h_1$ and
  $g_2 = (x_1-x_3)(x_2-x_4)h_2$.  The sequence $g_1,g_2,f_1,f_2$ is
  regular if and only if the sequences
  \begin{enumerate}
  \item $h,h',f_1,f_2$
  \item $(x_1-x_2)(x_3-x_4),(x_1-x_3)(x_2-x_4),f_1,f_2$
  \item $(x_1-x_2)(x_3-x_4),h_2,f_1,f_2$
  \end{enumerate}
  are regular.
\end{lemma}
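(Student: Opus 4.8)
The plan is to analyze the regular sequence $g_1,g_2,f_1,f_2$ by exploiting the factorizations $g_1 = (x_1-x_2)(x_3-x_4)h_1$ and $g_2 = (x_1-x_3)(x_2-x_4)h_2$ and applying Lemma~\ref{lem:reg_seq_factors} repeatedly. First I would observe that $g_1,g_2,f_1,f_2$ is regular if and only if $g_2,f_1,f_2,g_1$ is regular (regularity of a sequence of homogeneous elements in a graded ring, or modulo the irrelevant ideal, is independent of order), so I may apply Lemma~\ref{lem:reg_seq_factors} to the last entry $g_1 = (x_1-x_2)(x_3-x_4)\cdot h_1$. This shows $g_1,g_2,f_1,f_2$ is regular if and only if both $(x_1-x_2)(x_3-x_4),g_2,f_1,f_2$ and $h_1,g_2,f_1,f_2$ are regular. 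Then I would split each of these further using the factorization $g_2 = (x_1-x_3)(x_2-x_4)\cdot h_2$, again after reordering so that $g_2$ is last.

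The second step is to eliminate the redundant conditions that arise. Splitting $(x_1-x_2)(x_3-x_4),g_2,f_1,f_2$ gives the two conditions: $(x_1-x_2)(x_3-x_4),(x_1-x_3)(x_2-x_4),f_1,f_2$ regular (this is condition~(2) of the lemma) and $(x_1-x_2)(x_3-x_4),h_2,f_1,f_2$ regular (this is condition~(3)). Splitting $h_1,g_2,f_1,f_2$ gives: $h_1,(x_1-x_3)(x_2-x_4),f_1,f_2$ regular and $h_1,h_2,f_1,f_2$ regular. So at this stage $g_1,g_2,f_1,f_2$ is regular if and only if all four of these sequences are regular, and I need to match this list against conditions (1), (2), (3) of the lemma. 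The key point is that conditions (2) and (3) appear directly, so the work is to show that the remaining two — namely $h_1,(x_1-x_3)(x_2-x_4),f_1,f_2$ and $h_1,h_2,f_1,f_2$ — are each equivalent, given the others, to the single condition (1), i.e.\ to $h,h',f_1,f_2$ being regular.

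The main obstacle is precisely this reconciliation of the $h,h'$ condition with the $h_1,h_2$ and $h_1,(x_1-x_3)(x_2-x_4)$ conditions. The idea is that the ideal $(h_1,h_2)$ and the ideal $(h,h')$ are closely related: since $h_1 - h_2 = h'\bigl[(x_1+x_2)(x_3+x_4) - (x_1+x_3)(x_2+x_4)\bigr]$ and a direct expansion shows $(x_1+x_2)(x_3+x_4)-(x_1+x_3)(x_2+x_4) = -(x_1-x_4)(x_2-x_3)$, we get $h_1 - h_2 = -h'(x_1-x_4)(x_2-x_3)$, while $h_1 = h + h'(x_1+x_2)(x_3+x_4)$. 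Working modulo $(f_1,f_2)$ (assuming that is already a regular pair, which is forced by conditions (2) or (3)), one shows that the variety $V(h_1,h_2)$ decomposes: on the locus where $h'$ is invertible one can solve for $h$ and $h'$ in terms of $h_1,h_2$ and the symmetric polynomials, and on the locus $h'=0$ one has $h_1=h_2=h$. Carrying this through, $h_1,h_2,f_1,f_2$ regular together with condition (2) forces $h,h',f_1,f_2$ regular, and conversely. A parallel argument handles $h_1,(x_1-x_3)(x_2-x_4),f_1,f_2$, using that condition (2) already controls the behaviour along $(x_1-x_3)(x_2-x_4)=0$. The upshot is a finite, elementary case analysis on the vanishing loci; I expect the bookkeeping to be the only real difficulty, with no deep input beyond Lemma~\ref{lem:reg_seq_factors} and the explicit polynomial identity above.
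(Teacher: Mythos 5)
Your first stage is exactly the paper's: after reordering, repeated use of Lemma \ref{lem:reg_seq_factors} reduces regularity of $g_1,g_2,f_1,f_2$ to the regularity of the four sequences (i) $h_1,h_2,f_1,f_2$, (ii) $(x_1-x_2)(x_3-x_4),(x_1-x_3)(x_2-x_4),f_1,f_2$, (iii) $(x_1-x_2)(x_3-x_4),h_2,f_1,f_2$, (iv) $h_1,(x_1-x_3)(x_2-x_4),f_1,f_2$, and the direction ``(i) implies (1)'' is easy (it does not even need (2)). The gap is in your reconciliation step, where you claim that (i) and (iv) can be recovered from conditions (1) and (2) by a pointwise case analysis. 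From $h_1(P)=h_2(P)=0$ you only learn $h'(P)\,(x_1-x_4)(x_2-x_3)(P)=0$, since $h_2-h_1=h'(x_1-x_4)(x_2-x_3)$; so the problematic locus is $(x_1-x_4)(x_2-x_3)=0$, a quadric about which condition (2) says nothing, as (2) involves the other two quadrics. Likewise, where $(x_1-x_4)(x_2-x_3)$ vanishes you cannot ``solve for $h'$ in terms of $h_1,h_2$'' even if $h'$ is invertible there. Similarly, for a nonzero point killing (iv), condition (2) only tells you $(x_1-x_2)(x_3-x_4)(P)\neq 0$, after which neither (1) nor (3) applies pointwise.

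In fact the implication you assert is false: conditions (1) and (2) do not imply (i). Take $h'=e_1$, $h=e_3-e_1e_2$, $f_1=e_2+e_1^2$, $f_2=e_4$ (so $a=5$, $c=2$, $d=4$). Then (1) holds because the ideal $(h,h',f_1,f_2)$ contains $(e_1,e_2,e_3,e_4)$, and (2) holds because $V(q_1,q_2)$ is the union of the four planes of type $\{(u,u,u,v)\}$, on which $f_1$ and $f_2$ restrict to $12u^2+9uv+v^2$ and $u^3v$, with only the trivial common zero. Yet $h_1,h_2,f_1,f_2$ is not regular: the point $P=(p_1,0,0,1)$ with $p_1^2+3p_1+1=0$ satisfies $f_1(P)=f_2(P)=h_1(P)=h_2(P)=0$. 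So any correct proof must genuinely use condition (3), and the missing idea is the $\mathfrak{S}_4$-symmetry, which is how the paper proceeds: since $h,h',f_1,f_2$ are symmetric, applying the transposition $(2\,3)$ to (3) yields (iv), and applying $(3\,4)$ (which fixes $h_1$) to (iv) yields regularity of $h_1,(x_1-x_4)(x_2-x_3),f_1,f_2$; combining this with (1) via the ideal identities $(h,h',f_1,f_2)=(h_1,h',f_1,f_2)$ and $(h_1,h_2-h_1,f_1,f_2)=(h_1,h_2,f_1,f_2)$, and Lemma \ref{lem:reg_seq_factors} applied to the product $h'(x_1-x_4)(x_2-x_3)=h_2-h_1$, gives (i). Without invoking this symmetry your reduction does not go through.
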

\begin{proof}
  By Lemma \ref{lem:reg_seq_factors}, $g_1,g_2,f_1,f_2$ is regular if
  and only if
  \begin{enumerate}
    \renewcommand\labelenumi{(\roman{enumi})}
  \item $h_1,h_2,f_1,f_2$
  \item $(x_1-x_2)(x_3-x_4),(x_1-x_3)(x_2-x_4),f_1,f_2$
  \item $(x_1-x_2)(x_3-x_4),h_2,f_1,f_2$
  \item $h_1,(x_1-x_3)(x_2-x_4),f_1,f_2$
  \end{enumerate}
  are regular. Note that (ii) and (iii) are the same as (2) and (3)
  above. Moreover, the transposition $(2\,3)\in\mathfrak{S}_4$
  permutes (iii) and (iv), therefore it is enough to assume (iii) is
  regular. Thus the statement of the lemma will follow if we can prove
  that (1), (2), and (3) are regular if and only if (i), (ii), and
  (iii) are regular.

  Let us show that if (i) is regular then (1) is regular.  Since we
  have an equality of ideals
  $(h_1,h_2,f_1,f_2) = (h_1,h_2-h_1,f_1,f_2)$ and (i) is regular,
  $h_1,h_2-h_1,f_1,f_2$ is also regular.  Notice that
  \begin{equation}
    \label{eq:4}
    h_2 - h_1 = h' (x_1-x_4)(x_2-x_3).
  \end{equation}
  This implies that $h_1,h',f_1,f_2$ is regular. We deduce that (1) is
  regular, because of the equality
  $(h_1,h',f_1,f_2)=(h,h',f_1,f_2)$.

  Now assume that (1) and (3) are regular and let us prove that (i) is
  regular. Since (1) is regular, the equality
  $(h,h',f_1,f_2)=(h_1,h',f_1,f_2)$ implies that $h_1,h',f_1,f_2$ is
  regular. As previously observed, (3) being regular implies (iii) and
  (iv) are regular. Note that $(3\,4) h_1 = h_1$. Therefore, applying
  $(3\,4)$ to (iv), we obtain the regular sequence
  $h_1,(x_1-x_4)(x_2-x_3),f_1,f_2$. Since both $h_1,h',f_1,f_2$ and
  $h_1,(x_1-x_4)(x_2-x_3),f_1,f_2$ are regular, we can multiply their
  second elements to obtain a new regular sequence. By equation
  \eqref{eq:4}, this sequence is simply $h_1,h_2-h_1,f_1,f_2$. Finally
  the ideal equality $(h_1,h_2-h_1,f_1,f_2) = (h_1,h_2,f_1,f_2)$
  allows us to conclude that (i) is regular.
\end{proof}

By Lemma \ref{reg seq conditions}, a necessary condition for
$g_1,g_2,f_1,f_2$ to be a homogeneous regular sequence of type
$S^{(2,2)} \oplus S^{(4)} \oplus S^{(4)}$ and degrees $(a,a,c,d)$ is
that $(a-2,a-4,c,d)$ is a regular degree sequence.  In fact, we will
show this condition is also sufficient when $a \geq 5$.

\begin{prop}
  \label{pro:a_geq_5}
  Let $a \geq 5$.  Suppose that $(a-2,a-4,c,d)$ is a regular degree
  sequence for $n=4$.  Then there exists a homogeneous regular
  sequence of type $S^{(2,2)} \oplus S^{(4)} \oplus S^{(4)}$ and
  degrees $(a,a,c,d)$.
\end{prop}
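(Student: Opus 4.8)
The plan is to build the desired regular sequence $g_1,g_2,f_1,f_2$ by choosing the ``ingredients'' $h$, $h'$, $f_1$, $f_2$ so that all three conditions of Lemma~\ref{reg seq conditions} are satisfied. Since we are told $(a-2,a-4,c,d)$ is a regular degree sequence for $n=4$, there exist symmetric polynomials $h$ of degree $a-2$, $h'$ of degree $a-4$, and $f_1,f_2$ of degrees $c,d$ forming a regular sequence in $R^{\mathfrak{S}_4}$; this immediately gives condition (1). The real work is arranging conditions (2) and (3) simultaneously, and the freedom we have is that a regular degree sequence only pins down the \emph{degrees}, not the polynomials, so we may replace $h,h',f_1,f_2$ by any other regular sequence of the same degrees.

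First I would dispose of condition~(2): $(x_1-x_2)(x_3-x_4),(x_1-x_3)(x_2-x_4),f_1,f_2$ must be regular. The first two polynomials span the degree-2 copy of $S^{(2,2)}$, and the variety they cut out is easy to describe: modulo these two quadrics, one checks that the only zeros (in $\mathbb{P}^3$ or $\mathbb{A}^4$) lie on the union of linear subspaces where the four coordinates split into two equal pairs, e.g. $x_1=x_2,\ x_3=x_4$ together with its $\mathfrak{S}_4$-translates, plus the diagonal. On each such component the symmetric functions $e_1,e_2,e_3,e_4$ restrict to the elementary symmetric functions of fewer independent variables, so the condition that $f_1,f_2$ extend $(x_1-x_2)(x_3-x_4),(x_1-x_3)(x_2-x_4)$ to a regular sequence reduces to: $f_1,f_2$ (viewed in the appropriate two-variable polynomial ring $\CC[e_1,e_2]$ after the identification) have no common zero away from the origin. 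I expect that the standard choices coming from a regular degree sequence for $n=4$ — for instance the explicit forms $e_1^{d_1},\ e_2^p e_3^q$, etc. appearing in Table~\ref{tab:symmetric} — already work, or can be perturbed to work; the point is that condition~(\dag) for $(a-2,a-4,c,d)$ forces at most two of the degrees $a-2,a-4,c,d$ to be ``small,'' which is exactly what is needed for the pair $f_1,f_2$ to cut out a finite set on each two-dimensional component.

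Next, condition~(3): $(x_1-x_2)(x_3-x_4),h_2,f_1,f_2$ regular, where $h_2 = h + h'(x_1+x_3)(x_2+x_4)$. Here is where the extra parameter $h'$ (available because $a\geq 5$, so $\deg h' = a-4\geq 1$) is essential. On the component $x_1=x_2,\ x_3=x_4$ we have $(x_1-x_2)(x_3-x_4)=0$ automatically, and $h_2$ restricts to $h + h'(x_1+x_3)^2$ in the two variables $x_1,x_3$; since $h,h'$ are independent symmetric polynomials of the right degrees, one can choose them so that $h_2$ does not vanish identically on this component and in fact cuts it down to a curve on which $f_1,f_2$ have no common zero. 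The key structural input is that $h'$ enters $h_2$ but $h_2-h_1 = h'(x_1-x_4)(x_2-x_3)$ governs the difference, so tuning $h'$ moves $h_2$ transversally to the bad loci without disturbing condition~(1) (which only sees $h,h',f_1,f_2$ up to regularity, preserved under generic choices). I would make this precise by a dimension count: the ``bad'' set of choices of $(h,h',f_1,f_2)$ of fixed degrees for which any one of (1),(2),(3) fails is a proper closed subvariety of the affine space of such tuples, provided it is nonempty for \emph{some} choice — and nonemptiness of a good choice is guaranteed for (1) by hypothesis, while for (2) and (3) I would exhibit one explicit working tuple, likely reusing the Table~\ref{tab:symmetric} constructions with $h' = e_2^{(a-4)/?}\cdots$ adjusted by parity.

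The main obstacle I anticipate is the bookkeeping in condition~(3): unlike (2), the polynomial $h_2$ is not symmetric and mixes the parameters $h$ and $h'$, so one must check that on \emph{each} of the relevant codimension-one components of $V((x_1-x_2)(x_3-x_4))$ the restricted sequence stays regular, and that the choices forced by different components are compatible. I would handle this by reducing, as in the proof of Theorem~\ref{srs}(\ref{n is 4}), to a small finite list of degree patterns for $(a-2,a-4,c,d)$ (using $(*)$ and $(\dag)$), and in each pattern writing down $h,h',f_1,f_2$ explicitly in terms of $e_1,e_2,e_3,e_4$ and verifying regularity by the same elementary substitutions used throughout Section~2 — possibly invoking Corollary~\ref{improved cor} to supply the symmetric part $e_1,f_1,f_2$ together with a suitable fourth symmetric polynomial that we then re-interpret as controlling $h$ or $h'$. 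The remaining cases, if any, should be checkable by Macaulay2 as in Remark~\ref{rem:M2_rem}.
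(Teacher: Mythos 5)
Your overall framing is the same as the paper's: reduce everything to the three conditions of Lemma~\ref{reg seq conditions}, note that the hypothesis that $(a-2,a-4,c,d)$ is a regular degree sequence supplies condition (1) for \emph{some} choice of $h,h',f_1,f_2$, and then use the freedom in choosing these polynomials to secure (2) and (3) as well. But the substance of the paper's proof is precisely the part you defer: for conditions (2) and (3) there is no appeal to genericity; instead the degree constraints coming from $(*)$ and $(\dag)$ are broken into an explicit case analysis (six patterns for $a$ even, six for $a$ odd, Tables~\ref{tab:even} and \ref{tab:odd}), and in each case a concrete tuple $h,h',f_1,f_2$ in the $e_i$'s is written down and verified to satisfy all three conditions via Lemma~\ref{lem:reg_seq_factors} and \cite[Cor.~17.8 a]{Eisenbud} (Remark~\ref{rem:reg_seq_proof}). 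Your proposal replaces this with ``I expect the standard choices work,'' ``I would exhibit one explicit working tuple,'' and ``the remaining cases should be checkable by Macaulay2.'' The genericity skeleton itself could be made to work (the good locus for each of (1)--(3) is Zariski open in the affine space of tuples of the given degrees, and nonempty opens over $\CC$ intersect), but its indispensable input is the nonemptiness of the good locus for (2) and for (3), and establishing that is exactly the explicit construction you have not carried out; so the proof is incomplete at its core.

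There are also concrete errors in the geometric reductions you sketch. For condition (2), the common zero locus of $(x_1-x_2)(x_3-x_4)$ and $(x_1-x_3)(x_2-x_4)$ is \emph{not} the union of the ``pair-splitting'' planes such as $\{x_1=x_2,\ x_3=x_4\}$ (on that plane the second quadric restricts to $(x_1-x_3)^2\not\equiv 0$); it is the union of the four planes on which three of the coordinates coincide, e.g.\ $\{x_1=x_2=x_3\}$, so the two-variable reduction you describe is not the right one. For condition (3), the variety cut out by $(x_1-x_2)(x_3-x_4)$ is the union of the two hyperplanes $\{x_1=x_2\}$ and $\{x_3=x_4\}$, each of dimension 3; you only discuss the $2$-dimensional locus $\{x_1=x_2,\ x_3=x_4\}$, which is not a component, so your check that ``$h_2$ cuts it down to a curve'' does not address what must actually be shown, namely that $h_2,f_1,f_2$ restrict to a regular sequence on each of the two hyperplanes. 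Fixing both points still leaves you needing the explicit choices (with the parity and divisibility bookkeeping forced by $(*)$ and $(\dag)$, e.g.\ $8\mid cd$ when $a$ is odd) that constitute the paper's Tables~\ref{tab:even} and~\ref{tab:odd}.
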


\begin{proof}
  First we suppose that $a$ is even.  By Proposition \ref{beta
    condition}, $4! \mid (a-2)(a-4)cd$. Note that both $a-2$ and $a-4$
  are even, and at least one of them is divisible by 4. Therefore it
  is enough to account for 3 dividing $(a-2)(a-4)cd$. Moreover, we can
  assume $c\geq 2$ by condition $(\dag)$; in particular, we can write
  $c=2p+3q$ for some positive integers $p,q$. Table \ref{tab:even}
  contains our choices of polynomials $h,h',f_1,f_2$; one can easily
  verify that, in each case, $h,h',f_1,f_2$ is a regular sequence (see
  Remark \ref{rem:reg_seq_proof}).  The polynomials $g_1,g_2$ are
  obtained using equation \eqref{eq:3}. Using Lemma~\ref{reg seq
    conditions}, we conclude that, in each case, $g_1,g_2,f_1,f_2$ is
  a regular sequence of type
  $S^{(2,2)} \oplus S^{(4)} \oplus S^{(4)}$.

\begin{table}[htb]
  \centering
  $\begin{array}{|l|l|l|l|l|l|l|l|}
     \hline
     \multicolumn{4}{|c|}{\rm \bf Degrees} & \multicolumn{4}{|c|}{\rm\bf Symmetric\ Polynomials}\\ \hline
     \multicolumn{1}{|c|}{a-2} & \multicolumn{1}{|c|}{a-4} & \multicolumn{1}{|c|}{c} & \multicolumn{1}{|c|}{d}
                                                                                     & \multicolumn{1}{|c|}{h} & \multicolumn{1}{|c|}{h'} & \multicolumn{1}{|c|}{f_1} & \multicolumn{1}{|c|}{f_2}\\ \hline
     4\alpha+2 & 4\alpha & 3\gamma & d & e_2^{2\alpha+1} & e_4^\alpha & e_3^\gamma & e_1^d\\
     4\alpha & 4\alpha-2 & 3\gamma & d & e_4^\alpha & e_2^{2\alpha-1} & e_3^\gamma & e_1^d\\
     12\alpha+2 & 12\alpha & 2p+3q  & d & (e_2^3+e_3^2)e_4^{3\alpha-1} & (e_2^6+e_3^4+e_4^3)^\alpha & e_2^p e_3^q & e_1^d\\
     12\alpha & 12\alpha-2 & 2p+3q  & d & (e_2^6+e_3^4+e_4^3)^\alpha & (e_2^3+e_3^2)e_4^{3\alpha-2} & e_2^p e_3^q & e_1^d\\
     6\alpha,\ (2\nmid \alpha) & 6\alpha-2 & 2p+3q  & d & (e_2^3+e_3^2)^\alpha & e_4^{(3\alpha-1)/2} & e_2^p e_3^q & e_1^d\\
     6\alpha+2,\ (2\nmid \alpha) & 6\alpha & 2p+3q  & d & e_4^{(3\alpha+1)/2} & (e_2^3+e_3^2)^\alpha & e_2^p e_3^q & e_1^d\\
     \hline
   \end{array}$
   \bigskip
   \caption{$a$ even}\label{tab:even}
 \end{table}

 Next suppose that $a$ is odd. We must have $8\mid cd$ so, without
 loss of generality, we may assume that $2\mid c$ and $4\mid
 d$. Furthermore, $3 \mid (a-2)(a-4)cd$. We outline our choices of
 $h,h',f_1,f_2$ in Table \ref{tab:odd}. As before, one can verify that
 the corresponding sequence $g_1,g_2,f_1,f_2$ is regular.

 \begin{table}[htb]\centering
   $\begin{array}{|l|l|l|l|l|l|l|l|}
      \hline
      \multicolumn{4}{|c|}{\rm \bf Degrees} & \multicolumn{4}{|c|}{\rm\bf Symmetric\ Polynomials}\\
      \hline
      \multicolumn{1}{|c|}{a-2} & \multicolumn{1}{|c|}{a-4} & \multicolumn{1}{|c|}{c} & \multicolumn{1}{|c|}{d} 
                                                                                      & \multicolumn{1}{|c|}{h} & \multicolumn{1}{|c|}{h'} & \multicolumn{1}{|c|}{f_1} & \multicolumn{1}{|c|}{f_2}\\ \hline
      3\alpha &3\alpha-2 & 2\gamma & 4\delta & e_3^\alpha & e_2^{(3\alpha-3)/2} e_1 & (e_1^2 + e_2)^\gamma & e_4^\delta\\
      3\alpha+2 &3\alpha & 2\gamma & 4\delta & e_2^{(3\alpha -1)/2} e_1 & e_3^\alpha & (e_1^2 + e_2)^\gamma & e_4^\delta\\
      4\alpha-1 &4\alpha-3 & 6\gamma & 4\delta & e_2^{2\alpha-2} e_3 & e_4^{\alpha-1}e_1 & (e_2^3 + e_3^2)^{\gamma} & (e_1^4+e_4)^{\delta}\\
      4\alpha-1 &4\alpha-3 & 2\gamma & 12\delta & e_4^{\alpha-1} e_3 & e_2^{2\alpha-2}e_1 & (e_1^2 + e_2)^{\gamma} & (e_3^4+e_4^3)^{\delta}\\
      4\alpha+1 &4\alpha-1 & 6\gamma & 4\delta & e_4^\alpha e_1 & e_2^{2\alpha-2}e_3 & (e_2^3 + e_3^2)^{\gamma} & (e_1^4+e_4)^{\delta}\\
      4\alpha+1 &4\alpha-1 & 2\gamma & 12\delta & e_2^{2\alpha}e_1 & e_4^{\alpha-1}e_3 & (e_1^2 + e_2)^{\gamma} & (e_3^4+e_4^3)^{\delta}\\
      \hline
    \end{array}$\\
    \bigskip
    \caption{$a$ odd}
    \label{tab:odd}
  \end{table}
\end{proof}

\begin{remark}\label{rem:reg_seq_proof}
  For each line in Table \ref{tab:even} and Table \ref{tab:odd}, one
  can prove that the polynomials $h,h',f_1,f_2$ form a regular
  sequence using Lemma \ref{lem:reg_seq_factors} and \cite[Cor.~17.8
  a]{Eisenbud}. As an example, we show that the polynomials in the
  third line of Table \ref{tab:even}, specifically
  \begin{equation*}
    (e_2^3+e_3^2)e_4^{3\alpha-1}, \quad (e_2^6+e_3^4+e_4^3)^\alpha,
    \quad e_2^p e_3^q, \quad e_1^d,
  \end{equation*}
  form a regular sequence.

  By Lemma \ref{lem:reg_seq_factors} and \cite[Cor.~17.8 a]{Eisenbud},
  it is enough to show that the sequences
  \begin{itemize}
  \item $e_2^3+e_3^2, e_2^6+e_3^4+e_4^3, e_2, e_1$
  \item $e_2^3+e_3^2, e_2^6+e_3^4+e_4^3, e_3, e_1$
  \item $e_4, e_2^6+e_3^4+e_4^3, e_2, e_1$
  \item $e_4, e_2^6+e_3^4+e_4^3, e_3, e_1$
  \end{itemize}
  are regular.

  Let us show the first sequence is regular. The ideal it generates is
  equal to $e_3^2,e_3^4+e_4^3, e_2, e_1$, therefore it suffices to
  show that these generators form a regular sequence. Using
  \cite[Cor.~17.8 a]{Eisenbud} again, it is enough to prove that
  $e_3,e_3^4+e_4^3, e_2, e_1$ is regular. Because of the ideal
  equality $(e_3,e_3^4+e_4^3, e_2, e_1)=(e_3,e_4^3, e_2, e_1)$, we
  only need to prove that $e_3,e_4^3, e_2, e_1$ is regular. This
  follows immediately from \cite[Cor.~17.8 a]{Eisenbud} and the fact
  that the elementary symmetric polynomials form a regular sequence.

  The other sequences are handled similarly.
\end{remark}

In summary, we have the following result.
 
\begin{theorem}
  There exists a regular sequence of type
  $S^{(2,2)} \oplus S^{(4)} \oplus S^{(4)}$ and degrees $(a,a,c,d)$ if
  and only if
  \begin{enumerate}
  \item $a = 2$ or $4$ and $(c,d) \neq (1,1)$, or
  \item $a\geq 5$ and $(a-2,a-4,c,d)$ is a regular degree sequence.
  \end{enumerate}
\end{theorem}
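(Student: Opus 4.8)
The plan is to assemble the theorem from the results already in hand, treating the ranges $a\leq 4$ and $a\geq 5$ separately.

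First I would dispose of the small cases. Since $g_1,g_2$ span a copy of $S^{(2,2)}$, which first occurs in degree $2$, we always have $a\geq 2$, and the case $a=3$ was shown to admit no regular sequence of this type. Thus whenever neither condition (1) nor condition (2) of the theorem holds there is no such regular sequence. For $a\in\{2,4\}$, the earlier proposition shows a regular sequence exists precisely when, after possibly interchanging $f_1$ and $f_2$, we have $\deg(f_2)\geq 2$; since $f_1$ and $f_2$ play symmetric roles and the degree tuple is unordered in $c,d$, this is exactly the statement that $c$ and $d$ are not both equal to $1$, i.e.\ $(c,d)\neq(1,1)$. This is condition (1).

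Next, for $a\geq 5$, the sufficiency of condition (2) is precisely Proposition~\ref{pro:a_geq_5}. For the converse, suppose $g_1,g_2,f_1,f_2$ is a homogeneous regular sequence of type $S^{(2,2)}\oplus S^{(4)}\oplus S^{(4)}$ with degrees $(a,a,c,d)$. By the structural description~\eqref{eq:3}, the polynomials $g_1,g_2$ are built from symmetric polynomials $h$ of degree $a-2$ and $h'$ of degree $a-4$; since $a\geq 5$, both $h$ and $h'$ are nonzero of positive degree, for otherwise $g_1$ and $g_2$ would share a common factor of positive degree and could not form a regular sequence. Lemma~\ref{reg seq conditions}~(1) then says that $h,h',f_1,f_2$ is a regular sequence. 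These four polynomials all lie in $R^{\mathfrak{S}_4}$, and since $R$ is a finite free module over $R^{\mathfrak{S}_4}$, a sequence of symmetric polynomials is regular in $R$ if and only if it is regular in $R^{\mathfrak{S}_4}$; hence $(a-2,a-4,c,d)$ is a regular degree sequence for $n=4$, which is condition (2). Combining the two ranges proves the theorem.

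Almost all of the content has been established in the preceding propositions, so the argument is largely bookkeeping. The one point that requires a little care — and is the only real obstacle — is the necessity direction for $a\geq 5$: one must confirm that Lemma~\ref{reg seq conditions}~(1) genuinely yields a regular sequence of symmetric polynomials of the exact degrees $(a-2,a-4,c,d)$ (in particular that $h$ and $h'$ are honest homogeneous polynomials of those positive degrees when $a\geq 5$), and that this is the same as $(a-2,a-4,c,d)$ being a regular degree sequence in the sense of the earlier definition.
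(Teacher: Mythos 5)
Your proposal is correct and follows essentially the same route as the paper: the $a\leq 4$ cases from the low-degree proposition, sufficiency for $a\geq 5$ from Proposition~\ref{pro:a_geq_5}, and necessity from the structural form \eqref{eq:3} together with Lemma~\ref{reg seq conditions}(1). Your added remark on transferring regularity of symmetric polynomials between $R$ and $R^{\mathfrak{S}_4}$ (via freeness) is a correct justification of a point the paper leaves implicit, not a different argument.
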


\section{Acknowledgements}
While we were working on the research for this article A.V.~(Tony)
Geramita became ill and was confined to hospital in Vancouver B.C.~in
late December 2015.  During this period we continued working, with Tony
contributing comments and suggestions by email as his health
permitted.  The proofs of the results were mostly completed by March
2016 and Tony sent his last comments on April 26.  Unfortunately Tony
passed away in Kingston on June 22, 2016.  He was a wonderful
mathematician, colleague and friend whom we miss greatly.

The authors gratefully acknowledge the partial support of NSERC for
this work.

\begin{appendix}
\section{Macaulay2 code}
\label{sec:macaulay2-code}

We present here the Macaulay2 code used to produce the example in
Remark \ref{rem:M2_rem}.

\begin{verbatim}
needsPackage "Depth"
R=QQ[x_1..x_5]
e=apply(5,i->sum(apply(subsets(gens R,i+1),product)))
l=apply(4,i->x_(i+1)^6-x_5^6)
g=apply(4,i->sum(apply(4,j->e_(j+1)*(x_(i+1)^(4-j)-x_5^(4-j)))))
isRegularSequence(l|{e_0})
isRegularSequence(g|{e_0})
\end{verbatim}
\end{appendix}

%%Proper spelling of Dragomir Djokovic's name
\def\Dbar{\leavevmode\lower.6ex\hbox to 0pt{\hskip-.23ex
    \accent"16\hss}D}

\end{document}